\newtheorem{theorem}{Theorem}[section]
\newtheorem{lemma}[theorem]{Lemma}
\newtheorem{corollary}[theorem]{Corollary}
\newtheorem{remark}[theorem]{Remark}
\theoremstyle{definition}
\numberwithin{equation}{section}
\newcommand{\ba}{\begin{array}}
\newcommand{\ea}{\end{array}}
\newcommand{\f}{\frac}
\newcommand{\la}{\lambda}
\newcommand{\R}{\mathbb{R}}
\newcommand{\N}{\mathbb{N}}
\begin{document}
\date{}

 \title{\bf
 Normalized solutions for nonhomogeneous  Chern-Simons-Schr\"odinger equations \\ with critical exponential growth
 \footnote{E-mail: {\tt clweimath@163.com} (C.L. Wei),  {\tt mathsitongchen@mail.csu.edu.cn} (S.T. Chen), {\tt zhouxinaomath@163.com} (X.A. Zhou)  .  }}
\author{ Chenlu Wei,\ Sitong Chen\footnote{Corresponding Author.},  \ Xin'ao Zhou\\
{\small \it School of Mathematics and Statistics, HNP-LAMA, Central South University,}\\
{\small \it  Changsha, Hunan 410083, P.R.China}}

\maketitle
\begin{abstract}
This paper investigates the existence of normalized solutions for the following Chern-Simons-Schr\"odinger equation:
\begin{equation*}
 \left\{
   \begin{array}{ll}
     -\Delta u+\lambda u+\left(\frac{h^{2}(\vert x\vert)}{\vert x\vert^{2}}+\int_{\vert x\vert}^{\infty}\frac{h(s)}{s}u^{2}(s)\mathrm{d}s\right)u
      =\left(e^{u^2}-1\right)u+g(x), & x\in \R^2, \\
   u\in H_r^1(\R^2),\  \int_{\R^2}u^2\mathrm{d}x=c, \\
   \end{array}
 \right.
 \end{equation*}
 where $c>0$, $\lambda\in \R$ acts as a Lagrange multiplier and   $g\in \mathcal {C}(\mathbb{R}^2,[0,+\infty))$ satisfies suitable assumptions.
In addition to  the loss  of compactness caused by the nonlinearity with critical exponential growth, the intricate interactions among it, the nonlocal term, and the nonhomogeneous term significantly affect the geometric structure of the constrained functional, thereby making this research particularly challenging.
By specifying  explicit conditions on $c$, we subtly
 establish a structure of local minima of the constrained functional.
Based on the structure, we employ new  analytical techniques to  prove the existence of two solutions: one being a
local minimizer and one of mountain-pass type.
Our results are entirely new, even for the Schr\"odinger equation that is when nonlocal terms are absent. We believe our methods may be adapted and modified to deal with more constrained problems with nonhomogeneous perturbation.

\noindent {\bf Keywords: }\ \   Nonhomogeneous Chern-Simons-Schr\"odinger equation;  Normalized solution;  Critical exponential growth; Trudinger-Moser inequality.

 \vskip2mm
 \noindent
 {\bf 2010 Mathematics Subject Classification.}\ \ 35J20, 35J62, 35Q55
\end{abstract}











{\section{Introduction}}
 \setcounter{equation}{0}
 In this paper, we study  the following Chern-Simons-Schr\"odinger equation with a nonhomogeneous perturbation:
\begin{equation}\label{Pa1}
 \left\{
   \begin{array}{ll}
     -\Delta u+\lambda u+\left(\frac{h^{2}(\vert x\vert)}{\vert x\vert^{2}}+\int_{\vert x\vert}^{\infty}\frac{h(s)}{s}u^{2}(s)\mathrm{d}s\right)u
      =\left(e^{u^2}-1\right)u+g(x), & x\in \R^2, \\
   u\in H_r^1(\R^2),\  \int_{\R^2}u^2\mathrm{d}x=c, \\
   \end{array}
 \right.
 \end{equation}
 where $h(s)=\int_{0}^{s}\frac{l}{2}u^{2}(l)\mathrm{d}l$,
$c>0$ is  a given real number, $\lambda\in \R$ is unknown and arises as a Lagrange multiplier which depends on the solution $u\in H_r^1(\R^2)$,
$g:\mathbb{R}^2 \rightarrow \mathbb{R}$ satisfies the following basic assumption:
\begin{itemize}
		\item[(G1)]  $g\in \mathcal {C}(\mathbb{R}^2,[0,+\infty)) \cap L^{\frac{4}{3}}(\mathbb{R}^2)$, $g(x)=g(|x|)$ and $g(x)\not\equiv 0$ for all $x=(x_1,x_2) \in \mathbb{R}^2$.
	\end{itemize}
As in \cite{AY,dMR1}, we say that $f$ enjoys {\it critical  exponential growth} (see Adimurthi -Yadava \cite{AY}, as well as de Figueiredo-Miyagaki-Ruf \cite{dMR1}) if
 $f\in \mathcal{C}(\R, \R)$ satisfies
 \begin{equation}\label{Cr1}
  \lim_{|t|\to +\infty}\frac{|f(t)|}{e^{\alpha t^2}}=
   \begin{cases}
     0, \;\;&  \mbox {for all } \alpha>\alpha_0,\\
     +\infty,&  \mbox {for all } \alpha<\alpha_0,
 \end{cases}\ \ \ \ \hbox{for some constant}\  \alpha_0>0,
 \end{equation}
which is motivated by  Trudinger-Moser inequality developed by Cao in \cite{Cao},  see Lemma \ref{lem 2.3}  for more details. Thus, $ \left(e^{u^2}-1\right)u $ admits critical  exponential growth. Here, $H^1_r(\R^2)$ denotes the set of radially symmetric functions in  $H^1(\R^2)$.
\par
The  solutions of \eqref{Pa1} can be obtained as critical points of the energy functional $\Phi:H^1_r(\R^2)\rightarrow \R$ defined by
\begin{equation}\label{fai}
		\Phi(u):= \frac{1}{2}\int_{\R^2}\left[|\nabla u|^2+\frac{u^{2}}{\vert x\vert^{2}}\left(\int_{0}^{\vert x\vert}\frac{s}{2}u^{2}(s)\mathrm{d}s\right)^{2}\right]\mathrm{d}x
		-\frac{1}{2}\int_{\R^2}\left(e^{u^2}-1-u^2\right)\mathrm{d}x
-\int_{\R^2}g(x)u\mathrm{d}x
	\end{equation}
under the constraint
	\begin{equation}\label{Sa}
		\mathcal{S}_c:=\left\{u\in H^1_{r}(\R^2):\|u\|_2^2=c>0\right\}.
	\end{equation}
Moreover,   we know  that $\Phi \in \mathcal{C}^1(H^1_r(\R^2),\R)$, see Section 2 for more details.
\par
The first equation of \eqref{Pa1} is derived from  the study of standing waves for the following time dependent Chern-Simons-Schr\"odinger system  with the gauge field in $\R^2$:
 \begin{align}\label{css1}
 \left\{
\begin{aligned}
&iD_{0}\phi+(D_{1}D_{1}+D_{2}D_{2})\phi=f(\phi),\\
&\partial_{0}A_{1}-\partial_{1}A_{0}=-\text{Im}(\bar{\phi}D_{2}\phi),\\
&\partial_{0}A_{2}-\partial_{2}A_{0}=\text{Im}(\bar{\phi}D_{1}\phi),\\
&\partial_{1}A_{2}-\partial_{2}A_{1}=-1/2\vert \phi\vert^{2},
\end{aligned}
\right.
\end{align}
where $i$ denotes the imaginary unit, $\partial_{0}=\frac{\partial}{\partial t}$, $\partial_{1}=\frac{\partial}{\partial x_{1}}$, $\partial_{2}=\frac{\partial}{\partial x_{2}}$ for $(t, x_{1}, x_{2})\in \mathbb{R}\times \mathbb{R}^{2} $, $\phi:\mathbb{R}\times \mathbb{R}^{2}\rightarrow \mathbb{C}$ is the complex
scalar field, $A_{j}:\mathbb{R}\times \mathbb{R}^{2}\rightarrow \mathbb{R}$ is the gauge field, $D_{j}=\partial_{j}+iA_{j}$ is the covariant derivative for $j=0, 1, 2$ and the function $f$ denotes the nonlinearity.
In particular, we consider standing wave solutions of the following form
\begin{align}\label{SW}
		&\phi(t, x)=u(\vert x\vert)e^{i\lambda t}, \quad\quad\quad A_{0}(x, t)=k(\vert x\vert),\nonumber\\
		&A_{1}(x, t)=\frac{x_{2}}{\vert x\vert}h(\vert x\vert),\quad\quad\quad A_{2}(x, t)=-\frac{x_{1}}{\vert x\vert}h(\vert x\vert),
\end{align}
where $\lambda>0$ denotes a frequency and $u$, $k$ and $h$ represent real-valued functions on the interval $[0, +\infty)$ with the condition that $h(0)=0$.
Furthermore, if  the Coulomb gauge condition  $\partial_{1}A_{1}+\partial_{2}A_{2}=0$ is satisfied, one can easily conclude that the following non-local equation:
	\begin{equation}\label{PG1}
		-\Delta u+\lambda u+\left(\xi+\int_{\vert x\vert}^{\infty}\frac{h(s)}{s}u^{2}(s)ds\right)u
		+\frac{h^{2}(\vert x\vert)}{\vert x\vert^{2}}u
		=f(x,u),  \ \  x\in \R^2,
	\end{equation}
	where $h(s)=\int_{0}^{s}\frac{l}{2}u^{2}(l)\mathrm{d}l$, $\xi\in \R$ is a constant and $A_0=\xi+\int_{\vert x\vert}^{\infty}\frac{h(s)}{s}u^{2}(s)ds$.
This is a nonrelativistic quantum model which can be used to describe the behavior
 for a large number of particles in Chern-Simons gauge fields.
 The characteristics of this model play a crucial role in studying fractional quantum Hall effect, high-temperature supconductors, as well as Aharovnov-Bohm scattering phenomena, see \cite{Ja1,Ja2} for more physical information.
 \par	
	When studying equation \eqref{PG1},
	there exist two distinct options regarding the frequency $\lambda$,
	leading to two different research fields. Without loss of generality, we set $\xi=0$ in \eqref{PG1} following previous works such as \cite{BHS}.
	If the frequency $\lambda$ is fixed,
any solutions of  \eqref{PG1} can be viewed as critical points of the energy functional defined by:
\begin{align*}
		\Phi_{\lambda}(u)
		= \frac{1}{2}\int_{\R^2}\left[|\nabla u|^2+\lambda u^2+\frac{u^{2}}{\vert x\vert^{2}}\left(\int_{0}^{\vert x\vert}\frac{s}{2}u^{2}(s)\right)^{2}\right]\mathrm{d}x
		-\int_{\R^2}F(x,u)\mathrm{d}x, \ \ \ \ \forall\ u\in H^1_r(\R^2)
	\end{align*}
	with $F(u)=\int_0^uf(t)\mathrm{d}t$.
After the pioneering work  of  Byeon-Huh-Seok \cite{BHS} in this direction, equation \eqref{PG1} began to receive a great deal of  attention.
Especially, when considering
 equation \eqref{PG1} with nonhomogeneous perturbation, we are only aware of the papers \cite{JF-JMAA,Zhs}.
To be precise, Yuan-Weng-Zhou \cite{Zhs} obtained the existence and multiplicity of solutions for \eqref{PG1}  when $f(x,u)=|u|^{p-2}u+g(x)$ with $p>2$;
Ji-Fang \cite{JF-JMAA} demonstrated that equation \eqref{PG1} with small perturbations and critical exponential growth has at least two positive solutions.
For more researches for \eqref{PG1} with general nonlinearities, we refer the readers to \cite{CTY,dP1,DPS, HHS,Huh,HHJ,LLi,LOZ,PR2,PR1}.
\par
	Alternatively, it is of great interest to investigate solutions to \eqref{PG1} with   $\lambda\in \R$ as an unknown parameter.
In this situation, the corresponding solutions of \eqref{PG1} possess a
	prescribed $L^2$-norm and $\lambda\in \R$ acts as a Lagrange multiplier with respect to the constraint  $\mathcal{S}_c$.
Specifically, for any given  $c>0$, our purpose is to investigate solutions to  \eqref{PG1} under the $L^2$-norm constraint $\mathcal{S}_c$.
	Such solutions are usually referred to as   \emph{normalized solutions} for \eqref{PG1},
	which correspond formally to critical points of the functional $\Phi: H^1_r(\R^2) \to \R$ defined by
	\begin{align*}
		\Phi_{0}(u)
		= \frac{1}{2}\int_{\R^2}\left[|\nabla u|^2+\frac{u^{2}}{\vert x\vert^{2}}\left(\int_{0}^{\vert x\vert}\frac{s}{2}u^{2}(s)\mathrm{d}s\right)^{2}\right]\mathrm{d}x
		-\int_{\R^2}F(x,u)\mathrm{d}x
	\end{align*}
	restricted  on $\mathcal{S}_c$. From a physics standpoint, this trouble holds significant meaning,
since it  not only conserves  the $L^2$-norm of the solution over time when solving the Cauchy problem
for \eqref{PG1},
but also  offers valuable insights into the dynamical properties, such as orbital stability and instability of solutions for  \eqref{PG1}, see \cite{BC,CL1} and references therein.
\par
During the past decades, the existence of normalized solutions to equation \eqref{PG1} has been investigated  increasingly.
In particular, in the study of \eqref{PG1},
$L^2$-critical exponent ($p^*=2+\frac{4}{N}$) plays an important role in determining the existence of a lower bound for the energy functional $ \Phi_{0}(u)$  on   $\mathcal{S}_c$, as discussed in \cite{So-JDE}.
Specifically,  if $f(u)$ in \eqref{PG1} grows slower than $|u|^{p^*-2}u$ at infinity,
$ \Phi_{0}(u)$ is always bounded from below on $\mathcal{S}_c$  and corresponding normalized solutions can be obtained by  exploring a global minimum; this is referred to as an $L^2$-subcritical case.
Otherwise,
$ \Phi_{0}(u)$ is always unbounded from below on $\mathcal{S}_c$; this is referred to as an $L^2$-supcritical case and it seems impossible to
derive a global minimum.
For the results concerning equation \eqref{PG1} with power nonlinearity $f(u)=|u|^{p-2}u$,
 we  refer readers to  \cite{BHS,CX-AASFM,gtx,LLuo,Yuan} for further details.
 Moreover, equation \eqref{PG1} with combined nonlinearities
 $ f(u)=\mu|u|^{p-2}u+|u|^{q-2}u\ \hbox{where}\  2<p,q<+\infty$  has  also been investigated,
see \cite{YCS} for example. More precisely,  Yao-Chen-Sun \cite{YCS}  obtained   two  normalized  solutions $\bar{u}_c^{\pm}$ which satisfy $\Phi_0(\bar{u}_c^+)<0<\Phi_0(\bar{u}_c^-)$ when $2<p<4<q$  and $\mu=1$.
\par
Furthermore, remarkable results for the following Chern-Simons-Schr\"odinger system with critical exponential growth have emerged
 in recent years,
 \begin{align}\label{cssf}
 \left\{
\begin{aligned}
&-\Delta u+\lambda u+A_0u+A_1^2u+A_2^2u
     =f(u), & x\in \R^2, \\
&\partial_{1}A_{0}=A_2u^2, \ \partial_{2}A_{0}=-A_1u^2, \\
&\partial_{1}A_{2}-\partial_{2}A_{1}=-1/2|u|^2, \ \partial_{1}A_{1}+\partial_{2}A_{2}=0,\\
&\int_{\R^2}u^2\mathrm{d}x=c.
\end{aligned}
\right.
\end{align}
Yao-Chen-Sun \cite{YCS-JGA}  obtained the existence of normalized  solutions to equation  \eqref{cssf} where $f$ satisfies \eqref{Cr1} with $\alpha_0=4\pi$,
	and $|x|^2u$ is added to \eqref{Pa1}.
 When $f(u)$ is replaced by $a(|x|)f(u)$, Yuan-Tang-Chen \cite{Yuan1} overcame the additional difficulties arising from
the non-constant potential $a(|x|)$ and  establish the existence of normalized solutions to system \eqref{cssf};
Gao-Tan \cite{GaoTan} and Huang-Feng-Chen \cite{jmaaH} further investigated normalized solutions of system \eqref{cssf} by weakening  the conditions in \cite{YCS-JGA,Yuan1} and Huang-Feng-Chen \cite{jmaaH} additionally  established the existence of a ground state solution to  \eqref{cssf} in $H^1(\R^2)$. Recently, Wei-Wen \cite{wei} considered the  system \eqref{cssf} with combined nonlinearities of critical exponent and an $L^2$-critical perturbation and prove the existence of normalized solutions by establishing the specific value range of $c$.
\par
Inspired by these works, especially \cite{JF-JMAA,Zhs} which considered Chern-Simons-Schr\"odinger equation \eqref{PG1}  with nonhomogeneous perturbation,  a natural question arises in the context of $L^2$-constraint:
{\bf{
\begin{itemize}
\item[(Q)] What will happen if the nonlinearity in \eqref{PG1} is $h(u)+g(x)$,
				especially where $h(u)$ is of critical exponential growth?
\end{itemize}
	}}
\par
It appears that the study of normalized solutions  for the nonhomogeneous Chern-Simons-Schr\"odinger equations
with critical exponential growth  has not been explored  in the existing literature.
As will be seen,
with the absence of gauge fields, equation \eqref{PG1}   reduces to
the following planar scenario  ($N=2$) of the Schr\"odinger equation:
\begin{equation}\label{PS}
		\left\{
		\begin{array}{ll}
			-\Delta u+\lambda u=f(x,u), & x\in \R^N, \ N\ge 2, \\
			\int_{\R^N}u^2\mathrm{d}x=c>0. \\
		\end{array}
		\right.
	\end{equation}
The first contribution  for equation \eqref{PS}   with nonhomogeneous perturbation  was recently made by
Chen-Zou \cite{ZWM}, which primarily  focused on cases where $f(x,u)=|u|^{p-2}+g(x)$.
Specifically, Chen-Zou obtained a global minimizer for $2<p<2+\frac{4}{N}$ and established   the existence of a mountain pass solution for $2+\frac{4}{N}<p<2^*:=\left\{\small
	\begin{array}{ll}
		2N/(N-2),  & N\ge 3,\\
		+\infty,& N=1,2.\\
	\end{array}
	\right.$
It  is not difficult to find that
the type of normalized solutions strictly depends on the order $p$ of power function.
Particuarly, this related result further motivates our research.

\par
In this paper, the main goal is to give an definitive answer to this question and investigate the existence of normalized solutions to \eqref{Pa1}.
In  contrast to previous works, especially system \eqref{cssf}  with  critical exponential growth
and \cite{ZWM} investigating equation \eqref{PS}  with nonhomogeneous perturbation,  additional difficulties arise in the study of
\eqref{Pa1}, which can be summarized as follows.
On one hand,  compared with the system \eqref{cssf},
the presence of perturbation $g$  strongly affects
 the geometric structure of the constrained functional.
Specifically, even if
a solution of mountain-pass type
can be identified, the method of constructing this solution is fundamentally different from that of \cite{GaoTan,jmaaH}.
On the other hand, unlike the research conducted in \cite{ZWM}  considering $L^2$-subcritical case and $L^2$-supercritical case,  the critical exponential
growth case of equation \eqref{Pa1} poses extra challenges in terms of the compactness analysis.
Additionally, the interplay between the added non-local term
and the nonhomogeneous nonlinearity  in \eqref{Pa1} is more delicate,  which profoundly influences the selection of approaches for seeking constrained critical points.
These challenges enforce the implementation of new ideas and mathematical methods  (see Remark \ref{rem 1.2})  to capture normalized
solutions of \eqref{Pa1}.
Finally,  we prove
the existence of two solutions, one being a local minimizer with negative energy and one of mountain-pass type with positive energy.
\par
To better illustrate our methods, we present a concrete  model with critical exponential growth of $h(u)$,  namely, $f(x,u)=  \left(e^{u^2}-1\right)u+g(x)$.
Before stating our conclusion, we initially introduce our hypothesis about $g$, which satisfies the following assumptions in addition to {\rm(G1)}:

\begin{itemize}
		\item[(G2)] $\nabla g(x)\cdot x \in  L^{\frac{4}{3}}(\mathbb{R}^2)$ and $2g(x)+\nabla g(x)\cdot x\ge 0$ for all $x \in \mathbb{R}^2$;
	\end{itemize}
\begin{itemize}
		\item[(G3)] $ t \shortmid \rightarrow g(tx)$ is nonincreasing on $(0,\infty)$ for every $x \in \R^2$;
	\end{itemize}
\begin{itemize}
		\item[(G4)] there exists a constant  $\theta \in (0,1)$ such that for every $x \in \R^2$,
$$g(x)-g(\theta x) \leq (1-\theta)\nabla g(x)\cdot x.$$
	\end{itemize}
 We define now the $L^2$-Pohozaev functional
\begin{align}\label{pa4}
		\mathcal{P}(u)
		& : = \|\nabla u\|_2^2+\int_{\R^2}\frac{u^{2}}{\vert x\vert^{2}}\left(\int_{0}^{\vert x\vert}\frac{s}{2}u^{2}(s)\mathrm{d}s\right)^{2}\mathrm{d}x
		-\int_{\R^2}\left[\left(u^2-1\right)e^{u^2}+1\right]\mathrm{d}x\nonumber\\
		&  \ \ \ \ +\int_{\R^2}\left[g(x)+\nabla g(x)\cdot x\right]u\mathrm{d}x , \  \ \forall\ u\in H^1_r(\R^2).
	\end{align}
As one will see in Lemma \eqref{Lep},  any solution to \eqref{Pa1}  satisfies the $L^2$-Pohozaev identity $\mathcal{P}(u)=0$.

 \par
 Our  results are as follows:

 \begin{theorem}\label{thm 1.1} Assume that {\rm(G1)}-{\rm(G2)} hold. Then there exist $c_0>0$
		and $s_0\in \left(0,\f{\pi}{3}\right]$ such that, for any $c\in (0,c_0)$, \eqref{Pa1}  has a  couple solution  $(\bar{u},\bar{\lambda}_c)\in \mathcal{S}_c \times \R$  with some $\bar{\lambda}_c>0$ and satisfying
			$$
			\Phi(\bar{u})=m(c):=\inf \left\{ \Phi(u):    u\in \mathcal{S}_c, \ \|\nabla u\|_2^2 < s_0\right\}.
			$$
	\end{theorem}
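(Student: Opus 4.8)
The strategy is to realize $\bar u$ as an honest local minimizer of $\Phi$ on $\mathcal S_c$ inside a small ball for the Dirichlet norm, and to extract the sign of the Lagrange multiplier from the $L^2$-Pohozaev identity \eqref{pa4}. \emph{Geometry.} First I would fix $s_0\in(0,\pi/3]$ small enough that Cao's Trudinger--Moser inequality (Lemma~\ref{lem 2.3}) controls $\int_{\R^2}(e^{\alpha u^2}-1)\,\mathrm{d}x$, for the finitely many exponents $\alpha$ that will occur, as soon as $\|\nabla u\|_2^2\le s_0$. Writing $\mathcal N(u):=\int_{\R^2}\frac{u^2}{|x|^2}\big(\int_0^{|x|}\frac s2u^2(s)\,\mathrm{d}s\big)^2\mathrm{d}x\ge 0$, the elementary bound $e^t-1-t\le\frac12 t^2e^t$, Hölder's inequality with $g\in L^{4/3}(\R^2)$, Cao's inequality, and the Gagliardo--Nirenberg inequality give, for $u\in\mathcal S_c$ with $t:=\|\nabla u\|_2^2\le s_0$ and $c$ small,
\[
\Phi(u)\ \ge\ \tfrac12 t-\tfrac12\!\int_{\R^2}\!\big(e^{u^2}-1-u^2\big)\mathrm{d}x-\!\int_{\R^2}\!g\,u\,\mathrm{d}x\ \ge\ \tfrac14 t-C\|g\|_{4/3}\,c^{1/4}t^{1/4}\ =:\ \zeta_c(t).
\]
Since $\zeta_c$ attains its minimum $-\tfrac34\big(C\|g\|_{4/3}c^{1/4}\big)^{4/3}$ at an interior point of $(0,s_0)$ and $\zeta_c(s_0)\to\frac{s_0}{4}>0$ as $c\to0$, there are $c_0>0$ and $\delta_0>0$ with $m(c)>-\infty$ and $\Phi\ge\delta_0$ on $\{u\in\mathcal S_c:\|\nabla u\|_2^2=s_0\}$ for $c\in(0,c_0)$; testing with $w_c=\sqrt c\,\phi$ (for $\phi\in C^\infty_c(\R^2)$ radial, $\phi\ge0$, $\|\phi\|_2=1$, $\int_{\R^2}g\phi\,\mathrm{d}x>0$) gives $\Phi(w_c)=-\sqrt c\int_{\R^2}g\phi\,\mathrm{d}x+O(c)<0$, so $m(c)<0<\delta_0$. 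This is the local-minimum structure.

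\emph{A Palais--Smale minimizing sequence.} Next, pick $u_n\in\mathcal S_c$ with $\|\nabla u_n\|_2^2<s_0$, $\Phi(u_n)\to m(c)$; replacing $u_n$ by $|u_n|$ does not increase $\Phi$ and keeps both constraints, so assume $u_n\ge0$, and by Ekeland's variational principle assume moreover $\Phi'(u_n)+\lambda_n u_n\to 0$ in $H^{-1}$. Then $(u_n)$ is bounded in $H^1_r(\R^2)$ and, testing with $u_n$, $(\lambda_n)$ is bounded; up to a subsequence $\lambda_n\to\bar\lambda$, $u_n\rightharpoonup\bar u$ in $H^1_r(\R^2)$ with $u_n\to\bar u$ in $L^p(\R^2)$ for all $p>2$ and a.e. If $\bar u=0$, then $\int_{\R^2}g u_n\,\mathrm{d}x\to0$ and $\int_{\R^2}(e^{u_n^2}-1-u_n^2)\,\mathrm{d}x\to0$ (because $\|u_n\|_8\to0$ while $\int_{\R^2}(e^{2u_n^2}-1)\,\mathrm{d}x$ stays bounded), forcing $m(c)\ge0$, which is impossible; hence $\bar u\ne0$. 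Passing to the limit in $\Phi'(u_n)+\lambda_n u_n\to0$ --- using the compact $L^p$-embeddings for the $g$- and $u^2(e^{u^2}-1)$-terms, the weak continuity on $H^1_r(\R^2)$ of the nonlocal operator established in Section~2, and Cao's inequality together with Vitali's theorem for the exponential nonlinearity --- shows that $\bar u$ solves the first equation of \eqref{Pa1} with the multiplier $\bar\lambda$.

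\emph{Positivity of $\bar\lambda$ and strong convergence.} Being a solution, $\bar u$ obeys the Pohozaev identity $\mathcal P(\bar u)=0$ of Lemma~\ref{Lep}; combining it with the equation tested against $\bar u$ yields
\[
\bar\lambda\,\|\bar u\|_2^2\ =\ -2\,\mathcal N(\bar u)+\int_{\R^2}\big(e^{\bar u^2}-1-\bar u^2\big)\mathrm{d}x+\int_{\R^2}\big(2g+\nabla g\cdot x\big)\bar u\,\mathrm{d}x .
\]
The last integral is $\ge0$ by {\rm(G2)} and $\bar u\ge0$; moreover the radial Cauchy--Schwarz bound $\mathcal N(\bar u)\le\frac1{16\pi}\|\bar u\|_4^4\|\bar u\|_2^2$ and $\|\bar u\|_2^2\le c<4\pi$ (shrinking $c_0$ if needed) give $2\mathcal N(\bar u)<\tfrac12\|\bar u\|_4^4\le\int_{\R^2}(e^{\bar u^2}-1-\bar u^2)\,\mathrm{d}x$, whence $\bar\lambda\|\bar u\|_2^2>0$, i.e.\ $\bar\lambda>0$. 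Finally set $v_n=u_n-\bar u\rightharpoonup0$: subtracting $\Phi'(\bar u)+\bar\lambda\bar u=0$ from $\Phi'(u_n)+\lambda_n u_n=o(1)$ and pairing with the bounded sequence $v_n$, the nonlocal and $u^2(e^{u^2}-1)$ contributions tend to $0$, leaving $\|\nabla v_n\|_2^2+\bar\lambda\|v_n\|_2^2\to0$; since $\bar\lambda>0$ this forces $v_n\to0$ in $H^1_r(\R^2)$. Hence $\bar u\in\mathcal S_c$, $\|\nabla\bar u\|_2^2=\lim\|\nabla u_n\|_2^2\le s_0$ and $\Phi(\bar u)=m(c)<0$; the barrier $\Phi\ge\delta_0$ on $\{\|\nabla u\|_2^2=s_0\}$ excludes $\|\nabla\bar u\|_2^2=s_0$, so $\|\nabla\bar u\|_2^2<s_0$ and $(\bar u,\bar\lambda)$ is the required couple.

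\emph{Main obstacle.} The delicate point is the compactness, namely ruling out loss of $L^2$-mass at infinity along the minimizing sequence. Unlike in the $L^2$-subcritical case, a strict sub-additivity of $c\mapsto m(c)$ seems out of reach under only {\rm(G1)}--{\rm(G2)}, so one cannot conclude by the usual concentration--compactness scheme; instead one must first produce the limiting equation, then establish $\bar\lambda>0$ --- which is exactly where the smallness of $c$ and assumption {\rm(G2)} are used, through the Pohozaev identity --- and only afterwards deduce the $H^1$-convergence. A secondary, more technical difficulty, namely controlling the critical exponential nonlinearity when passing to limits, is kept under control because the constraint $\|\nabla u\|_2^2<s_0\le\pi/3$ keeps us strictly below the Trudinger--Moser threshold throughout.
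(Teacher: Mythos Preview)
Your argument is correct and follows the same global strategy as the paper: establish a local-minimum geometry for $\Phi$ on $\mathcal{S}_c\cap A_{s_0}$, extract a Palais--Smale minimizing sequence via an Ekeland-type principle (Lemma~\ref{lem 2.17} in the paper), rule out $\bar u=0$ via $m(c)<0$, derive $\bar\lambda>0$ from the Pohozaev identity combined with {\rm(G2)} and the estimate $2B(\bar u)\le\frac{1}{8\pi}\|\bar u\|_2^2\|\bar u\|_4^4<\frac12\|\bar u\|_4^4$, and conclude strong $H^1$-convergence from $\|\nabla v_n\|_2^2+\bar\lambda\|v_n\|_2^2\to0$. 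The one substantive difference lies in the proof that $m(c)<0$: you test with $w_c=\sqrt{c}\,\phi$ for a fixed radial $\phi\ge0$ with $\int_{\R^2}g\phi\,\mathrm{d}x>0$, obtaining $\Phi(w_c)=-\sqrt{c}\int_{\R^2}g\phi\,\mathrm{d}x+O(c)<0$ for small $c$, whereas the paper (Lemma~\ref{lem 3.3}) uses the mass-preserving dilation $t\tilde u(t\cdot)$ and a more delicate indirect bound $\Phi(t\tilde u_t)\le\bar H_{\tilde u}(t)\to0^-$, which the authors single out as a main technical novelty (Remark~\ref{rem 1.2}\,i)). Your amplitude-scaling trick is simpler and entirely adequate for the theorem as stated (existence of \emph{some} $c_0>0$); the paper's dilation argument, together with its sharper lower bound $\Phi(u)\ge\|\nabla u\|_2^2\,\tilde h_c(\|\nabla u\|_2^2)$ carrying explicit constants, yields an explicit description of $c_0$ and $s_0$ (see \eqref{c1d}--\eqref{c0}) and a template that transfers more readily to other nonhomogeneous problems where amplitude scaling does not decouple the competing terms so cleanly.
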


	\begin{theorem}\label{thm 1.2}
Assume that {\rm(G1)}-{\rm(G4)} hold. Then there exist $c_0>0$
		and $s_0 \in \left(0,\f{\pi}{3}\right]$ such that, for any $c\in (0,c_0)$,
\eqref{Pa1}  has a  second couple solution  $(\hat{u},\hat{\lambda}_c)\in \mathcal{S}_c \times \R$  with some $\hat{\lambda}_c>0$ and satisfying
			\begin{equation}\label{M24}
				0<\Phi(\hat{u})< m(c)+2\pi.
			\end{equation}
	\end{theorem}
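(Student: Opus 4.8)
The plan is to obtain $\hat u$ as a mountain--pass critical point of $\Phi$ on $\mathcal S_c$, built over the local minimizer $\bar u$ of Theorem~\ref{thm 1.1}, which satisfies $\Phi(\bar u)=m(c)<0$. First I would set up the mountain--pass geometry. Using the Trudinger--Moser inequality (Lemma~\ref{lem 2.3}), the Gagliardo--Nirenberg inequality $\|u\|_4^4\le C\|u\|_2^2\|\nabla u\|_2^2$, the elementary bound $0\le e^{t^2}-1-t^2\le e^{4}t^4$ for $|t|\le2$ and the smallness of $c$, one shows that $\beta(c):=\inf\{\Phi(u):u\in\mathcal S_c,\ \|\nabla u\|_2^2=s_0\}\ge s_0/4>0>m(c)$, so the sphere $\{\|\nabla u\|_2^2=s_0\}$ acts as a positive barrier inside $\mathcal S_c$. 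On the other hand, along the $L^2$--preserving dilation $u_t(x):=t\,u(tx)$ one has $\|\nabla u_t\|_2^2=t^2\|\nabla u\|_2^2$ and $D(u_t)=t^2D(u)$, where $D(u):=\int_{\R^2}\frac{u^{2}}{|x|^{2}}(\int_{0}^{|x|}\frac{s}{2}u^{2}(s)\,\mathrm{d}s)^{2}\mathrm{d}x$ is the nonlocal term, while $\frac1{2t^2}\int_{\R^2}(e^{t^2u^2}-1-t^2u^2)\,\mathrm{d}x$ grows super-polynomially in $t$; hence $\Phi(\bar u_t)\to-\infty$ and there is $T>1$ with $e:=\bar u_T$ satisfying $\|\nabla e\|_2^2>s_0$ and $\Phi(e)<m(c)$. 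Setting $\Gamma:=\{\gamma\in C([0,1],\mathcal S_c):\gamma(0)=\bar u,\ \gamma(1)=e\}$ and $M(c):=\inf_{\gamma\in\Gamma}\max_{[0,1]}\Phi\circ\gamma$, every $\gamma\in\Gamma$ must cross the barrier, whence $M(c)\ge\beta(c)>0$.

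The crucial step is the level estimate $M(c)<m(c)+2\pi$, which is exactly what makes the later compactness analysis go through. I would prove it by estimating $\max\Phi$ along a carefully chosen path in $\mathcal S_c$ --- a dilation path $t\mapsto\bar u_t$ on an appropriate range, possibly concatenated with rescaled Moser-type concentrating functions --- using the explicit form of $\Phi(\bar u_t)$, the relation $\mathcal P(\bar u_{t^\ast})=0$ at a maximum point $t^\ast$ of $t\mapsto\Phi(\bar u_t)$ (which by \eqref{pa4} reads $(\|\nabla\bar u\|_2^2+D(\bar u))(t^\ast)^4=\int_{\R^2}[((t^\ast)^2\bar u^2-1)e^{(t^\ast)^2\bar u^2}+1]\,\mathrm{d}x$, up to the $g$-terms), the normalisation $s_0\le\pi/3$, the smallness of $c$, and the Trudinger--Moser inequality. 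Assumptions (G3)--(G4) enter here to control the perturbation term $\frac1t\int_{\R^2}g(y/t)\bar u(y)\,\mathrm{d}y$ along the fibering. I expect the choice of path and the verification of the sharp constant $2\pi$ to be the most delicate part, because of the intricate interplay of the critical exponential nonlinearity, the nonlocal term and $g$ under dilations.

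Next I build a Palais--Smale sequence carrying the Pohozaev information. Working with the augmented functional $\widetilde\Phi(s,u):=\Phi(s\star u)$, where $s\star u:=e^{s}u(e^{s}\cdot)$ preserves the $L^2$-norm, on $\R\times\mathcal S_c$, the standard minimax/deformation argument produces $(u_n)\subset\mathcal S_c$ with $\Phi(u_n)\to M(c)$, $\|\Phi|_{\mathcal S_c}'(u_n)\|\to0$ and $\mathcal P(u_n)\to0$, together with $\lambda_n:=-\frac1c\langle\Phi'(u_n),u_n\rangle$, so that $\Phi'(u_n)+\lambda_nu_n\to0$ in $(H^1_r(\R^2))^{-1}$. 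Boundedness of $(u_n)$ in $H^1_r(\R^2)$ follows from the algebraic identity $2\Phi(u)-\mathcal P(u)=\int_{\R^2}W(u)\,\mathrm{d}x-\int_{\R^2}(3g+\nabla g\cdot x)u\,\mathrm{d}x$, with $W(t):=(t^2-2)e^{t^2}+2+t^2=\sum_{j\ge3}\frac{j-2}{j!}t^{2j}\ge0$, the pointwise bound $e^{t^2}-1-t^2\le\frac12 W(t)$ for $|t|\ge2$, the Pohozaev relation $\|\nabla u_n\|_2^2+D(u_n)=\int_{\R^2}[(u_n^2-1)e^{u_n^2}+1]\,\mathrm{d}x-\int_{\R^2}(g+\nabla g\cdot x)u_n\,\mathrm{d}x+o(1)$, and Gagliardo--Nirenberg control of the $g$-terms via (G1)--(G2): combining these yields $\|\nabla u_n\|_2^2\le 3M(c)+Cc+Cc^{1/4}\|\nabla u_n\|_2^{1/2}+o(1)$, hence $(u_n)$ is bounded. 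Testing $\Phi'(u_n)+\lambda_nu_n\to0$ against $u_n$ then shows $(\lambda_n)$ is bounded, and eliminating $\|\nabla u_n\|_2^2$ via the Pohozaev relation gives $\lambda_n c=\int_{\R^2}(e^{u_n^2}-1-u_n^2)\,\mathrm{d}x+\int_{\R^2}(2g+\nabla g\cdot x)u_n\,\mathrm{d}x-2D(u_n)+o(1)$; thus, up to a subsequence, $\lambda_n\to\hat\lambda_c$, the positivity $\hat\lambda_c>0$ to be read off from this identity in the limit, using (G2)--(G4), $\hat u\not\equiv0$ (established below) and the smallness of $c$.

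Finally comes the compactness step. Up to a subsequence $u_n\rightharpoonup\hat u$ in $H^1_r(\R^2)$, $u_n\to\hat u$ in $L^p(\R^2)$ for every $p\in(2,\infty)$ by the radial compact embedding, and $u_n\to\hat u$ a.e.; in particular $\int_{\R^2}gu_n\,\mathrm{d}x\to\int_{\R^2}g\hat u\,\mathrm{d}x$ (since $g\in L^{4/3}$), and the nonlocal term passes to the limit by a Brezis--Lieb-type splitting of $D(\cdot)$ on radial functions. No mass vanishes --- otherwise $\mathcal P(u_n)\to0$ would force $\|\nabla u_n\|_2\to0$, hence $\Phi(u_n)\to0$, contradicting $M(c)\ge\beta(c)>0$ --- so $\hat u\not\equiv0$. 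The delicate point is the convergence of $\int_{\R^2}(e^{u_n^2}-1)u_n^2\,\mathrm{d}x$ and $\int_{\R^2}(e^{u_n^2}-1-u_n^2)\,\mathrm{d}x$: writing $v_n:=u_n-\hat u\rightharpoonup0$ and $\|\nabla u_n\|_2^2=\|\nabla\hat u\|_2^2+\|\nabla v_n\|_2^2+o(1)$, the bound $M(c)<m(c)+2\pi$ --- combined with the energy and Pohozaev identities and with the fact that the formation of a concentrating bubble would contribute at least $2\pi$ to the limiting energy --- forces $\limsup_n\|\nabla v_n\|_2^2<4\pi$, so that $\{e^{u_n^2}\}$ is uniformly integrable by the Trudinger--Moser inequality and the exponential terms converge. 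Passing to the limit, $\hat u$ weakly solves \eqref{Pa1} with $\lambda=\hat\lambda_c$, the identity of the third step yielding $\hat\lambda_c c=\int_{\R^2}(e^{\hat u^2}-1-\hat u^2)\,\mathrm{d}x+\int_{\R^2}(2g+\nabla g\cdot x)\hat u\,\mathrm{d}x-2D(\hat u)>0$ by (G2)--(G4), $\hat u\not\equiv0$ and $c$ small; then testing the equations for $u_n$ and for $\hat u$ against $u_n-\hat u$ and subtracting, using $\hat\lambda_c>0$, gives $u_n\to\hat u$ strongly in $H^1_r(\R^2)$, so $\hat u\in\mathcal S_c$ and $\Phi(\hat u)=M(c)$; recalling $M(c)\ge\beta(c)>0$, this yields $0<\Phi(\hat u)=M(c)<m(c)+2\pi$, which is \eqref{M24}. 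I expect this compactness analysis --- simultaneously taming the critical exponential nonlinearity (via the sharp level bound $M(c)<m(c)+2\pi$), the loss of compactness at infinity (via radial symmetry), and the interaction with the nonlocal term --- together with the sharp level estimate of the second step, to be the main obstacles.
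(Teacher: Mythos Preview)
Your overall architecture---mountain-pass geometry over the local minimizer, an augmented functional to get a Palais--Smale sequence with $\mathcal P(u_n)\to 0$, boundedness from a linear combination of $\Phi$ and $\mathcal P$, and compactness via a gradient-splitting bound $\|\nabla(u_n-\hat u)\|_2^2<4\pi$---matches the paper's. But the decisive step, the level estimate $M(c)<m(c)+2\pi$, is where your proposal has a genuine gap.

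You suggest estimating $\max\Phi$ along ``a dilation path $t\mapsto\bar u_t$ \dots possibly concatenated with rescaled Moser-type concentrating functions.'' A pure dilation path cannot produce the constant $2\pi$: the maximum of $t\mapsto\Phi(\bar u_t)$ is an uncontrolled number having nothing to do with the Trudinger--Moser threshold. Concatenation does not help either, because at the junction point you would lose the link to $m(c)$. What the paper does---and what seems necessary---is a \emph{superposition}: one sets
\[
W_{n,t}(x):=u_c(\tau x)+t\,\tilde w_n(\tau x),\qquad \tau:=c^{-1/2}\|u_c+t\tilde w_n\|_2,
\]
with $\tilde w_n$ the usual Moser functions, so that $W_{n,t}\in\mathcal S_c$ for all $t\ge 0$ and $W_{n,0}=u_c$. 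The crucial cancellation is that, since $u_c$ is a constrained critical point, the first-order term $\langle\Phi'(u_c),t\tilde w_n\rangle$ and the corrections coming from the rescaling by $\tau$ combine (via the Euler--Lagrange equation and the identity $\lambda_c c=\int(e^{u_c^2}-1-u_c^2)+\int(2g+\nabla g\cdot x)u_c-2D(u_c)$) to leave
\[
\Phi(W_{n,t})\le m(c)+\Psi_n(t)+o_n(1),\qquad \Psi_n(t)=\frac{t^2}{4}-\frac{1}{2\tau^2}\int_{\R^2}\bigl(e^{t^2\tilde w_n^2}-1-t^2\tilde w_n^2\bigr)\,\mathrm dx,
\]
and then one checks $\sup_{t>0}\Psi_n(t)<2\pi$ by direct computation on $\tilde w_n$. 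Assumptions (G3)--(G4) are used precisely to control $\int g(x)\bigl(u_c(x)-u_c(\tau x)\bigr)\,\mathrm dx$ against $\frac{t}{c}\bigl(\int u_c\tilde w_n\bigr)\int(\nabla g\cdot x)u_c$; the monotonicity (G3) and the one-sided convexity (G4) are exactly what makes this difference $O(1/\log n)$ after expanding $\tau$. Your sketch invokes (G3)--(G4) only in connection with the fibering $\bar u_t$, which is not where they enter.

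A smaller point: your nonvanishing argument (``$\mathcal P(u_n)\to 0$ would force $\|\nabla u_n\|_2\to 0$'') skips a step. From $\hat u=0$ alone, $\mathcal P(u_n)\to 0$ does not directly give $\|\nabla u_n\|_2\to 0$; one first needs $\limsup\|\nabla u_n\|_2^2<4\pi$ (which follows from $\|\nabla u_n\|_2^2\le 2\Phi(u_n)+o(1)=2M(c)+o(1)<2m(c)+4\pi<4\pi$), then Trudinger--Moser makes the exponential integrals $o(1)$, and only then does $\Phi(u_n)-\frac12\mathcal P(u_n)\to 0$ contradict $M(c)>0$.
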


\begin{remark}\label{rem 1.1}
There are indeed functions which satisfy {\rm(G1)}-{\rm(G4)}. For example:
\begin{equation}\label{wnt22}
		g(x)=
		\begin{cases}
			1/2, \ \ & 0\le |x|\le 1;\\
			(|x|^2+1)^{-1}, \ \ &  |x|\geq 1 .
		\end{cases}
	\end{equation}
\end{remark}
\par

\begin{remark}\label{rem 1.2}
i)\ Our approach to establishing the geometry of local minima in Theorem {\rm\ref{thm 1.1}}, particularly when verifying their sign,
is fundamentally different  from that of {\rm\cite{YCS}}.
Specifically,  the presence of an $L^2$-subcritical term means that Yao-Chen-Sun  {\rm\cite{YCS}} can easily deduce $\lim_{t \to 0^{+}}\Phi_0(tu(tx))=0^{-}$
where $tu(tx)$ is a dilation preserving the $L^2$-norm.
However, due to the presence of $g$, our  situation is more complicated as it remains unclear whether $\lim_{t \rightarrow 0^{+}}\Phi(tu(tx)) = 0^{-}$
where $\Phi(tu(tx))$ is given as follows:
\begin{align}\label{hud1111}
			 \Phi(tu(tx))=\f{t^2}{2}\left[\|\nabla u\|_2^2+B(u)\right]
			-\f{1}{2t^{2}}\int_{\R^2}\left(e^{t^2u^2}-1-t^2u^2\right)\mathrm{d}x-\f{1}{t}\int_{\R^2}g\left(\frac{x}{t}\right)u\mathrm{d}x.
		\end{align}
In contrast, by carefully selecting  suitable functions $\tilde{u}$, we derive the pivotal 
 inequality $   \Phi(t\tilde{u}(tx))\le \bar{H}_{\tilde{u}}(t)  $  and  $\bar{H}_{\tilde{u}}(t) \rightarrow   0^{-}    $  as $t \rightarrow 0^{+}$
in a tactfully indirect manner
which enables the determination of the sign of local minima, as detailed in Lemma {\rm\ref{lem 3.3}}.
Moreover, the  idea can be applied to treat a wide range of nonlinearities involving  nonhomogeneous perturbation, such as,
\begin{equation*}
f(x,u)=g(x)+h(u),
\end{equation*}
  where  $h(u)$ satisfies \eqref{Cr1} or $h(u) \sim |u|^{p-2}u,\ \ 2<p<+\infty$.
\par
ii)\ In the proof of Theorem {\rm\ref{thm 1.2}}, to restore the lack of compactness caused by the exponential term, the key issue is to
establish  a precise upper estimate of the  energy level $M(c)$, namely,
	\begin{equation}\label{MMCm}
		M(c)<m(c)+2\pi.
	\end{equation}
Inspired by recent works {\rm\cite{ctmz,ctcv}}, in order to derive this inequality \eqref{MMCm}, we consider a superposition of Moser-type functions and the previously obtained first solution, while ensuring that the resulting function remains constrained to $\mathcal{S}_c$ through appropriate technical modifications.
However, due to the additional difficulties arising from the nonhomogeneous perturbation and the nonlocal term,  one  needs to develop delicate analyses to control the energy. For more details, see \eqref{BWt}, \eqref{in} and Lemma {\rm\ref{lem 3.14}}.

iii)\
In  this paper,   the existence of normalized solutions   to  nonhomogeneous Chern-Simons-Schr\"odinger equation       with critical exponential growth is first studied.
Our results are entirely new, even for the Schr\"odinger equation that is when nonlocal terms are absent. We believe our methods may be adapted and modified to deal with more constrained problems with nonhomogeneous perturbation.
\end{remark}

\bigskip
  \par
 The paper is organized as follows. Section 2 is devoted to some  preliminary results, which will  be used in the rest paper. In Section 3, we consider the existence of a local minima for $\Phi$ on $\mathcal{S}_c\cap A_{s_0}$, and give the proof of Theorem \ref{thm 1.1}. In Section 4, we study the
existence of a critical point of mountain-pass type for $\Phi$ on $\mathcal{S}_c$, and complete the proof of
 Theorem \ref{thm 1.2}.

 \bigskip
 \par
 Throughout the paper, we make use of the following notations:

  $\bullet$ \  $H^1(\R^2)$ denotes the usual Sobolev space equipped with the inner product and norm
   $$
   (u,v)=\int_{\R^2}(\nabla u\cdot \nabla v +uv)\mathrm{d}x, \ \
 \|u\|=(u,u)^{1/2}, \ \ \forall\ u, v\in H^1(\R^2);
 $$

 \par
  $\bullet$ \ $H_{r}^1(\R^2)$ denotes the space of spherically symmetric functions belonging to $H^1(\R^2)$:
  $$
   H_{r}^1(\R^2):=\{u\in H^1(\R^2)\ \big|\ u(x)=u(|x|)\ \hbox{a.e. in } \R^2\};
  $$

     $\bullet$ \ $L^s(\R^2) (1\le s< \infty)$  denotes the Lebesgue space with the norm $\|u\|_s
 =\left(\int_{\R^2}|u|^s\mathrm{d}x\right)^{1/s}$;

 \par
     $\bullet$ \  For any $u\in H^1(\R^2)\setminus \{0\}$, $u_t(x):=u(tx)$ for $t>0$;

 \par
     $\bullet$ \ For any $x\in \R^2$ and $r>0$, $B_r(x):=\{y\in \R^2: |y-x|<r \}$
     and $B_r=B_r(0)$;

 \par
     $\bullet$ \ $C_1, C_2,\cdots$ denote positive constants possibly different in different places, which are dependent on $c>0$.

{\section{ Preliminary results}}
	\setcounter{equation}{0}
	
	In this section, we give some preliminary results which will be often used throughout the rest of the paper.
	 \begin{lemma}\label{lemGN} {\rm \cite{We}(Gagliardo-Nirenberg inequality)} 
 Let $s> 2$ and $u\in H^1(\R^2)$. Then there exists a sharp constant $\mathcal{C}_s>0$ such that
 \begin{equation}\label{GNs}
		\|u\|_s\le \mathcal{C}_s\|u\|_2^{\frac{2}{s}}\|\nabla u\|_2^{\frac{s-2}{s}} \ \ \ \ \mbox{for}\ \ s>2.
	\end{equation}
 \end{lemma}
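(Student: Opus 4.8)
The plan is to give a self-contained proof, which really amounts to establishing the inequality with \emph{some} finite constant; the optimal (``sharp'') constant is then obtained for free. The first observation is that the inequality is scale covariant: the quantity $\|u\|_s\,\|u\|_2^{-2/s}\,\|\nabla u\|_2^{-(s-2)/s}$ is left unchanged both by the scalar multiplication $u\mapsto\mu u$ ($\mu>0$) and by the dilation $u\mapsto u(\lambda\,\cdot)$ ($\lambda>0$), because in dimension two one has $\|\nabla\big(u(\lambda\,\cdot)\big)\|_{L^2(\R^2)}=\|\nabla u\|_{L^2(\R^2)}$. Consequently, once a bound of the form $\|u\|_s\le C\,\|u\|_2^{a}\,\|\nabla u\|_2^{b}$ is known for some finite $C$ and some nonnegative $a,b$, testing it against $\mu u$ forces $a+b=1$ and testing it against $u(\lambda\,\cdot)$ forces $a=2/s$, hence $b=(s-2)/s$. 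One then defines $\mathcal C_s$ as the infimum of all admissible constants (equivalently the supremum of $\|u\|_s/\big(\|u\|_2^{2/s}\|\nabla u\|_2^{(s-2)/s}\big)$ over $u\neq0$), which is finite by the bound and strictly positive because the quotient is positive on any fixed nonzero function; this is the sharp constant. Finally, by density of $C_c^\infty(\R^2)$ in $H^1(\R^2)$ and continuity of the embedding $H^1(\R^2)\hookrightarrow L^s(\R^2)$ for $s\in[2,\infty)$, it suffices to prove the bound for $u\in C_c^\infty(\R^2)$.

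The analytic core is a one-dimensional slicing (Gagliardo-type) estimate. For $u\in C_c^\infty(\R^2)$, the fundamental theorem of calculus gives $u^2(x_1,x_2)\le 2\int_{\R}|u\,\partial_1u|(y_1,x_2)\,\mathrm dy_1$ and $u^2(x_1,x_2)\le 2\int_{\R}|u\,\partial_2u|(x_1,y_2)\,\mathrm dy_2$; multiplying these two bounds, integrating over $\R^2$, and applying Fubini and Cauchy--Schwarz yields
\begin{equation*}
\|u\|_4^4\le 4\,\|u\,\partial_1u\|_1\,\|u\,\partial_2u\|_1\le 4\,\|u\|_2^2\,\|\nabla u\|_2^2 ,
\end{equation*}
which is the case $s=4$. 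For the remaining even integers $s=2k$, $k\ge2$, I would induct: applying the Gagliardo inequality $\|w\|_{L^2(\R^2)}\le\|\nabla w\|_{L^1(\R^2)}$ to $w=|u|^{k+1}$ gives $\|u\|_{2(k+1)}^{k+1}\le C\int_{\R^2}|u|^{k}|\nabla u|\,\mathrm dx\le C\,\|u\|_{2k}^{k}\,\|\nabla u\|_2$, and inserting the inductive estimate for $\|u\|_{2k}$ closes the step. For an arbitrary $s>2$ one picks the smallest even integer $2k\ge s$, so that $s\in(2k-2,2k]$ with $k\ge2$, and interpolates by H\"older between $L^{2k-2}$ and $L^{2k}$ (the $L^{2k-2}$ endpoint being the trivial $L^2$ identity when $k=2$); substituting the already established estimates at these two levels, the exponents of $\|u\|_2$ and $\|\nabla u\|_2$ combine to exactly $2/s$ and $(s-2)/s$, in agreement with the scale covariance noted above.

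Passing back to general $u\in H^1(\R^2)$ by density completes the proof of the inequality, and $\mathcal C_s$ is the sharp constant by construction. If in addition one wants $\mathcal C_s$ to be attained — the ``sharp'' constant in the stronger sense, as in \cite{We} — I would take a maximizing sequence, replace each term by its Schwarz symmetrization, which does not increase $\|\nabla u\|_2$ and preserves $\|u\|_2$ and $\|u\|_s$ and thus reduces the problem to $H^1_r(\R^2)$, normalize $\|u\|_2=\|\nabla u\|_2=1$, and then use the \emph{compact} embedding $H^1_r(\R^2)\hookrightarrow L^s(\R^2)$, valid precisely because $s>2$, to extract a strongly $L^s$-convergent subsequence; the limit is an optimizer, and its Euler--Lagrange equation identifies it, after rescaling, with the positive radial ground state of $-\Delta Q+Q=Q^{s-1}$, yielding Weinstein's explicit value of $\mathcal C_s$.

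The routine parts are the two density passages and the bookkeeping in the H\"older interpolation. The genuine obstacle is the loss of compactness: the functional whose extremum defines $\mathcal C_s$ is invariant under the noncompact group of dilations, which is exactly why a plain Sobolev embedding does not suffice for the inequality (one needs the scale-covariant Gagliardo slicing), and why, for the attainment statement, one must first freeze the scale via $\|u\|_2=\|\nabla u\|_2=1$ and only afterwards invoke radial compactness.
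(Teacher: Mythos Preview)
The paper does not supply its own proof of this lemma; it merely cites Weinstein \cite{We} and uses the inequality as a black box. Your argument is correct and self-contained: the Gagliardo slicing estimate gives the case $s=4$, the induction to even integers via the endpoint Sobolev inequality $\|w\|_{L^2(\R^2)}\le C\|\nabla w\|_{L^1(\R^2)}$ is valid, and the H\"older interpolation to general $s>2$ produces exactly the scale-covariant exponents $2/s$ and $(s-2)/s$. Your sketch for the attainment of $\mathcal C_s$ (Schwarz symmetrization to reduce to $H^1_r$, normalization to freeze the dilation invariance, then compact embedding $H^1_r(\R^2)\hookrightarrow L^s(\R^2)$) is precisely Weinstein's argument.

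One small wording issue: in the density step you invoke the continuity of $H^1(\R^2)\hookrightarrow L^s(\R^2)$, which is itself a consequence of the inequality you are establishing. It is cleaner to apply the $C_c^\infty$ inequality directly to an $H^1$-approximating sequence $u_n\to u$, deduce that $\{u_n\}$ is Cauchy in $L^s$, and pass to the limit; this avoids any appearance of circularity. This is a matter of phrasing, not a genuine gap.
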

	
	\begin{lemma}\label{lem 2.2}{\rm\cite{ZTC}}
		For any $u\in H^1_{r}(\R^2)$ and $r_0>0$, there holds
		\begin{equation}\label{lm01}
			|u(x)|\le \frac{1}{\sqrt{\pi |x|}}\|u\|_2^{1/2}\|\nabla u\|_2^{1/2},
			\ \ \ \ \forall \ |x|\ge r_0.
		\end{equation}
	\end{lemma}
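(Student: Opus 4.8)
The plan is to exploit the fundamental theorem of calculus in the radial variable together with the Cauchy--Schwarz inequality, which is the standard route to a Strauss-type pointwise bound in two dimensions. Write $u = u(r)$ with $r = |x|$, and observe that since $u \in H^1_r(\R^2)$, we may assume $u$ is absolutely continuous on $(0,\infty)$ and that $r^{1/2}|u(r)| \to 0$ as $r \to \infty$; this follows from density of smooth compactly supported radial functions and the finiteness of $\|u\|_2$ and $\|\nabla u\|_2$. First I would compute, for fixed $r_0 > 0$ and $r \ge r_0$,
\[
\frac{\mathrm{d}}{\mathrm{d}\rho}\bigl(\rho\, u^2(\rho)\bigr) = u^2(\rho) + 2\rho\, u(\rho) u'(\rho),
\]
and integrate this identity from $r$ to $\infty$. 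Using $\rho u^2(\rho) \to 0$, this yields
\[
r\, u^2(r) = -\int_r^\infty u^2(\rho)\,\mathrm{d}\rho - 2\int_r^\infty \rho\, u(\rho) u'(\rho)\,\mathrm{d}\rho
\le 2\int_r^\infty \rho\, |u(\rho)|\, |u'(\rho)|\,\mathrm{d}\rho,
\]
where the first integral was dropped because it is nonnegative (it enters with a minus sign).

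Next I would apply the Cauchy--Schwarz inequality to the remaining integral, together with the fact that in polar coordinates $\int_{\R^2} u^2\,\mathrm{d}x = 2\pi \int_0^\infty \rho\, u^2(\rho)\,\mathrm{d}\rho$ and similarly for $\|\nabla u\|_2^2 = 2\pi\int_0^\infty \rho\,|u'(\rho)|^2\,\mathrm{d}\rho$:
\[
2\int_r^\infty \rho\,|u(\rho)|\,|u'(\rho)|\,\mathrm{d}\rho
\le 2\left(\int_0^\infty \rho\, u^2(\rho)\,\mathrm{d}\rho\right)^{1/2}\left(\int_0^\infty \rho\,|u'(\rho)|^2\,\mathrm{d}\rho\right)^{1/2}
= \frac{1}{\pi}\,\|u\|_2\,\|\nabla u\|_2.
\]
Combining the two displays gives $r\, u^2(r) \le \frac{1}{\pi}\|u\|_2\|\nabla u\|_2$, i.e. $|u(x)| \le \frac{1}{\sqrt{\pi|x|}}\|u\|_2^{1/2}\|\nabla u\|_2^{1/2}$ for all $|x| \ge r_0$, which is exactly \eqref{lm01}. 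Since $r_0 > 0$ was arbitrary and the estimate does not actually depend on it, the bound holds for all $|x| > 0$.

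The only genuinely delicate point — the main obstacle — is justifying the decay $\rho\, u^2(\rho) \to 0$ as $\rho \to \infty$ and the absolute continuity of the radial representative, so that the integration by parts above is legitimate. I would handle this by first establishing the inequality for $u \in C_c^\infty(\R^2) \cap H^1_r(\R^2)$, where all manipulations are trivially valid, and then passing to the limit: given $u \in H^1_r(\R^2)$, pick $u_n \in C_c^\infty$ radial with $u_n \to u$ in $H^1(\R^2)$; the inequality for each $u_n$ combined with $H^1$-convergence (which forces convergence of $\|u_n\|_2$ and $\|\nabla u_n\|_2$, and, along a subsequence, pointwise a.e.\ convergence) transfers the bound to $u$ at a.e.\ $x$, and then to every $x \ne 0$ after choosing the continuous representative. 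This density argument is routine and I would only sketch it.
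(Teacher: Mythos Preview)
Your argument is correct and is the standard Strauss-type proof of this radial decay estimate. Note, however, that the paper does not prove Lemma~\ref{lem 2.2} at all: it is simply quoted from \cite{ZTC}, so there is no ``paper's own proof'' to compare against. Your derivation --- differentiate $\rho u^2(\rho)$, integrate from $r$ to $\infty$, drop the nonpositive term, apply Cauchy--Schwarz, and convert the one-dimensional radial integrals back to $\R^2$ norms via the factor $2\pi$ --- is exactly the classical route and recovers the sharp constant $1/\sqrt{\pi}$. The density argument you sketch for passing from $C_c^\infty$ radial functions to general $u\in H^1_r(\R^2)$ is also the standard way to handle the boundary term at infinity, and is adequate here.
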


For any $u\in H_r^1(\R^2)$, we define
\begin{equation}\label{Bu}
		B(u):= \int_{\R^2}\frac{u^{2}}{|x|^{2}}\left(\int_{0}^{|x|}\frac{s}{2}u^{2}(s)\mathrm{d}s\right)^{2}\mathrm{d}x
		=\int_{\R^2}\frac{u^{2}}{4|x|^{2}}\left(\frac{1}{2\pi}\int_{B_{|x|}}u^{2}(y)\mathrm{d}y\right)^{2}\mathrm{d}x.
\end{equation}
In view of \cite[Lemma 2.3]{LL} and \cite[Proposition 2.4]{BHS}, we have the following important estimates on $B(u)$:

	\begin{lemma}\label{lemB.1} The following inequalities hold:
		\begin{equation}\label{Bu1}
			B(u) \le \frac{1}{16\pi}\|u\|_2^2\|u\|_4^4,
			\ \ \ \ \forall\ u\in H_r^1(\R^2)
		\end{equation}
		and
		\begin{equation}\label{Bu2}
			\|u\|_4^4\le 4 \|\nabla u\|_2 B(u)^{1/2},
			\ \ \ \ \forall\ u\in H_r^1(\R^2).
		\end{equation}
	\end{lemma}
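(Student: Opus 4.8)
The plan is to pass to polar coordinates and reduce both inequalities to one-dimensional estimates. For a radial $u$ write $u=u(r)$, $r=|x|$, and set $h(r):=\int_0^{r}\frac{s}{2}u^2(s)\,\mathrm{d}s$, so that $h(0)=0$ and $h'(r)=\frac{r}{2}u^2(r)$, i.e.\ $u^2(r)\,r=2h'(r)$. In these coordinates
\[
B(u)=2\pi\int_0^{\infty}\frac{u^2(r)h^2(r)}{r}\,\mathrm{d}r,\quad \|u\|_2^2=2\pi\int_0^{\infty}u^2(r)\,r\,\mathrm{d}r,\quad \|u\|_4^4=2\pi\int_0^{\infty}u^4(r)\,r\,\mathrm{d}r,\quad \|\nabla u\|_2^2=2\pi\int_0^{\infty}|u'(r)|^2\,r\,\mathrm{d}r .
\]
By a standard density argument — radial functions in $C_c^{\infty}(\R^2)$ are dense in $H_r^1(\R^2)$, $B$ is continuous on $H_r^1(\R^2)$ (or at least lower semicontinuous by Fatou's lemma, which already suffices for \eqref{Bu1}), and the right-hand sides are continuous via \eqref{GNs} — it suffices to establish both inequalities for such smooth, compactly supported $u$, for which every integral below is finite and every boundary term vanishes.

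To prove \eqref{Bu1} I would bound $h$ pointwise by the Cauchy--Schwarz inequality,
\[
h(r)=\frac12\int_0^{r}u^2(s)\,s\,\mathrm{d}s\le \frac12\Bigl(\int_0^{r}u^4(s)\,s\,\mathrm{d}s\Bigr)^{1/2}\Bigl(\int_0^{r}s\,\mathrm{d}s\Bigr)^{1/2}\le \frac{r}{2\sqrt2}\Bigl(\frac{\|u\|_4^4}{2\pi}\Bigr)^{1/2},
\]
so that $h^2(r)\le \dfrac{r^2\|u\|_4^4}{16\pi}$. Substituting this into the formula for $B(u)$ gives
\[
B(u)\le 2\pi\int_0^{\infty}\frac{u^2(r)}{r}\cdot\frac{r^2\|u\|_4^4}{16\pi}\,\mathrm{d}r=\frac{\|u\|_4^4}{8}\int_0^{\infty}u^2(r)\,r\,\mathrm{d}r=\frac{1}{16\pi}\|u\|_2^2\|u\|_4^4 .
\]

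To prove \eqref{Bu2} the idea is to trade one power of $u$ for $h'$ via $u^2(r)\,r=2h'(r)$ and then integrate by parts:
\[
\|u\|_4^4=2\pi\int_0^{\infty}u^2(r)\bigl(u^2(r)\,r\bigr)\,\mathrm{d}r=4\pi\int_0^{\infty}u^2(r)h'(r)\,\mathrm{d}r=-8\pi\int_0^{\infty}u(r)u'(r)h(r)\,\mathrm{d}r ,
\]
where the boundary term $\bigl[u^2(r)h(r)\bigr]_0^{\infty}$ vanishes because $h(0)=0$ and $u$ has compact support. Writing $u(r)u'(r)h(r)=\bigl(u'(r)\sqrt r\bigr)\bigl(u(r)h(r)/\sqrt r\bigr)$ and applying Cauchy--Schwarz yields
\[
\|u\|_4^4\le 8\pi\Bigl(\int_0^{\infty}|u'(r)|^2\,r\,\mathrm{d}r\Bigr)^{1/2}\Bigl(\int_0^{\infty}\frac{u^2(r)h^2(r)}{r}\,\mathrm{d}r\Bigr)^{1/2}=8\pi\cdot\frac{\|\nabla u\|_2}{\sqrt{2\pi}}\cdot\frac{B(u)^{1/2}}{\sqrt{2\pi}}=4\|\nabla u\|_2\,B(u)^{1/2}.
\]

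No genuine obstacle is expected here; the entire content lies in choosing these two Cauchy--Schwarz splittings and in verifying that the boundary term in the integration by parts vanishes. The latter is immediate for compactly supported $u$ and extends to all of $H_r^1(\R^2)$ by the density argument; alternatively one can argue directly, using Lemma \ref{lem 2.2} for the decay at infinity and noting that if $u^2(r)h(r)$ stayed bounded away from $0$ near the origin then $u$ would fail to lie in $L^4(\R^2)$, a contradiction.
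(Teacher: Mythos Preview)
Your proof is correct. The paper itself does not prove this lemma but simply cites \cite[Lemma 2.3]{LL} and \cite[Proposition 2.4]{BHS}; your argument --- the pointwise Cauchy--Schwarz bound on $h(r)$ for \eqref{Bu1}, and the substitution $u^2(r)\,r=2h'(r)$ followed by integration by parts and Cauchy--Schwarz for \eqref{Bu2} --- is precisely the standard derivation found in those references, so there is nothing to compare.
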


	Moreover, there is  the following compactness lemma we use later:
	
	\begin{lemma}\label{lem2.1} {\rm(\cite[Lemma 3.2]{BHS})}
		Suppose that a sequence  $\{u_{n}\}$ converges weakly to a function $u$ in $H^{1}_{r}(\mathbb{R}^{2})$ as $n\rightarrow \infty$. Then
		for each $\varphi\in H^{1}_{r}(\mathbb{R}^{2})$, $B(u_{n})$, $\left\langle B'(u_{n}),\varphi\right\rangle$
		and $\left\langle B'(u_{n}),u_{n}\right\rangle$ converge up to a subsequence to $B(u)$,
		$\left\langle B'(u),\varphi\right\rangle$ and $\left\langle B'(u),u\right\rangle$, respectively, as $n\rightarrow \infty$.
	\end{lemma}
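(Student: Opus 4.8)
\noindent\textbf{Proof strategy for Lemma~\ref{lem2.1}.} The plan is to reduce all three convergences to the single statement $B(u_n)\to B(u)$, which carries the analytic content, and then to obtain $\langle B'(u_n),u_n\rangle\to\langle B'(u),u\rangle$ from homogeneity and $\langle B'(u_n),\varphi\rangle\to\langle B'(u),\varphi\rangle$ by repeating the same estimates. Since $u_n\rightharpoonup u$ in $H^1_r(\R^2)$, the sequence is bounded, say $\|u_n\|\le M$, and also $\|u_n\|_4\le M'$ by the continuous embedding $H^1(\R^2)\hookrightarrow L^4(\R^2)$; passing to a subsequence, Rellich's theorem gives $u_n\to u$ in $L^2_{\mathrm{loc}}(\R^2)$ and $u_n\to u$ a.e. on $\R^2$. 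Writing $\tau_v(r):=\int_0^r\tfrac{s}{2}v^2(s)\,\mathrm{d}s=\tfrac1{4\pi}\int_{B_r}v^2\,\mathrm{d}y$, I would first record three facts holding uniformly in $n$: (i) by H\"older on $B_r$, $\tau_{u_n}(r)\le\tfrac{(M')^2}{4\sqrt\pi}\,r$ for every $r>0$ --- this is what tames the factor $|x|^{-2}$ at the origin; (ii) by Lemma~\ref{lem 2.2}, $u_n^2(x)\le\tfrac{M^2}{\pi|x|}$ for all $x\neq0$, while $\tau_{u_n}(r)\le\tau_{u_n}(\infty)\le\tfrac{M^2}{4\pi}$ --- this controls the tail; (iii) from $u_n\to u$ in $L^2_{\mathrm{loc}}$, $\tau_{u_n}(r)\to\tau_u(r)$ for every fixed $r>0$.

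For the main step, $B(u_n)\to B(u)$, I would write, in polar coordinates, $B(v)=2\pi\int_0^\infty\tfrac{v^2(r)}{r}\tau_v(r)^2\,\mathrm{d}r$ and split the integral at $r=1$. On $(1,\infty)$, fact (ii) yields $0\le\tfrac{u_n^2(r)}{r}\tau_{u_n}(r)^2\le\tfrac{M^6}{16\pi^3}\,r^{-2}$, an $n$-independent integrable majorant, so --- using the a.e.\ convergence of $u_n$ together with (iii) --- dominated convergence gives convergence of this piece. On $(0,1]$, fact (i) gives $0\le\tfrac{u_n^2(r)}{r}\tau_{u_n}(r)^2\le\tfrac{(M')^4}{16\pi}\,r\,u_n^2(r)=:g_n(r)$, and $g_n\to g:=\tfrac{(M')^4}{16\pi}\,ru^2$ in $L^1(0,1)$ by Rellich on $B_1$; since the integrand also converges a.e., the generalized dominated convergence theorem --- equivalently, two applications of Fatou's lemma, one to $g_n-\tfrac{u_n^2}{r}\tau_{u_n}^2\ge0$ and one to $\tfrac{u_n^2}{r}\tau_{u_n}^2$ --- gives convergence of this piece. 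Adding the two pieces yields $B(u_n)\to B(u)$.

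For the derivative terms, I would first note that $B(tv)=t^6B(v)$ for $t>0$, so Euler's identity gives $\langle B'(v),v\rangle=6B(v)$; hence $\langle B'(u_n),u_n\rangle=6B(u_n)\to6B(u)=\langle B'(u),u\rangle$ follows at once from the main step. For a fixed $\varphi\in H^1_r(\R^2)$, differentiating gives
\[
\langle B'(v),\varphi\rangle=2\int_{\R^2}\frac{v\varphi}{|x|^2}\,\tau_v(|x|)^2\,\mathrm{d}x+2\int_{\R^2}\frac{v^2(x)}{|x|^2}\,\tau_v(|x|)\left(\int_0^{|x|} s\,v(s)\varphi(s)\,\mathrm{d}s\right)\mathrm{d}x ,
\]
and I would estimate both terms exactly as in the main step, now carrying an extra factor $\varphi$ (respectively the inner integral $\int_0^{|x|}s\,v\varphi\,\mathrm{d}s$, which behaves just like $\tau_v$: by H\"older it is $\le\tfrac{|x|}{2\sqrt\pi}\|v\|_4\|\varphi\|_4$ near the origin and $\le\tfrac1{2\pi}\|v\|_2\|\varphi\|_2$ globally, and it converges pointwise by $L^2_{\mathrm{loc}}$-convergence). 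On $\{|x|\ge1\}$, Lemma~\ref{lem 2.2} applied to $u_n$ and to $\varphi$ together with the bounds in (ii) produces an $n$-independent $L^1$ majorant; on $\{|x|\le1\}$, the bound (i) absorbs $|x|^{-2}$ and one concludes by Rellich and the double-Fatou argument. Passing to the limit term by term gives $\langle B'(u_n),\varphi\rangle\to\langle B'(u),\varphi\rangle$, and the lemma follows.

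The step I expect to be the main obstacle is the simultaneous handling of the two critical regions: near $x=0$, radial $H^1$ functions in the plane admit no uniform pointwise bound and the weight $|x|^{-2}$ is singular, while near $x=\infty$ one loses $L^2$-compactness. The key is to leave the nonlocal factor $\tau_{u_n}$ untouched and exploit its two complementary $n$-uniform bounds --- $\tau_{u_n}(r)\lesssim r$ near the origin (H\"older) and $\tau_{u_n}(r)\lesssim1$ globally --- together with the Strauss-type decay $u_n^2(r)\lesssim r^{-1}$ from Lemma~\ref{lem 2.2}, so that a clean $n$-independent integrable majorant is available on each region and the passage to the limit reduces, on each piece, to a.e.\ convergence plus Rellich.
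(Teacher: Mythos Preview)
The paper itself does not prove this lemma; it is stated with a citation to \cite[Lemma~3.2]{BHS} and no argument is supplied, so there is nothing in the text to compare your proposal against.

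That said, your argument is correct and is essentially the standard route to this compactness result. The three ingredients you isolate are exactly the right ones: (a) the homogeneity identity $\langle B'(v),v\rangle=6B(v)$, which reduces the third convergence to the first for free; (b) the H\"older bound $\tau_{u_n}(r)\le Cr$ near the origin, which cancels the singular weight $|x|^{-2}$ and lets you dominate by $C\,r\,u_n^2(r)$ on $(0,1]$, with convergence of the majorant coming from Rellich on $B_1$; and (c) the Strauss decay of Lemma~\ref{lem 2.2} combined with the uniform bound $\tau_{u_n}(r)\le \|u_n\|_2^2/(4\pi)$ to produce an $n$-independent $L^1$ majorant on $(1,\infty)$. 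The passage to $\langle B'(u_n),\varphi\rangle$ is, as you say, the same estimate repeated with the auxiliary integral $\int_0^{|x|}s\,u_n(s)\varphi(s)\,\mathrm{d}s$ playing the role of $\tau_{u_n}$, and your bounds on that integral (linear growth in $r$ near $0$, uniformly bounded globally, pointwise convergent by $L^2_{\mathrm{loc}}$) are correct.
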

	
	\begin{lemma}\label{lem 2.3}{\rm  \cite{AT,Cao,CST}}
   {\rm (i)} If $\alpha>0$ and $u\in H^1(\R^2)$, then
 $$
   \int_{\R^2}\left(e^{\alpha u^2}-1\right)\mathrm{d}x<\infty;
 $$
  {\rm (ii)} if $u\in H^1(\R^2)$, $\|\nabla u\|_2^2\le 1$, $\|u\|_2 \le \beta < \infty$ and  $\alpha<4\pi$,
  then there exists a constant $C(\alpha,\beta)$, which depends only on $\alpha$ and $\beta$, such that
 \begin{equation}\label{TM1}
   \int_{\R^2}\left(e^{\alpha{u^2}}-1\right)\mathrm{d}x\le C(\alpha,\beta).
 \end{equation}
 \end{lemma}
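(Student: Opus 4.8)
\textbf{Proof proposal for Lemma \ref{lem 2.3}.}
The plan is to treat the three statements in turn, all of them being essentially restatements of the classical Trudinger--Moser inequality on $\R^2$ (in the Cao form) combined with a truncation/density argument.

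First, for part (i), fix $\alpha>0$ and $u\in H^1(\R^2)$. I would split the integrand using the elementary inequality $e^s-1\le s\,e^s$ and the series expansion $e^{\alpha u^2}-1=\sum_{k\ge 1}\frac{\alpha^k u^{2k}}{k!}$. On the set where $\alpha u^2\le 1$ the integrand is bounded by $e\,\alpha u^2$, whose integral is controlled by $\|u\|_2^2$. On the set $\Omega:=\{\alpha u^2>1\}$, which has finite measure since $u\in L^2$, one writes $u=v+w$ where $v$ is a truncation of $u$ at a large height $L$ chosen so that $\|\nabla w\|_2^2<4\pi/\alpha$ (possible because $\nabla u\in L^2$, so the gradient energy above any level tends to $0$), and then $w\in H^1$ has small gradient so that Cao's inequality (equivalently part (ii), once proved) applies to $\alpha w^2$; the bounded part $v$ contributes a finite amount because $e^{\alpha v^2}\le e^{\alpha L^2}$ is bounded pointwise and $|\Omega|<\infty$. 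Adding these pieces gives finiteness. Alternatively, and more cleanly, part (i) follows directly once part (ii) is established, simply by scaling: replace $u$ by $u/t$ with $t>0$ large enough that $\|\nabla(u/t)\|_2\le 1$ and $\alpha/t^2<4\pi$, apply (ii), and note that finiteness of $\int(e^{\alpha u^2/t^2}-1)$ implies finiteness of $\int(e^{\alpha u^2}-1)$ via $e^{\alpha u^2}-1\le (e^{\alpha u^2/t^2}-1)^{t^2}$ combined with H\"older when $t^2$ is an integer, or more directly via the layer-cake formula. I would present whichever is shortest.

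Second, for part (ii), this is exactly the subcritical Trudinger--Moser bound on all of $\R^2$ proved by Cao \cite{Cao} (see also \cite{AT,CST}): if $\|\nabla u\|_2\le 1$, $\|u\|_2\le\beta$, and $\alpha<4\pi$, then $\int_{\R^2}(e^{\alpha u^2}-1)\,\mathrm{d}x\le C(\alpha,\beta)$. Since this is a quoted result from the literature, I would simply cite it; if a self-contained argument were wanted, one decomposes $\R^2$ into $B_1$ and $\R^2\setminus B_1$: on the exterior, the radial-type decay of $|u|$ (or, without radial symmetry, a covering argument) together with $\|u\|_2\le\beta$ makes $u$ small and the integrand is controlled by a constant multiple of $u^2$; on the ball $B_1$ one invokes the Moser--Trudinger inequality on bounded domains after extending $u$, using $\alpha<4\pi$ to stay strictly below the critical threshold and $\|\nabla u\|_2\le 1$ to normalize. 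Collecting the two contributions yields the constant $C(\alpha,\beta)$.

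The main obstacle is purely a matter of bookkeeping rather than depth: one must be careful that the constant in (ii) depends \emph{only} on $\alpha$ and $\beta$ and not on $u$ itself, which is why the gradient normalization $\|\nabla u\|_2\le 1$ and the strict inequality $\alpha<4\pi$ are both indispensable — at $\alpha=4\pi$ the bound fails and the best constant becomes subtle. Since the lemma is stated with explicit citations to \cite{AT,Cao,CST}, the cleanest route is to attribute (ii) to those references verbatim and then derive (i) from (ii) by the scaling argument sketched above, so in the write-up I expect essentially no computation beyond the one-line scaling reduction. $\hfill\square$
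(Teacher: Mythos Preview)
The paper does not prove this lemma at all: it is stated with citations to \cite{AT,Cao,CST} and used as a black box throughout. Your decision to attribute (ii) directly to those references is therefore exactly what the paper does, and for (i) the paper likewise relies on the cited literature without further comment.

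That said, your ``cleaner'' scaling route from (ii) to (i) has a genuine gap. Replacing $u$ by $u/t$ with $t$ large and applying (ii) gives only
\[
\int_{\R^2}\bigl(e^{\alpha u^2/t^2}-1\bigr)\,\mathrm{d}x<\infty,
\]
and this does \emph{not} imply finiteness of $\int_{\R^2}(e^{\alpha u^2}-1)\,\mathrm{d}x$. The inequality you invoke, $e^{\alpha u^2}-1\le (e^{\alpha u^2/t^2}-1)^{t^2}$, is false: for $s>0$ small and $n>1$ one has $e^{ns}-1\approx ns$ while $(e^{s}-1)^n\approx s^n$, so the left side dominates. Equivalently, knowing that a nonnegative function lies in $L^1$ gives no control on its $t^2$-th power, so neither H\"older nor layer-cake rescues the step. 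The correct reduction is the density/truncation argument you sketched first: pick $v\in C_c^\infty(\R^2)$ with $\|\nabla(u-v)\|_2$ small enough that $2\alpha\|\nabla(u-v)\|_2^2<4\pi$, use $\alpha u^2\le 2\alpha v^2+2\alpha(u-v)^2$, bound the $v$-contribution trivially on its compact support, and control the $(u-v)$-contribution via (ii) applied to the rescaled function $(u-v)/\|\nabla(u-v)\|_2$ with exponent $2\alpha\|\nabla(u-v)\|_2^2<4\pi$. If you keep any argument for (i) in the write-up, keep that one and drop the scaling claim.
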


	By {\rm(G1)}, Lemmas \ref{lem2.1} and \ref{lem 2.3}, we have $\Phi\in \mathcal{C}^1(H_r^1(\R^2), \R)$,
	\begin{align}\label{Phd}
		\langle \Phi'(u), v \rangle
		&  =  \int_{{\R}^2}\nabla u\cdot\nabla v\mathrm{d}x+\int_{\mathbb{R}^{2}}\frac{u^{2}}{|x|^{2}}\left(\int_{0}^{|x|}\frac{s}{2}u^{2}(s)\mathrm{d}s\right)
		\left(\int_{0}^{|x|}su(s)v(s)\mathrm{d}s\right)\mathrm{d}x \nonumber\\
		& \ \ \ \ +\int_{\mathbb{R}^{2}}\frac{uv}{|x|^{2}}\left(\int_{0}^{|x|}\frac{s}{2}u^{2}(s)\mathrm{d}s\right)^2\mathrm{d}x
		-\int_{\R^2}g(x)v\mathrm{d}x
		-\int_{\R^2}\left(e^{u^2}-1\right)uv\mathrm{d}x, \nonumber\\
		&\hspace{9cm} \ \ \ \ \ \ \ \forall\ u,v\in H_r^1(\R^2),
	\end{align}
	and
	\begin{align}\label{Phd1}
		\langle \Phi'(u), u \rangle
		&  = \int_{\R^2}|\nabla u|^2\mathrm{d}x+3 B(u)-\int_{\R^2}\left(e^{u^2}-1\right)u^2\mathrm{d}x-\int_{\R^2}g(x)u\mathrm{d}x, \forall\ u\in H_r^1(\R^2).
	\end{align}
	Moreover, \eqref{pa4} and \eqref{Bu} give that
	\begin{equation}\label{Pu}
		\mathcal{P}(u) = \|\nabla u\|_2^2+ B(u)
		 -\int_{\R^2}\left[\left(u^2-1\right)e^{u^2}+1\right]\mathrm{d}x+\int_{\R^2}\left[g(x)+\nabla g(x)\cdot x\right]u\mathrm{d}x, \ \ \ \ \forall\ u\in H_r^1(\R^2).
	\end{equation}
	
	\begin{lemma}\label{lem 2.4}\rm(\cite[Lemma 2.4]{ctcv})
		If $u\in H^1(\R^2)$, then
		\begin{equation}\label{u2k}
			\int_{\R^2}|u|^{2k}\mathrm{d}x
			\le \f{2+2^{2k-1}(k-2)}{(k-2)\pi^{k-1}}\|u\|_2^{k}\|\nabla u\|_2^{k}
			+\f{k!}{2\pi^{k-1}}\|\nabla u\|_2^{2k}, \ \  \forall \ k\in \N, k>2,
		\end{equation}
		\begin{align}\label{D06}
			&      \int_{\R^2}\left(e^{u^2}-1-u^2\right)\mathrm{d}x \le  \f{1}{2}\|u\|_4^4+\f{\|\nabla u\|_2^6}{2\pi(\pi-\|\nabla u\|_2^2)}\nonumber\\
			&      \ \  +\f{2c^{\f{3}{2}}\|\nabla u\|_2^3}{\pi^2}\sum_{k=0}^{\infty}\f{4^{k+2}(k+1)+1}{(k+1)(k+3)!}
			\left(\f{\|u\|_2\|\nabla u\|_2}{\pi}\right)^k, \ \ \forall \ u\in \mathcal{S}_c, \|\nabla u\|_2^2<\pi
		\end{align}
		and
		\begin{align}\label{D08}
			&      \int_{\R^2}\left[\left(u^2-1\right)e^{u^2}+1\right]\mathrm{d}x
			\le \f{1}{2}\|u\|_4^4+\f{\left(2\pi-\|\nabla u\|_2^2\right)\|\nabla u\|_2^6}{2\pi\left(\pi-\|\nabla u\|_2^2\right)^2}\nonumber\\
			&      \ \  +\f{2c^{\f{3}{2}}\|\nabla u\|_2^3}{\pi^2}\sum_{k=0}^{\infty}\f{(k+2)\left[4^{k+2}(k+1)+1\right]}
			{(k+1)(k+3)!}\left(\f{\|u\|_2\|\nabla u\|_2}{\pi}\right)^k,\nonumber\\
			& \hspace{6cm}     \forall \ u\in \mathcal{S}_c, \|\nabla u\|_2^2<\pi.
		\end{align}
	\end{lemma}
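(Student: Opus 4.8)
\noindent The heart of the matter is the refined Gagliardo--Nirenberg estimate \eqref{u2k}; once it is in hand, \eqref{D06} and \eqref{D08} reduce to power-series bookkeeping, and this is how I would organize the argument. For \eqref{u2k} itself I would invoke \cite[Lemma 2.4]{ctcv}, established by a careful Gagliardo--Nirenberg argument with the constants tracked, which ultimately rests on the explicit form of the Trudinger--Moser constant on $\R^2$ from \cite{Cao}. The point to stress is that the right-hand side of \eqref{u2k} is \emph{engineered} for the computation below: against a factor $1/k!$, the summand $\frac{k!}{2\pi^{k-1}}\|\nabla u\|_2^{2k}$ leaves a bare geometric series whose radius of convergence in the variable $\|\nabla u\|_2^2$ is exactly $\pi$, whereas the summand $\frac{2+2^{2k-1}(k-2)}{(k-2)\pi^{k-1}}\|u\|_2^k\|\nabla u\|_2^k$ leaves an everywhere-convergent power series.

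To get \eqref{D06} I would first note that $e^{u^2}-1-u^2\ge 0$ is integrable by Lemma~\ref{lem 2.3}(i), so monotone convergence applied to $e^t-1-t=\sum_{k\ge 2}t^k/k!$ yields $\int_{\R^2}(e^{u^2}-1-u^2)\,\mathrm{d}x=\sum_{k\ge 2}\|u\|_{2k}^{2k}/k!$. I would keep the term $k=2$, equal to $\frac12\|u\|_4^4$, and apply \eqref{u2k} to every term with $k\ge 3$. The $\|\nabla u\|_2^{2k}$-contributions then sum to the geometric series $\frac12\sum_{k\ge 3}\|\nabla u\|_2^{2k}/\pi^{k-1}=\|\nabla u\|_2^6/\bigl(2\pi(\pi-\|\nabla u\|_2^2)\bigr)$, which converges precisely because $\|\nabla u\|_2^2<\pi$. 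For the $\|u\|_2^k\|\nabla u\|_2^k$-contributions I would substitute $k=j+3$: a direct computation gives $2+2^{2k-1}(k-2)=2\bigl[4^{j+2}(j+1)+1\bigr]$, $(k-2)\pi^{k-1}=(j+1)\pi^{j+2}$, $k!=(j+3)!$, and $\|u\|_2^{j+3}\|\nabla u\|_2^{j+3}=c^{3/2}\|\nabla u\|_2^3(\|u\|_2\|\nabla u\|_2)^j$ since $\|u\|_2^2=c$ on $\mathcal{S}_c$, so these contributions sum to $\frac{2c^{3/2}\|\nabla u\|_2^3}{\pi^2}\sum_{j\ge 0}\frac{4^{j+2}(j+1)+1}{(j+1)(j+3)!}\bigl(\frac{\|u\|_2\|\nabla u\|_2}{\pi}\bigr)^j$ (convergent for every value of $\|u\|_2\|\nabla u\|_2$, by a quotient test). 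Adding the three pieces recovers \eqref{D06}.

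For \eqref{D08} I would run the same computation with $e^{u^2}-1-u^2$ replaced by the nonnegative integrand $(u^2-1)e^{u^2}+1$, using the expansion $(t-1)e^t+1=\sum_{k\ge 2}\frac{(k-1)t^k}{k!}$; the term $k=2$ still yields $\frac12\|u\|_4^4$. The only effect of the extra factor $k-1$ is twofold: first, the gradient series becomes $\frac{\pi}{2}\sum_{k\ge 3}(k-1)\bigl(\frac{\|\nabla u\|_2^2}{\pi}\bigr)^k=\frac{(2\pi-\|\nabla u\|_2^2)\|\nabla u\|_2^6}{2\pi(\pi-\|\nabla u\|_2^2)^2}$, still of radius of convergence $\pi$ (via $\sum_{k\ge 3}(k-1)x^k=\frac{x^3(2-x)}{(1-x)^2}$); second, it inserts the factor $k-1=j+2$ into the other series, giving $\frac{2c^{3/2}\|\nabla u\|_2^3}{\pi^2}\sum_{j\ge 0}\frac{(j+2)[4^{j+2}(j+1)+1]}{(j+1)(j+3)!}\bigl(\frac{\|u\|_2\|\nabla u\|_2}{\pi}\bigr)^j$. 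Adding up recovers \eqref{D08}.

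The main obstacle, and essentially all the mathematical content, is \eqref{u2k} with exactly those constants: the mechanism above works only because the gradient part produces a geometric series of radius of convergence $\pi$ in $\|\nabla u\|_2^2$, which is what forces the standing hypothesis $\|\nabla u\|_2^2<\pi$ here and, further on in the paper, the explicit smallness conditions on $c$. Granting \eqref{u2k}, the remaining steps are routine, the only slightly delicate points being the justification of the termwise integration, the reindexing $k=j+3$, and the two closed-form evaluations of the geometric-type series.
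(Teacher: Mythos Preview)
Your proposal is correct and follows exactly the natural route: the paper itself offers no proof but simply quotes the entire lemma from \cite[Lemma~2.4]{ctcv}, so your derivation of \eqref{D06} and \eqref{D08} from \eqref{u2k} via termwise integration, the reindexing $k=j+3$, and the closed-form summation of the two geometric-type series is precisely the argument one finds in the cited reference. There is nothing to add or correct.
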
	
	\par
	
	In view of \cite[Proposition 2.3]{BHS}, we have the following lemma.
	
	\begin{lemma}\label{Lep}
		If $(u,\la_c)\in   H^1(\R^2)\times \R$ is a weak solution to \eqref{Pa1}, then $\mathcal{P}(u)=0$,
		where $\mathcal{P}$ is defined by \eqref{Pu}.
	\end{lemma}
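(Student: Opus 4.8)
The plan is to derive the Pohozaev identity by combining the weak-formulation test $\langle\Phi'(u),u\rangle=0$ (which holds since $(u,\lambda_c)$ solves \eqref{Pa1}, i.e. $\Phi'(u)+\lambda_c u=0$ after incorporating the constraint, so $\langle\Phi'(u),u\rangle=-\lambda_c\|u\|_2^2$) with the scaling/dilation variation $\frac{\mathrm d}{\mathrm dt}\Phi(u_t)\big|_{t=1}$ along $u_t(x)=u(tx)$, together with the exact first-order identity $\langle\Phi'(u),v\rangle=-\lambda_c\langle u,v\rangle$ applied with $v=x\cdot\nabla u$ (formally). The key point is that $\|u_t\|_2^2=t^{-2}\|u\|_2^2$ is not constant, so the Lagrange-multiplier term does not disappear; instead the combination is arranged exactly so that $\lambda_c$ and the $u^2$-type terms cancel, leaving precisely $\mathcal P(u)$ as defined in \eqref{Pu}. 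Concretely, I would first record how each of the four pieces of $\Phi$ transforms under $u\mapsto u_t$: $\|\nabla u_t\|_2^2=\|\nabla u\|_2^2$ (dimension two, scale-invariant Dirichlet energy), the nonlocal term $B(u_t)=t^{-2}B(u)$ by a change of variables in \eqref{Bu} (the kernel $|x|^{-2}$ and the double integral combine to give the $t^{-2}$ homogeneity), $\int_{\R^2}(e^{u_t^2}-1-u_t^2)\,\mathrm dx=t^{-2}\int_{\R^2}(e^{u^2}-1-u^2)\,\mathrm dx$, and $\int_{\R^2}g(x)u_t\,\mathrm dx=\int_{\R^2}g(x)u(tx)\,\mathrm dx$, whose $t$-derivative at $t=1$ produces $\int_{\R^2}(\nabla g(x)\cdot x)u\,\mathrm dx$ after an integration by parts (using $g\in L^{4/3}$ and the Sobolev embedding to justify the manipulations).

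Next I would differentiate $t\mapsto\Phi(u_t)$ at $t=1$. Since $(u,\lambda_c)$ is a critical point of $\Phi$ on $\mathcal S_c$, there is a number $\lambda_c$ with $\Phi'(u)=-\lambda_c u$ in $H^{-1}$; then $\frac{\mathrm d}{\mathrm dt}\Phi(u_t)\big|_{t=1}=\langle\Phi'(u),\frac{\mathrm d}{\mathrm dt}u_t|_{t=1}\rangle=\langle\Phi'(u),x\cdot\nabla u\rangle=-\lambda_c\langle u,x\cdot\nabla u\rangle=-\lambda_c\cdot(-\|u\|_2^2)=\lambda_c\|u\|_2^2$, where $\langle u,x\cdot\nabla u\rangle=\tfrac12\int_{\R^2}x\cdot\nabla(u^2)\,\mathrm dx=-\|u\|_2^2$ in dimension two. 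On the other hand, differentiating the explicit expression for $\Phi(u_t)$ obtained in the previous paragraph gives
\[
\tfrac{\mathrm d}{\mathrm dt}\Phi(u_t)\big|_{t=1}
= -\,\tfrac12 B(u)+\tfrac12\!\int_{\R^2}\!\big(e^{u^2}-1-u^2\big)\,\mathrm dx
-\int_{\R^2}\!\big(g(x)+\nabla g(x)\cdot x\big)u\,\mathrm dx,
\]
after collecting the $t^{-2}$-homogeneous pieces (each contributing a factor $-2$ times one half) and the $g$-term. Wait — to pin down $\lambda_c\|u\|_2^2$ in terms of the other quantities I also need $\langle\Phi'(u),u\rangle=-\lambda_c\|u\|_2^2$, i.e. from \eqref{Phd1}: $-\lambda_c\|u\|_2^2=\|\nabla u\|_2^2+3B(u)-\int_{\R^2}(e^{u^2}-1)u^2\,\mathrm dx-\int_{\R^2}g(x)u\,\mathrm dx$. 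Substituting this for $\lambda_c\|u\|_2^2$ in the identity $\lambda_c\|u\|_2^2=\frac{\mathrm d}{\mathrm dt}\Phi(u_t)|_{t=1}$ and simplifying, the $B(u)$ coefficients combine to $1$, the exponential integrals recombine via $\int(e^{u^2}-1)u^2 = \int[(u^2-1)e^{u^2}+1] + \int(e^{u^2}-1-u^2)$ into $\int_{\R^2}[(u^2-1)e^{u^2}+1]\,\mathrm dx$, and the $g$-terms combine to $\int_{\R^2}[g(x)+\nabla g(x)\cdot x]u\,\mathrm dx$; the net result is exactly $\mathcal P(u)=0$ with $\mathcal P$ as in \eqref{Pu}.

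The main obstacle is rigor rather than structure: the dilation $u_t$ need not lie in $H^1_r$ with differentiable dependence on $t$ unless one knows enough regularity and decay of the weak solution, and the test function $x\cdot\nabla u$ is not a priori in $H^1_r(\R^2)$. I would handle this exactly as in \cite[Proposition 2.3]{BHS}: either invoke that proposition directly (the nonlocal structure here is identical, and the extra terms $(e^{u^2}-1)u$ and $g(x)$ are handled by the growth control of Lemma \ref{lem 2.3} and by $g\in L^{4/3}$ together with {\rm(G2)}, which guarantees $\nabla g\cdot x\in L^{4/3}$ so all the integrals above are finite), or carry out a standard cutoff/regularization argument — multiply by a cutoff $\eta(x/R)$, pass to the limit $R\to\infty$ using the decay estimate of Lemma \ref{lem 2.2} for radial functions — to justify the integration by parts and the differentiation under the integral sign. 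Once these analytic points are settled, the algebraic cancellation described above is routine and yields $\mathcal P(u)=0$.
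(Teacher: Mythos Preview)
Your strategy is the right one and matches what the paper does (it simply cites \cite[Proposition 2.3]{BHS}): derive a dilation identity and combine it with the Nehari relation $\langle\Phi'(u),u\rangle=-\lambda_c\|u\|_2^2$ so that $\lambda_c$ drops out and $\mathcal P(u)=0$ emerges. The rigor discussion (cutoff, Lemma~\ref{lem 2.2}, {\rm(G2)} for $\nabla g\cdot x\in L^{4/3}$) is also appropriate.

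However, several of your intermediate computations are wrong, and with the numbers you wrote the algebra does \emph{not} close up to $\mathcal P(u)=0$. First, the nonlocal term scales as $B(u_t)=t^{-4}B(u)$, not $t^{-2}$: in \eqref{Bu} the factor $u^2$ appears once outside and twice (through the square) inside, giving degree $6$ homogeneity in $u$, while the change of variables in $\R^2$ contributes $t^{-2}$; altogether $B(u_t)=t^{-4}B(u)$ (this is consistent with \eqref{hud11}, where $B(tu_t)=t^2B(u)$). Second, the $g$-term derivative is
\[
\frac{\mathrm d}{\mathrm dt}\Big|_{t=1}\!\int_{\R^2}g(x)u(tx)\,\mathrm dx
=\int_{\R^2}g(x)\,x\!\cdot\!\nabla u\,\mathrm dx
=-\int_{\R^2}\big[2g(x)+\nabla g(x)\!\cdot\!x\big]u\,\mathrm dx,
\]
so after the minus sign in $\Phi$ one gets $+\int_{\R^2}[2g+\nabla g\cdot x]u\,\mathrm dx$, not $-\int_{\R^2}[g+\nabla g\cdot x]u\,\mathrm dx$. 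With these corrections,
\[
\frac{\mathrm d}{\mathrm dt}\Phi(u_t)\Big|_{t=1}
=-2B(u)+\int_{\R^2}\!\big(e^{u^2}-1-u^2\big)\,\mathrm dx
+\int_{\R^2}\!\big[2g(x)+\nabla g(x)\!\cdot\!x\big]u\,\mathrm dx,
\]
and adding \eqref{Phd1} (i.e.\ $\langle\Phi'(u),u\rangle=-\lambda_c\|u\|_2^2$) to this identity, using $(e^{u^2}-1-u^2)-(e^{u^2}-1)u^2=-[(u^2-1)e^{u^2}+1]$, now gives exactly $\mathcal P(u)=0$ as in \eqref{Pu}. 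Please redo the arithmetic accordingly.
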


	\par
	Finally, we present several critical point theorems on the manifold.
	Before this, we give some necessary definitions and notations.
	
	\par
	Let $H$ be a real Hilbert space whose norm and scalar product will be denoted
	respectively by $\|\cdot\|_H$ and $(\cdot, \cdot)_H$. Let $E$ be a real Banach space with norm $\|\cdot\|_E$. We
	assume throughout this section that
	\begin{equation}\label{EH}
		E\hookrightarrow H \hookrightarrow E^*
	\end{equation}
	with continuous injections, where $E^*$ is the dual space of $E$. Thus $H$ is identified with its dual space.
	We will always assume in the sequel that $E$ and $H$ are infinite dimensional spaces. We consider the manifold
	\begin{equation}\label{MD}
		M:=\{u\in E: \|u\|_H=1\}.
	\end{equation}
	$M$ is the trace of the unit sphere of $H$ in $E$ and it is, in general, unbounded. Throughout the paper, $M$
	will be endowed with the topology inherited from $E$. Moreover $M$ is a submanifold of $E$ of codimension
	1 and its tangent space at a given point $u\in M$ can be considered as a closed subspace of $E$ of codimension 1, namely
	\begin{equation}\label{TM}
		T_uM:=\{v\in E: (u,v)_H=0\}.
	\end{equation}
	
	\par
	We consider a functional $\varphi: E\rightarrow \R$ which is of class $\mathcal{C}^1$ on $E$.
	We denote by $\varphi|_{M}$ the trace of $\varphi$ on $M$. Then $\varphi|_{M}$  is a $\mathcal{C}^1$
	functional on $M$, and for any $u\in M$,
	\begin{equation}\label{DM}
		\langle\varphi|_{M}'(u), v\rangle=\langle\varphi'(u), v\rangle, \ \  \forall \ v\in T_uM.
	\end{equation}
	In the sequel, for any $u\in M$, we define the norm $\left\|\varphi|_{M}'(u)\right\|$ by
	\begin{equation}\label{DMN}
		\left\|\varphi|_{M}'(u)\right\|=\sup_{v\in T_uM,\|v\|_E=1}|\langle\varphi'(u), v\rangle|.
	\end{equation}
	
	\par
	Let $E\times\R$ be equipped with the scalar product
	$$
	((u,\tau),(v,\sigma))_{E\times\R}:=(u,v)_{H}+\tau\sigma, \ \  \forall \ (u,\tau), (v,\sigma)\in E\times \R,
	$$
	and the corresponding norm
	$$
	\|(u,\tau)\|_{E\times\R}:=\sqrt{\|u\|^2_{H}+\tau^2}, \ \  \forall \ (u,\tau)\in E\times \R.
	$$
	Next, we consider a functional $\tilde{\varphi}: E\times\R\rightarrow \R$ which is of class $\mathcal{C}^1$
	on $E\times\R$. We denote by $\tilde{\varphi}|_{M\times\R}$ the trace of $\tilde{\varphi}$ on $M\times\R$.
	Then $\tilde{\varphi}|_{M\times\R}$  is a $\mathcal{C}^1$ functional on $M\times\R$, and for any $(u,\tau)
	\in M\times\R$,
	\begin{equation}\label{DMM}
		\langle\tilde{\varphi}|_{M\times\R}'(u,\tau), (v,\sigma)\rangle
		:=\langle\tilde{\varphi}'(u,\tau), (v,\sigma)\rangle, \ \  \forall \ (v,\sigma)\in \tilde{T}_{(u,\tau)}(M\times\R),
	\end{equation}
	where
	\begin{equation}\label{TMM}
		\tilde{T}_{(u,\tau)}(M\times\R):=\{(v,\sigma)\in E\times\R: (u,v)_H=0\}.
	\end{equation}
	In the sequel, for any $(u,\tau)\in M\times\R$, we define the norm $\left\|\tilde{\varphi}|_{M\times\R}'(u,\tau)\right\|$ by
	\begin{equation}\label{DMN}
		\left\|\tilde{\varphi}|_{M\times\R}'(u,\tau)\right\|=\sup_{(v,\sigma)\in \tilde{T}_{(u,\tau)}(M\times\R),\|(v,\sigma)\|_{E\times\R}=1}|\langle\tilde{\varphi}'(u,\tau),
		(v,\sigma)\rangle|.
	\end{equation}

	\begin{lemma}\label{lem 2.16} {\rm\cite{BL2}} Let $\{u_n\}\subset M$ be a bounded sequence in $E$. Then
		the following are equivalent:
		\begin{enumerate}[{\rm (i)}]
			\item $\|\varphi|_{M}'(u_n)\|\rightarrow 0$ as $n\rightarrow \infty${\rm;}
			\item $\varphi'(u_n)-\langle\varphi'(u_n),u_n\rangle u_n\rightarrow 0$ in $E'$ as $n\rightarrow \infty$.
		\end{enumerate}
	\end{lemma}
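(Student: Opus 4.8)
The plan is to use, at each point $u\in M$, the orthogonal splitting of $E$ induced by the scalar product $(\cdot,\cdot)_H$. Since $M\subset E$, every $u\in M$ lies in $E$ with $\|u\|_H=1$, so for any $w\in E$ the element $P_uw:=w-(u,w)_Hu$ again belongs to $E$ and satisfies $(u,P_uw)_H=(u,w)_H-(u,w)_H\|u\|_H^2=0$, i.e. $P_uw\in T_uM$. Using the continuous embedding $E\hookrightarrow H$, say $\|v\|_H\le C_0\|v\|_E$ for all $v\in E$, together with $\|u\|_H=1$, I would record the elementary bound $\|P_uw\|_E\le\|w\|_E+|(u,w)_H|\,\|u\|_E\le(1+C_0^2\|u\|_E)\|w\|_E$; thus, on a bounded subset of $E$ the family of projections $P_u$ is uniformly bounded in operator norm. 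This single estimate is essentially the only ingredient of the proof.

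For the implication (ii)$\Rightarrow$(i) — which in fact needs no boundedness — I would take $v\in T_{u_n}M$ with $\|v\|_E=1$; since $(u_n,v)_H=0$ and $u_n$ acts on $E$ through the identification $H\hookrightarrow E'$ by $\langle u_n,v\rangle=(u_n,v)_H$, one has $\langle\varphi'(u_n)-\langle\varphi'(u_n),u_n\rangle u_n,v\rangle=\langle\varphi'(u_n),v\rangle$, whence $|\langle\varphi'(u_n),v\rangle|\le\|\varphi'(u_n)-\langle\varphi'(u_n),u_n\rangle u_n\|_{E'}$; taking the supremum over all such $v$ gives $\|\varphi|_M'(u_n)\|\le\|\varphi'(u_n)-\langle\varphi'(u_n),u_n\rangle u_n\|_{E'}\to 0$.

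For (i)$\Rightarrow$(ii), I would fix $v\in E$ with $\|v\|_E=1$ and set $w:=P_{u_n}v=v-(u_n,v)_Hu_n\in T_{u_n}M$; a one-line computation gives $\langle\varphi'(u_n)-\langle\varphi'(u_n),u_n\rangle u_n,v\rangle=\langle\varphi'(u_n),w\rangle$, which is $0$ when $w=0$ and otherwise bounded by $\|\varphi|_M'(u_n)\|\,\|w\|_E$ by definition of the norm on $T_{u_n}M$. Since $\{u_n\}$ is bounded in $E$, the displayed estimate yields $\|w\|_E\le K$ with $K$ independent of $n$ and of $v$ in the unit sphere of $E$; taking the supremum over such $v$ then gives $\|\varphi'(u_n)-\langle\varphi'(u_n),u_n\rangle u_n\|_{E'}\le K\,\|\varphi|_M'(u_n)\|\to 0$, which is (ii).

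The only point that needs any care — the ``hard part'', such as it is — is the uniform control of $\|w\|_E=\|v-(u_n,v)_Hu_n\|_E$ as $v$ runs over the unit sphere of $E$ and $n\to\infty$; this is precisely where the hypothesis that $\{u_n\}$ is bounded in $E$, together with the continuity of $E\hookrightarrow H$, enters, and it is what makes the nontrivial direction (i)$\Rightarrow$(ii) work. Everything else is bookkeeping with the definitions of $T_uM$, of $\|\varphi|_M'(\cdot)\|$, and of the identification $H\hookrightarrow E'$.
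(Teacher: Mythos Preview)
Your argument is correct. The paper does not supply its own proof of this lemma; it simply quotes the result from \cite{BL2} and uses it as a black box, so there is nothing to compare your approach against. The projection $P_{u_n}v=v-(u_n,v)_H u_n$ onto $T_{u_n}M$ and the uniform bound on $\|P_{u_n}\|$ coming from the boundedness of $\{u_n\}$ in $E$ together with $E\hookrightarrow H$ are exactly the standard ingredients. One trivial slip: in your displayed estimate you should have either $(1+C_0\|u\|_E)\|w\|_E$ (using $\|u\|_H=1$) or $(1+C_0^2\|u\|_E^2)\|w\|_E$ (not using it); the factor $C_0^2\|u\|_E$ is a typo, but of course it has no effect on the conclusion.
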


	\begin{lemma}\label{lem 2.17}{\rm\cite{CTS}}
		Let $\varphi\in \mathcal{C}^1(E,\R)$ and $K\subset E$. If there exists $\rho>0$ such that
		\begin{equation}\label{D50}
			a:=\inf_{v\in M\cap K}\varphi(v)
			<b:=\inf_{v\in M\cap (K_{\rho}\setminus K)}\varphi(v),
		\end{equation}
		where  $K_{\delta}:=\{v\in E:\|u-v\|<\delta, \ \forall \ u\in K\}$, then for every $\varepsilon\in (0,(b-a)/2)$, $\delta\in (0,\rho/2)$ and $w\in M\cap A$ such that
		\begin{equation}\label{D52}
			\varphi(w)\le a+\varepsilon,
		\end{equation}
		there exists $u\in M$ such that
		\begin{enumerate}[{\rm (i)}]
			\item $a-2\varepsilon\le \varphi(u)\le a+2\varepsilon${\rm;}
			\item $\|u-w\|\le 2\delta${\rm;}
			\item $\left\|\varphi|_{M}'(u)\right\|\le 8\varepsilon/\delta$.
		\end{enumerate}
	\end{lemma}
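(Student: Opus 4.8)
\emph{Proof proposal.} The statement is a localized, quantitative form of Ekeland's variational principle adapted to the constraint manifold $M$, and the plan is to realize it in three moves: localize $\varphi$ on a complete metric space on which $a$ is the \emph{global} infimum, apply Ekeland's principle to the almost-minimizer $w$, and convert the resulting metric slope estimate into the intrinsic bound on $\left\|\varphi|_{M}'(u)\right\|$. Throughout I read $w\in M\cap K$, so that $\varphi(w)\le a+\varepsilon$ says $w$ is an $\varepsilon$-minimizer over $M\cap K$.

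First I would set up the ambient complete space. Since $E\hookrightarrow H$ by \eqref{EH}, the $H$-norm is continuous on $E$, so $M$ in \eqref{MD} is closed in the Banach space $E$, hence a complete metric space for $d(u,v)=\|u-v\|_E$. Put $Y:=M\cap\overline{K_\rho}$, a closed and therefore complete subset of $M$. I claim $\inf_Y\varphi=a$ with $\varphi\ge a$ on $Y$: every $v\in Y\cap K$ satisfies $\varphi(v)\ge a$ by the definition of $a$ in \eqref{D50}, while every $v\in Y\setminus K$ lies in $\overline{K_\rho}\setminus K$, i.e.\ it is a point or a limit of points of $M\cap(K_\rho\setminus K)$, so $\varphi(v)\ge b>a$ by continuity of $\varphi$ and the definition of $b$. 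Since $M\cap K\subseteq Y$, the infimum equals the finite number $a$; being continuous, $\varphi$ is in particular lower semicontinuous and bounded below on $Y$, and $w\in M\cap K\subseteq Y$ with $\varphi(w)\le a+\varepsilon=\inf_Y\varphi+\varepsilon$.

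Next I would invoke the strong form of Ekeland's variational principle on $(Y,d)$ with the $\varepsilon$-minimizer $w$ and radius $\lambda=2\delta$. This yields $u\in Y$ with $\varphi(u)\le\varphi(w)\le a+\varepsilon$, with $\|u-w\|_E\le 2\delta$, and with the slope inequality
\begin{equation*}
\varphi(v)>\varphi(u)-\frac{\varepsilon}{2\delta}\,\|v-u\|_E\qquad\text{for all }v\in Y,\ v\ne u.
\end{equation*}
Combined with $\varphi\ge a$ on $Y$ this gives $a-2\varepsilon\le\varphi(u)\le a+2\varepsilon$, which is (i), and (ii) is precisely $\|u-w\|_E\le 2\delta$. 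The hypothesis $a<b$ enters decisively here: since $\varphi(u)\le a+\varepsilon<a+(b-a)/2<b$ while $\varphi\ge b$ on $M\cap(K_\rho\setminus K)$ and, by continuity, on its closure, the point $u$ cannot sit in the collar; as $u\in Y=M\cap\overline{K_\rho}$ this forces $u\in M\cap K$, so $\mathrm{dist}(u,K)=0$, leaving the full collar width $\rho$ available for admissible variations.

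Finally I would transfer the slope inequality to (iii). Fix $h\in T_uM$ with $\|h\|_E=1$, i.e.\ $(u,h)_H=0$ by \eqref{TM}, and set the norm-preserving retraction $v_t:=(u+th)/\|u+th\|_H\in M$, well defined since $\|u+th\|_H^2=1+t^2\|h\|_H^2\ge1$. A direct expansion gives $v_t=u+th+O(t^2)$ in $E$, so for small $t\ne0$ one has $v_t\ne u$, $\|v_t-u\|_E/t\to\|h\|_E=1$, and $\mathrm{dist}(v_t,K)\le\|v_t-u\|_E<\rho$, whence $v_t\in K_\rho\subseteq\overline{K_\rho}$ and thus $v_t\in Y$. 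Inserting $v=v_t$ with $t>0$ into the slope inequality, dividing by $t$ and letting $t\to0^+$ gives $\langle\varphi'(u),h\rangle\ge-\varepsilon/(2\delta)$, because $\tfrac{d}{dt}\varphi(v_t)|_{t=0}=\langle\varphi'(u),h\rangle$. Replacing $h$ by $-h$ yields $|\langle\varphi'(u),h\rangle|\le\varepsilon/(2\delta)$ for every unit $h\in T_uM$, and taking the supremum in the definition of $\left\|\varphi|_{M}'(u)\right\|$ produces $\left\|\varphi|_{M}'(u)\right\|\le\varepsilon/(2\delta)\le 8\varepsilon/\delta$, establishing (iii). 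The main obstacle is the localization step: one must guarantee that the Ekeland point does not leak into the collar, which is exactly what the strict energy gap $a<b$ secures; once $u\in M\cap K$ is pinned down with the full width $\rho$ to spare, the passage from the metric slope bound to the intrinsic gradient via the retraction is routine (and in fact yields the sharper constant $\varepsilon/(2\delta)$).
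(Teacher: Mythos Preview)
The paper does not prove this lemma; it is quoted from \cite{CTS} without argument, so there is no in-paper proof to compare against. Your strategy---localize to a closed subset of $M$, apply Ekeland's variational principle to the almost-minimizer $w$, then convert the metric slope bound into a tangential gradient estimate via the radial retraction $v_t=(u+th)/\|u+th\|_H$---is the standard route and is essentially correct; it even delivers the sharper constant $\|\varphi|_M'(u)\|\le \varepsilon/(2\delta)$.

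There is, however, one genuine soft spot. In establishing $\inf_Y\varphi=a$ on $Y=M\cap\overline{K_\rho}$ you assert that every $v\in Y\setminus K$ ``is a point or a limit of points of $M\cap(K_\rho\setminus K)$'', and then invoke continuity of $\varphi$ to get $\varphi(v)\ge b$. This step is not justified: a boundary point $v\in M\cap\partial K_\rho$ need not be approximable from within $M\cap(K_\rho\setminus K)$, so nothing in the hypotheses forces $\varphi(v)\ge b$ there. Without that lower bound you lose both the boundedness-below of $\varphi$ on $Y$ and the identification $\inf_Y\varphi=a$, and Ekeland's principle no longer applies with the parameters you want. The repair is painless: work instead with $Y:=M\cap\overline{K_{\rho'}}$ for any fixed $\rho'\in(2\delta,\rho)$. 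Since $\overline{K_{\rho'}}\subset\{v:\mathrm{dist}(v,K)\le\rho'\}\subset K_\rho$, every $v\in Y\setminus K$ now lies in $M\cap(K_\rho\setminus K)$ outright, so $\varphi(v)\ge b$ holds directly and $\inf_Y\varphi=a$ follows. The remainder of your argument is unaffected, because once $u\in M\cap K$ is secured the curves $v_t$ satisfy $\mathrm{dist}(v_t,K)\le\|v_t-u\|_E\to 0$ and hence enter $K_{\rho'}$ for small $t$.
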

	
	\begin{corollary}\label{cor 2.16}
		Let $\varphi\in \mathcal{C}^1(E,\R)$ and $A\subset E$. If there exist $\rho>0$ and $\bar{u}\in M\cap A$ such that
		\begin{equation}\label{D50}
			\varphi(\bar{u})=\inf_{v\in M\cap A}\varphi(v)
			<\inf_{v\in M\cap (A_{\rho}\setminus A)}\varphi(v),
		\end{equation}
		then $\varphi|_{M}'(\bar{u})=0$.
	\end{corollary}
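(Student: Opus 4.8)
The plan is to deduce Corollary \ref{cor 2.16} directly from Lemma \ref{lem 2.17} by taking $K=A$ and exploiting the fact that the infimum over $M\cap A$ is actually attained at $\bar{u}$. First I would set $a:=\inf_{v\in M\cap A}\varphi(v)=\varphi(\bar{u})$ and $b:=\inf_{v\in M\cap(A_{\rho}\setminus A)}\varphi(v)$; the hypothesis \eqref{D50} gives $a<b$, so $b-a>0$ and we may pick any sequences $\varepsilon_n\downarrow 0$ and $\delta_n\downarrow 0$ with $\varepsilon_n\in(0,(b-a)/2)$ and $\delta_n\in(0,\rho/2)$, and moreover arrange $\varepsilon_n/\delta_n\to 0$ (for instance $\varepsilon_n=\delta_n^2$ eventually, or simply $\delta_n=\sqrt{\varepsilon_n}$). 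Since $\bar{u}\in M\cap A$ and $\varphi(\bar{u})=a\le a+\varepsilon_n$, the point $w=\bar{u}$ is an admissible choice in Lemma \ref{lem 2.17} for every $n$ (with $A$ in the role of the set "$A$" in that lemma, which we may take equal to $K=A$).

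Applying Lemma \ref{lem 2.17} with these parameters yields, for each $n$, a point $u_n\in M$ satisfying
\begin{enumerate}[{\rm (i)}]
\item $a-2\varepsilon_n\le \varphi(u_n)\le a+2\varepsilon_n$;
\item $\|u_n-\bar{u}\|\le 2\delta_n$;
\item $\left\|\varphi|_{M}'(u_n)\right\|\le 8\varepsilon_n/\delta_n$.
\end{enumerate}
From (ii) and $\delta_n\to 0$ we get $u_n\to \bar{u}$ in $E$; from (iii) together with $\varepsilon_n/\delta_n\to 0$ we get $\left\|\varphi|_{M}'(u_n)\right\|\to 0$. Finally, since $\varphi\in\mathcal{C}^1(E,\R)$, the map $u\mapsto \varphi'(u)$ is continuous from $E$ to $E'$, and one checks that the functional $u\mapsto \left\|\varphi|_{M}'(u)\right\|$ is lower semicontinuous (indeed continuous) along sequences converging in $M$: if $u_n\to\bar{u}$ in $E$ with $u_n,\bar{u}\in M$, then for any fixed $v\in T_{\bar{u}}M$ one can produce $v_n\in T_{u_n}M$ with $v_n\to v$ in $E$ (project $v$ onto $T_{u_n}M$, using that $(u_n,v)_H\to(\bar{u},v)_H=0$), whence $\langle\varphi'(u_n),v_n\rangle\to\langle\varphi'(\bar{u}),v\rangle$. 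Passing to the limit in (iii) gives $\left\|\varphi|_{M}'(\bar{u})\right\|\le 0$, i.e. $\varphi|_{M}'(\bar{u})=0$.

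The only slightly delicate point—and the step I expect to require the most care—is the continuity of $u\mapsto\left\|\varphi|_{M}'(u)\right\|$ under $E$-convergence on $M$, because the tangent spaces $T_{u_n}M$ vary with $n$ and the supremum in \eqref{DMN} is taken over unit vectors in these moving subspaces. The clean way around this is to invoke Lemma \ref{lem 2.16}: the sequence $\{u_n\}$ is bounded in $E$ (it converges), so (iii) together with Lemma \ref{lem 2.16} is equivalent to $\varphi'(u_n)-\langle\varphi'(u_n),u_n\rangle u_n\to 0$ in $E'$; by continuity of $\varphi'$ and of the pairing, the left-hand side converges to $\varphi'(\bar{u})-\langle\varphi'(\bar{u}),\bar{u}\rangle\bar{u}$, which is therefore $0$, and applying Lemma \ref{lem 2.16} once more (to the constant sequence $\bar{u}$, or directly from \eqref{DM}) gives $\left\|\varphi|_{M}'(\bar{u})\right\|=0$, as claimed. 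This routes the entire argument through results already established in the excerpt and avoids any ad hoc tangent-space projection estimate.
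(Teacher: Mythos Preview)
Your argument is correct and is precisely the intended derivation: the paper states Corollary~\ref{cor 2.16} without proof, as an immediate consequence of Lemma~\ref{lem 2.17}. Your choice $w=\bar{u}$, $\varepsilon_n,\delta_n\to 0$ with $\varepsilon_n/\delta_n\to 0$, followed by passage to the limit via Lemma~\ref{lem 2.16} and the $\mathcal{C}^1$-regularity of $\varphi$, is exactly the standard way to extract the conclusion, and nothing more is needed.
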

	
	\begin{lemma}\label{lem 2.18}{\rm\cite{CTS}}
		Assume that $\tilde{\theta}\in \R$, $\tilde{\varphi}\in \mathcal{C}^1(E\times \R,\R)$ and $\tilde{\Upsilon}
		\subset M\times\R$ is a closed set. Let
		\begin{equation}\label{Gat0}
			\tilde{\Gamma}:=\left\{\tilde{\gamma}\in \mathcal{C}([0,1], M\times\R):  \tilde{\gamma}(0)\in \tilde{\Upsilon}, \ \tilde{\varphi}(\tilde{\gamma}(1)) < \tilde{\theta}\right\}.
		\end{equation}
		If $\tilde{\varphi}$ satisfies
		\begin{equation}\label{F40}
			\tilde{a}:=\inf_{\tilde{\gamma}\in \tilde{\Gamma}}\max_{t\in [0, 1]}\tilde{\varphi}(\tilde{\gamma}(t))
			>\tilde{b}:=\sup_{\tilde{\gamma}\in \tilde{\Gamma}}\max\left\{\tilde{\varphi}(\tilde{\gamma}(0)), \tilde{\varphi}(\tilde{\gamma}(1))\right\},
		\end{equation}
		then, for every $\varepsilon\in (0,(\tilde{a}-\tilde{b})/2)$, $\delta>0$ and $\tilde{\gamma}_*\in \tilde{\Gamma}$
		such that
		\begin{equation}\label{F42}
			\sup_{t\in [0, 1]}\tilde{\varphi}(\tilde{\gamma}_*(t))\le \tilde{a}+\varepsilon,
		\end{equation}
		there exists $(v,\tau)\in M\times\R$ such that
		\begin{enumerate}[{\rm (i)}]
			\item $\tilde{a}-2\varepsilon\le \tilde{\varphi}(v,\tau)\le \tilde{a}+2\varepsilon$;
			\item $\min_{t\in [0,1]}\|(v,\tau)-\tilde{\gamma}_*(t)\|_{E\times\R}\le 2\delta$;
			\item $\left\|\tilde{\varphi}|_{M\times\R}'(v,\tau)\right\|\le \f{8\varepsilon}{\delta}$.
		\end{enumerate}
	\end{lemma}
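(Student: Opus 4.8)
The plan is to combine the Ekeland variational principle on a space of paths with a quantitative deformation argument on the $C^1$ manifold $M\times\R$; this is the standard route for minimax principles on manifolds, and the only genuine work lies in tracking the explicit constants $2\varepsilon$, $2\delta$, $8\varepsilon/\delta$. Fix $\varepsilon\in(0,(\tilde a-\tilde b)/2)$, $\delta>0$ and $\tilde\gamma_*\in\tilde\Gamma$ as in \eqref{F42}. First I would fix a complete ambient space: let $\mathscr G$ be the closure of $\tilde\Gamma$ in $C([0,1],M\times\R)$ for the uniform metric $\mathrm d_\infty(\gamma,\eta):=\max_{t\in[0,1]}\|\gamma(t)-\eta(t)\|_{E\times\R}$. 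Since $\|\cdot\|_H$ is continuous on $E$ by \eqref{EH}, $M$ is closed in $E$, so $C([0,1],M\times\R)$ and hence $\mathscr G$ are complete. Every $\gamma\in\mathscr G$ satisfies $\gamma(0)\in\tilde\Upsilon$ and, by the definition of $\tilde b$ in \eqref{F40}, $\tilde\varphi(\gamma(0)),\tilde\varphi(\gamma(1))\le\tilde b$. Setting $J(\gamma):=\max_{t\in[0,1]}\tilde\varphi(\gamma(t))$, the map $\gamma\mapsto\tilde\varphi\circ\gamma$ is continuous from $\mathscr G$ into $C([0,1],\R)$, so $J$ is continuous on $\mathscr G$; since $\tilde\Gamma$ is dense in $\mathscr G$ we obtain $\inf_{\mathscr G}J=\inf_{\tilde\Gamma}J=\tilde a$, and by \eqref{F42} $J(\tilde\gamma_*)\le\tilde a+\varepsilon=\inf_{\mathscr G}J+\varepsilon$.

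Next I would apply the Ekeland variational principle to $J$ on $(\mathscr G,\mathrm d_\infty)$ with radius $\delta$: there exists $\bar\gamma\in\mathscr G$ with $\tilde a\le J(\bar\gamma)\le\tilde a+\varepsilon$, $\mathrm d_\infty(\bar\gamma,\tilde\gamma_*)\le\delta$, and $J(\eta)>J(\bar\gamma)-(\varepsilon/\delta)\,\mathrm d_\infty(\eta,\bar\gamma)$ for every $\eta\in\mathscr G\setminus\{\bar\gamma\}$. Let $\mathcal M:=\{\bar\gamma(t):t\in[0,1],\ \tilde\varphi(\bar\gamma(t))=J(\bar\gamma)\}$, a nonempty compact set; each $w\in\mathcal M$ has $\tilde\varphi(w)\in[\tilde a,\tilde a+\varepsilon]$ and $\min_{s\in[0,1]}\|w-\tilde\gamma_*(s)\|_{E\times\R}\le\mathrm d_\infty(\bar\gamma,\tilde\gamma_*)\le\delta$, so $w$ already satisfies (i) and (ii). It therefore suffices to exhibit one $w=(v,\tau)\in\mathcal M$ with $\|\tilde\varphi|_{M\times\R}'(v,\tau)\|\le 8\varepsilon/\delta$, and this will be done by contradiction and deformation.

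Assume $\|\tilde\varphi|_{M\times\R}'(w)\|>8\varepsilon/\delta$ for every $w\in M\times\R$ with $\tilde\varphi(w)\in[\tilde a-2\varepsilon,\tilde a+2\varepsilon]$ and $\mathrm{dist}(w,\tilde\gamma_*([0,1]))\le2\delta$. Because $\varepsilon<(\tilde a-\tilde b)/2$, we have $\tilde\varphi(\bar\gamma(0)),\tilde\varphi(\bar\gamma(1))\le\tilde b<\tilde a-2\varepsilon$, so $\mathcal M$ stays away from $t=0,1$ and $\tilde\varphi(\bar\gamma(\cdot))<\tilde a-2\varepsilon$ near the endpoints. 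On the open region just described, using the definitions \eqref{DMM}–\eqref{TMM} of the tangential differential (and, if convenient, the $M\times\R$-analogue of Lemma \ref{lem 2.16}), one constructs by the usual partition-of-unity argument a locally Lipschitz tangent pseudo-gradient field for $\tilde\varphi|_{M\times\R}$; composing its negative flow with the smooth radial retraction $(u,\tau)\mapsto(u/\|u\|_H,\tau)$ onto $M\times\R$ and inserting a cutoff supported in that region and equal to $1$ near $\mathcal M$, one produces a continuous map $\Theta:M\times\R\to M\times\R$ with $\tilde\varphi\circ\Theta\le\tilde\varphi$, with $\|\Theta(w)-w\|_{E\times\R}<\delta/2$ for all $w$, with $\Theta=\mathrm{id}$ wherever $\tilde\varphi<\tilde a-2\varepsilon$, and with $\tilde\varphi(\Theta(w))\le\tilde\varphi(w)-\varepsilon$ whenever $\tilde\varphi(w)\ge J(\bar\gamma)-\varepsilon/2$ and $\mathrm{dist}(w,\tilde\gamma_*([0,1]))\le2\delta$; the flow time and the pseudo-gradient normalization are chosen precisely so that an orbit of length $<\delta/2$ yields an energy drop $>\varepsilon$, which is exactly where the threshold $8\varepsilon/\delta$ enters. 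Set $\eta:=\Theta\circ\bar\gamma$. Then $\eta(0)=\bar\gamma(0)\in\tilde\Upsilon$ and $\tilde\varphi(\eta(1))=\tilde\varphi(\bar\gamma(1))\le\tilde\theta$; writing $\bar\gamma$ as a uniform limit of paths $\gamma_n\in\tilde\Gamma$ (endpoints below level $\tilde a-2\varepsilon$, hence fixed by $\Theta$) we see $\Theta\circ\gamma_n\in\tilde\Gamma$, so $\eta=\lim_n\Theta\circ\gamma_n\in\mathscr G$. Splitting $[0,1]$ according to whether $\tilde\varphi(\bar\gamma(t))\ge J(\bar\gamma)-\varepsilon/2$ gives $J(\eta)\le J(\bar\gamma)-\varepsilon/2$, while $\mathrm d_\infty(\eta,\bar\gamma)<\delta/2$ gives $(\varepsilon/\delta)\mathrm d_\infty(\eta,\bar\gamma)<\varepsilon/2$ and $\eta\ne\bar\gamma$. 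Hence $J(\eta)<J(\bar\gamma)-(\varepsilon/\delta)\mathrm d_\infty(\eta,\bar\gamma)$, contradicting the Ekeland inequality. So some $(v,\tau)\in\mathcal M$ satisfies $\|\tilde\varphi|_{M\times\R}'(v,\tau)\|\le8\varepsilon/\delta$, and it fulfils (i)–(iii).

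I expect the deformation step to be the main obstacle: one must build a flow that stays on the curved manifold $M\times\R$, never increases $\tilde\varphi$, equals the identity near the endpoints of $\bar\gamma$ (so $\eta$ remains admissible), and lowers $\tilde\varphi$ by a controlled amount near $\mathcal M$, all while keeping the displacement below $\delta/2$; the delicate part is then the bookkeeping of the cutoff bands and the flow time needed to make the slope threshold come out to exactly $8\varepsilon/\delta$ and items (i)–(ii) to $\pm2\varepsilon$ and $2\delta$. A secondary point requiring care is that the endpoint constraint $\tilde\varphi(\gamma(1))<\tilde\theta$ in \eqref{Gat0} is not closed; this is handled above by working in $\overline{\tilde\Gamma}$, checking $\inf_{\overline{\tilde\Gamma}}J=\tilde a$ by density and continuity of $J$, and observing that the downward deformation preserves membership in $\overline{\tilde\Gamma}$ precisely because the endpoints lie strictly below level $\tilde a-2\varepsilon$ and are left fixed by $\Theta$.
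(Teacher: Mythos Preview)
The paper does not prove Lemma~\ref{lem 2.18}; it is quoted verbatim from \cite{CTS} and used as a black box, so there is no in-paper argument to compare against. Your proposal is the standard and correct route to such a minimax principle on a $\mathcal{C}^1$ manifold: Ekeland's principle on the path space, followed by a quantitative deformation via a tangent pseudo-gradient, with the constants tuned so that the slope threshold comes out to $8\varepsilon/\delta$. This is essentially how the result is proved in \cite{CTS} (and in the related constrained-minimax literature going back to Ghoussoub and to Willem's quantitative deformation lemma), so your approach is not genuinely different from the source the paper cites.

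Two minor remarks on execution. First, your handling of the non-closed endpoint constraint $\tilde\varphi(\tilde\gamma(1))<\tilde\theta$ by passing to $\mathscr G=\overline{\tilde\Gamma}$ is fine, but note that for the Ekeland step you only need $\inf_{\mathscr G}J=\tilde a$ and that $\eta=\Theta\circ\bar\gamma\in\mathscr G$; both follow exactly as you say, since $\tilde\Upsilon$ is closed and the endpoints of any $\gamma_n\in\tilde\Gamma$ lie strictly below $\tilde a-2\varepsilon$ and are fixed by $\Theta$. Second, the bookkeeping you flag is indeed the only real work: with a pseudo-gradient $V$ satisfying $\|V\|\le 2$ and $\langle\tilde\varphi|_{M\times\R}'(w),V(w)\rangle\ge\|\tilde\varphi|_{M\times\R}'(w)\|$, flowing for time $\delta/4$ gives displacement $\le\delta/2$ and energy drop $\ge(\delta/4)\cdot(8\varepsilon/\delta)=2\varepsilon$ on the relevant band, which is more than enough to contradict the Ekeland inequality with slope $\varepsilon/\delta$. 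The retraction onto $M\times\R$ is unnecessary if $V$ is built tangent from the start, but either device works.
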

	
{\section{Proof of Theorem \ref{thm 1.1}}}
 \setcounter{equation}{0}

	\par
	 In this section, we consider the existence of a local minima for $\Phi$ in the set $\mathcal{S}_c\cap A_{s_0}$ and give the proof of Theorem \ref{thm 1.1}.
	\par
	First, we
	define a function $\zeta(t)$ as follows:
	\begin{equation}\label{xit}
		\zeta(t):=\sum_{k=0}^{\infty}\frac{4^{k+2}(k+1)+1}{(k+1)(k+3)!}t^k, \ \ \ \ t\in \R.
	\end{equation}
	Assume that {\rm(G1)} holds, then there exist two numbers $c_1\in (0,2\pi]$ and  $c_2\in (0,2\pi]$  such that
	\begin{equation}\label{c1d}
		30c^{\frac{1}{4}}\mathcal{C}_4\|g\|_{\frac{4}{3}}
  \left[\frac{2c^{\frac{5}{4}}\zeta\left(\sqrt{\f{c}{3\pi}}\right)}{3\pi^2\mathcal{C}_4\|g\|_{\frac{4}{3}}}\right]^{\f{3}{5}}\le 1, \ \ \ \ \forall\ c\in (0,c_1]
	\end{equation}
	and
	\begin{equation}\label{c2d}
		12c^{\frac{1}{4}}\left(\frac{\pi}{3}\right)^{-\frac{3}{4}}\mathcal{C}_4\|g\|_{\frac{4}{3}}
		+\f{12c^{\f{3}{2}}\left(\frac{\pi}{3}\right)^{\frac{1}{2}}}{\pi^2}\zeta\left(\sqrt{\f{c}{3\pi}}\right)\le 1, \ \ \ \ \forall\ c\in (0,c_2],
	\end{equation}
	moreover, the equality in \eqref{c1d} and \eqref{c2d}  is attained if and only if $c=c_1$ and $c=c_2$, respectively.
	Let
	\begin{equation}\label{c0}
		c_0= \min\{c_1,c_2\}\ \ \hbox{and}\ \
		s_0=\min\left\{\left[\frac{3\pi^2\mathcal{C}_4\|g\|_{\frac{4}{3}}}{2c^{\frac{5}{4}}\zeta\left(\sqrt{\f{c}{3\pi}}\right)}\right]^{\f{4}{5}},
		\frac{\pi}{3}\right\}.
	\end{equation}
	
	\par
	
	For each $c>0$, we consider the function $\tilde{h}_c(s)$ defined on $s\in (0,+\infty)$ by
	\begin{align}\label{Gct}
		\tilde{h}_c(s)  :=   \f{1}{12}-c^{\frac{1}{4}}s^{-\frac{3}{4}}\mathcal{C}_4\|g\|_{\frac{4}{3}}
		-\f{c^{\f{3}{2}}s^{\frac{1}{2}}}{\pi^2}\zeta\left(\sqrt{\f{c}{3\pi}}\right).
	\end{align}
	The following statement will help us to establish  a structure of local minima of $\Phi$ on $\mathcal{S}_c$.
 \begin{lemma}\label{lem 3.1}
  Assume that {\rm(G1)} holds. Then for each $c\in (0,c_0]$, the function  $\tilde{h}_c(s)$ satisfies
 \begin{align}\label{Gcm}
  \max_{0<s\le \pi/3}\tilde{h}_c(s)=\tilde{h}_c\left(\min\left\{s_c,\frac{\pi}{3}\right\}\right)
   \left\{
   \begin{array}{ll}
     >0, \ \mbox{if}  & c<c_0, \\
     \ge 0, \ \mbox{if}  & c=c_0,
   \end{array}
 \right.
 \end{align}
  where
 \begin{align}\label{sc}
   s_c=\left[\frac{3\pi^2\mathcal{C}_4\|g\|_{\frac{4}{3}}}{2c^{\frac{5}{4}}\zeta\left(\sqrt{\f{c}{3\pi}}\right)}\right]^{\f{4}{5}}.
 \end{align}
 \end{lemma}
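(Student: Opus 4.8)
The plan is to analyze the function $\tilde{h}_c(s)$ on the interval $(0,\pi/3]$ directly by calculus. Writing
$$\tilde{h}_c(s)=\frac{1}{12}-a\,s^{-3/4}-b\,s^{1/2},\qquad a:=c^{1/4}\mathcal{C}_4\|g\|_{4/3}>0,\quad b:=\frac{c^{3/2}}{\pi^2}\zeta\!\left(\sqrt{\tfrac{c}{3\pi}}\right)>0,$$
the first step is to differentiate: $\tilde{h}_c'(s)=\frac{3}{4}a\,s^{-7/4}-\frac{1}{2}b\,s^{-1/2}$. Setting $\tilde{h}_c'(s)=0$ gives $s^{5/4}=\frac{3a}{2b}$, i.e. the unique critical point is exactly $s_c=\left(\frac{3a}{2b}\right)^{4/5}=\left[\frac{3\pi^2\mathcal{C}_4\|g\|_{4/3}}{2c^{5/4}\zeta(\sqrt{c/3\pi})}\right]^{4/5}$, matching \eqref{sc}. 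Since $\tilde{h}_c'(s)>0$ for $s<s_c$ and $\tilde{h}_c'(s)<0$ for $s>s_c$, the function is strictly increasing on $(0,s_c)$ and strictly decreasing on $(s_c,\infty)$. Therefore on the compact-from-the-right interval $(0,\pi/3]$ the maximum is attained at $\min\{s_c,\pi/3\}$: if $s_c\le \pi/3$ the max is $\tilde{h}_c(s_c)$, while if $s_c>\pi/3$ monotonicity on $(0,\pi/3]$ forces the max to be at the right endpoint $\pi/3$. This establishes the first equality in \eqref{Gcm}.

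Next I would pin down the sign, splitting into the two cases according to which of $s_c$, $\pi/3$ is smaller. In the case $s_c\le\pi/3$: from the definition of $c_0\le c_1$ and \eqref{c1d}, a short computation shows $\tilde{h}_c(s_c)\ge 0$ with equality iff $c=c_1$. Indeed, plugging $s=s_c$ into $\tilde{h}_c$ and using $s_c^{5/4}=3a/(2b)$ one reduces $a\,s_c^{-3/4}+b\,s_c^{1/2}$ to a single power of $s_c$ (namely a constant multiple of $b\,s_c^{1/2}$, since at the critical point the two terms are in fixed ratio $3/2$), and then substituting the formula for $s_c$ expresses everything in the quantity appearing on the left of \eqref{c1d}; the inequality \eqref{c1d} is precisely the statement $\tilde{h}_c(s_c)\ge 0$, with the stated equality case. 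Note $s_c\le\pi/3$ is, by \eqref{c0} and \eqref{sc}, exactly the condition $s_0=s_c$, and for $c<c_1$ strict inequality in \eqref{c1d} yields $\tilde{h}_c(s_c)>0$. In the case $s_c>\pi/3$: then the max is $\tilde{h}_c(\pi/3)=\frac{1}{12}-a(\pi/3)^{-3/4}-b(\pi/3)^{1/2}$, and using $c_0\le c_2$ together with \eqref{c2d} (which, after multiplying through by $12$ and recalling the definitions of $a$ and $b$, says exactly $12a(\pi/3)^{-3/4}+12b(\pi/3)^{1/2}\le 1$, i.e. $\tilde{h}_c(\pi/3)\ge 0$) gives the conclusion, strict when $c<c_2$.

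Finally I would combine the two cases. For $c<c_0=\min\{c_1,c_2\}$ we have $c<c_1$ and $c<c_2$, so in either case the relevant bound (\eqref{c1d} or \eqref{c2d}) is strict, giving $\max_{0<s\le\pi/3}\tilde{h}_c(s)>0$. For $c=c_0$, $c_0$ equals $c_1$ or $c_2$ (whichever is smaller), and in the corresponding case the bound is an equality at $c=c_0$, hence the max is $\ge 0$; in the other subcase (if the two critical values $c_1,c_2$ happen to coincide, or if only one governs) one still only gets $\ge 0$, which is all that is claimed. This yields \eqref{Gcm}. The main obstacle — really the only non-routine point — is the bookkeeping in matching the reduced expression for $\tilde{h}_c$ at $s=s_c$ (respectively at $s=\pi/3$) against the precise algebraic form of conditions \eqref{c1d} and \eqref{c2d}, including tracking the equality cases; everything else is a one-variable monotonicity argument.
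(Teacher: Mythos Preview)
Your proposal is correct and follows essentially the same route as the paper: compute $\tilde{h}_c'(s)$, identify the unique critical point $s_c$, use the increasing/decreasing structure to locate the maximum on $(0,\pi/3]$ at $\min\{s_c,\pi/3\}$, and then match $\tilde{h}_c(s_c)$ against \eqref{c1d} and $\tilde{h}_c(\pi/3)$ against \eqref{c2d}. Your treatment is in fact slightly more explicit than the paper's in justifying via monotonicity why the maximum sits at $\min\{s_c,\pi/3\}$; the paper simply computes both $\tilde{h}_c(s_c)$ and $\tilde{h}_c(\pi/3)$ separately and then concludes in one line from $c_0=\min\{c_1,c_2\}$. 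One small remark: your final paragraph on the case $c=c_0$ is a bit tangled --- there is no need to argue that the ``corresponding'' case has equality; it suffices to note that $c_0\le c_1$ and $c_0\le c_2$ force both $\tilde{h}_{c_0}(s_{c_0})\ge 0$ and $\tilde{h}_{c_0}(\pi/3)\ge 0$, so whichever of the two realizes the maximum is nonnegative.
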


 \begin{proof} By \eqref{c2d} and \eqref{Gct}, we have
 \begin{align}\label{sc2}
   \tilde{h}_c\left(\frac{\pi}{3}\right)
  & =   \f{1}{12}-c^{\frac{1}{4}}\left(\frac{\pi}{3}\right)^{-\frac{3}{4}}\mathcal{C}_4\|g\|_{\frac{4}{3}}
		-\f{c^{\f{3}{2}}\left(\frac{\pi}{3}\right)^{\frac{1}{2}}}{\pi^2}\zeta\left(\sqrt{\f{c}{3\pi}}\right)\nonumber\\
  & \left\{
   \begin{array}{ll}
     >0, \ \mbox{if}  & c<c_2, \\
     \ge 0, \ \mbox{if}  & c=c_2.
   \end{array}
 \right.
 \end{align}
 Moreover, by definition of $\tilde{h}_c(s)$, we have
 $$
    \tilde{h}_c'(s) =\frac{3c^{\frac{1}{4}}\mathcal{C}_4\|g\|_{4/3}}{4}s^{-\frac{7}{4}}
		-\f{c^{\f{3}{2}}s^{-\frac{1}{2}}}{2\pi^2}\zeta\left(\sqrt{\f{c}{3\pi}}\right).
 $$
 It is easy to check that $\tilde{h}_c'(s)=0$  has a unique solution at $s=s_c$, and by using \eqref{c1d}, we have
 \begin{align}\label{sc1}
   \max_{s> 0}\tilde{h}_c(s)
  & = \tilde{h}_c(s_c)=\f{1}{12}-\frac{5c^{\frac{1}{4}}\mathcal{C}_4\|g\|_{\frac{4}{3}}}{2}s_c^{-\f{3}{4}}\nonumber\\
  & =  \f{1}{12}-\frac{5c^{\frac{1}{4}}\mathcal{C}_4\|g\|_{\frac{4}{3}}}{2}
  \left[\frac{2c^{\frac{5}{4}}\zeta\left(\sqrt{\f{c}{3\pi}}\right)}{3\pi^2\mathcal{C}_4\|g\|_{\frac{4}{3}}}\right]^{\f{3}{5}}\nonumber\\
  & \left\{
   \begin{array}{ll}
     >0, \ \mbox{if}  & c<c_1, \\
     \ge 0, \ \mbox{if}  & c=c_1.
   \end{array}
 \right.
 \end{align}
 Since $c_0=\min\{c_1,c_2\}$, then \eqref{Gcm} follows from \eqref{sc1} and \eqref{sc2}, and hence the lemma follows.
 \end{proof}

Set
	$$
	A_{s}:=\left\{u\in H^1(\R^2): \|\nabla u\|_2^2 < s\right\}.
	$$
	
	\begin{lemma}\label{lem 3.2}
		 Assume that {\rm(G1)} holds. Then for each $c\in (0,2\pi]$, we have
		\begin{align}\label{Phs}
			\Phi(u)\ge \|\nabla u\|_2^2\ \tilde{h}_c(\|\nabla u\|_2^2),  \ \ \forall \ u\in \mathcal{S}_c\cap\overline{ A_{\pi/3}}.
		\end{align}
	\end{lemma}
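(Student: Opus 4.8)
The plan is to bound each of the three pieces of $\Phi(u)$ from below on $\mathcal S_c\cap\overline{A_{\pi/3}}$ and then factor out $\|\nabla u\|_2^2$. Fix $u\in\mathcal S_c$ with $\|\nabla u\|_2^2\le\pi/3<\pi$, so all the estimates of Lemma \ref{lem 2.4} are available. The kinetic term $\tfrac12\|\nabla u\|_2^2$ and the nonlocal term $\tfrac12 B(u)\ge0$ are harmless; the work is to control the exponential term $-\tfrac12\int_{\R^2}(e^{u^2}-1-u^2)\,\mathrm dx$ from below (i.e. control it from above) and the linear term $-\int_{\R^2}g(x)u\,\mathrm dx$ from below.

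For the exponential term I would invoke \eqref{D06}: since $\|\nabla u\|_2^2<\pi$,
\[
\int_{\R^2}\bigl(e^{u^2}-1-u^2\bigr)\,\mathrm dx\le\tfrac12\|u\|_4^4+\frac{\|\nabla u\|_2^6}{2\pi(\pi-\|\nabla u\|_2^2)}+\frac{2c^{3/2}\|\nabla u\|_2^3}{\pi^2}\,\zeta\!\left(\frac{\|u\|_2\|\nabla u\|_2}{\pi}\right).
\]
Here I use that $\|u\|_2^2=c$ and $\|\nabla u\|_2^2\le\pi/3$, so the argument of $\zeta$ is at most $\sqrt{c/(3\pi)}$ and, $\zeta$ being increasing with nonnegative coefficients, $\zeta(\cdot)\le\zeta(\sqrt{c/(3\pi)})$. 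For the first term on the right I apply Lemma \ref{lemB.1}, specifically \eqref{Bu2}, in the form $\|u\|_4^4\le4\|\nabla u\|_2 B(u)^{1/2}$, together with Young's inequality $4\|\nabla u\|_2 B(u)^{1/2}\le 4\|\nabla u\|_2^2+B(u)$ (or the sharper $ab\le \tfrac14 a^2+b^2$ tuned to cancel against $\tfrac12 B(u)$); the point is to absorb the resulting $B(u)$-contribution into the $+\tfrac12 B(u)$ already present in $\Phi$, leaving only a multiple of $\|\nabla u\|_2^2$. For the middle term, $\|\nabla u\|_2^6/(2\pi(\pi-\|\nabla u\|_2^2))\le\|\nabla u\|_2^2\cdot\|\nabla u\|_2^4/(2\pi\cdot(2\pi/3))$ since $\pi-\|\nabla u\|_2^2\ge 2\pi/3$, which is again $\|\nabla u\|_2^2$ times a constant (one checks the constants are chosen in \eqref{Gct} so that everything collapses to the stated $\tfrac1{12}$, the $c^{3/2}s^{1/2}\zeta/\pi^2$ and $c^{1/4}s^{-3/4}\mathcal C_4\|g\|_{4/3}$ terms). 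For the linear term, Hölder with exponents $\tfrac43,4$ gives $\int g\,u\le\|g\|_{4/3}\|u\|_4$, and then \eqref{GNs} with $s=4$ yields $\|u\|_4\le\mathcal C_4\|u\|_2^{1/2}\|\nabla u\|_2^{1/2}=\mathcal C_4 c^{1/4}\|\nabla u\|_2^{1/2}$, so $-\int g\,u\ge-\mathcal C_4 c^{1/4}\|g\|_{4/3}\|\nabla u\|_2^{1/2}=\|\nabla u\|_2^2\cdot\bigl(-\mathcal C_4 c^{1/4}\|g\|_{4/3}\|\nabla u\|_2^{-3/2}\bigr)$, which produces precisely the $-c^{1/4}s^{-3/4}\mathcal C_4\|g\|_{4/3}$ term in $\tilde h_c(s)$ after writing $s=\|\nabla u\|_2^2$.

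Assembling the pieces, every leftover term is $\|\nabla u\|_2^2$ multiplied by a quantity that is bounded below by the right-hand side of the definition \eqref{Gct} of $\tilde h_c(\|\nabla u\|_2^2)$; the residual $B(u)$ generated by \eqref{Bu2} is cancelled exactly by the $+\tfrac12 B(u)$ sitting in $\Phi$, which is the one delicate bookkeeping point — one must verify the constants in \eqref{c1d}–\eqref{c0} and \eqref{Gct} were selected so that this cancellation, and not merely an inequality up to a positive constant, holds, and in particular that the coefficient of $\|\nabla u\|_2^2$ that remains is $\tfrac1{12}$ minus the two error terms. The main obstacle is therefore not any single estimate but the careful matching of numerical constants across \eqref{D06}, \eqref{Bu1}, \eqref{Bu2} and \eqref{GNs} so that the bound condenses to exactly $\|\nabla u\|_2^2\,\tilde h_c(\|\nabla u\|_2^2)$; once the constants are tracked, \eqref{Phs} follows for all $u\in\mathcal S_c\cap\overline{A_{\pi/3}}$ and $c\in(0,2\pi]$.
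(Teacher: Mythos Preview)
Your overall plan is right, but the step where you handle the $\|u\|_4^4$ term has a genuine gap. You propose to use \eqref{Bu2} in the form $\tfrac14\|u\|_4^4\le\|\nabla u\|_2\,B(u)^{1/2}$ together with Young's inequality ``tuned to cancel against $\tfrac12 B(u)$''. But Young's inequality gives $\|\nabla u\|_2\,B(u)^{1/2}\le \tfrac{\epsilon}{2}\|\nabla u\|_2^2+\tfrac{1}{2\epsilon}B(u)$, and to absorb the $B(u)$ piece into $\tfrac12 B(u)$ you must take $\epsilon\ge 1$, which then costs at least $\tfrac12\|\nabla u\|_2^2$. Thus
\[
\tfrac12\|\nabla u\|_2^2+\tfrac12 B(u)-\tfrac14\|u\|_4^4\ \ge\ 0,
\]
and \emph{no positive multiple} of $\|\nabla u\|_2^2$ survives. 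With only this, the remaining middle term $-\tfrac{\|\nabla u\|_2^6}{4\pi(\pi-\|\nabla u\|_2^2)}$ makes the coefficient of $\|\nabla u\|_2^2$ negative, so the constants cannot collapse to $\tfrac1{12}$.

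What the paper does instead is use \emph{both} inequalities in Lemma~\ref{lemB.1}. From \eqref{Bu1} one has $B(u)^{1/2}\le \tfrac{c}{4\pi}\|\nabla u\|_2$, and then
\[
\|\nabla u\|_2^2+B(u)-\tfrac12\|u\|_4^4\ \ge\ \|\nabla u\|_2^2+B(u)-2\|\nabla u\|_2 B(u)^{1/2}
=(\|\nabla u\|_2-B(u)^{1/2})^2\ \ge\ \Bigl(1-\tfrac{c}{4\pi}\Bigr)^2\|\nabla u\|_2^2\ge\tfrac14\|\nabla u\|_2^2
\]
for $c\le 2\pi$; this is exactly \eqref{NB0}. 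It is this step, relying on $\|u\|_2^2=c\le 2\pi$ through \eqref{Bu1}, that produces the $\tfrac18\|\nabla u\|_2^2$ needed so that subtracting the middle-term bound $\tfrac{1}{24}\|\nabla u\|_2^2$ leaves $\tfrac1{12}\|\nabla u\|_2^2$. Your treatment of the $g$ term and of the series via $\zeta$ is fine; the missing ingredient is the completion-of-the-square argument using \eqref{Bu1} rather than Young's inequality.
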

	
	\begin{proof} Using Lemma \ref{lemB.1}, we can deduce that
\begin{equation}\label{Bu3}
			B(u) \le  \frac{1}{16\pi^2}\|\nabla u\|_2^2\|u\|_2^4,
			\ \ \ \ \forall\ u\in H_r^1(\R^2).
		\end{equation}
Let $u\in \mathcal{S}_c\cap \overline{A_{\pi/3}}$ and $c\in (0,2\pi]$.
		By \eqref{Bu2} and \eqref{Bu3}, we have
		\begin{align}\label{NB0}
			\|\nabla u\|_2^2+ B(u)-\frac{1}{2}\|u\|_4^4
			& \ge \|\nabla u\|_2^2+ B(u)-2\|\nabla u\|_2 B(u)^{1/2}\nonumber\\
			& \ge \left(1-\frac{c}{4\pi}\right)^2\|\nabla u\|_2^2  \ge \frac{1}{4} \|\nabla u\|_2^2.
		\end{align}
		Then  for each $c\in (0,2\pi]$, it follows from \eqref{fai}, \eqref{GNs}, \eqref{D06}, \eqref{xit}, \eqref{Gct}, \eqref{NB0} and {\rm(G1)} that
		\begin{align}\label{Phg1}
			\Phi(u)
			&  =  \frac{1}{2}\|\nabla u\|_2^2+\frac{1}{2}B(u)-\frac{1}{4}\|u\|_4^4
			-\frac{1}{2}\int_{\R^2}\left(e^{u^2}-1-u^2-\frac{u^4}{2}\right)\mathrm{d}x-\int_{\R^2}g(x)u\mathrm{d}x\nonumber\\
			& \ge  \frac{1}{8}\|\nabla u\|_2^2
			-\f{\|\nabla u\|_2^6}{4\pi(\pi-\|\nabla u\|_2^2)}-\|g\|_{\frac{4}{3}}\|u\|_4\nonumber\\
			&        \ \ \ \ -\f{c^{\f{3}{2}}\|\nabla u\|_2^3}{\pi^2}\sum_{k=0}^{\infty}\f{4^{k+2}(k+1)+1}{(k+1)(k+3)!}
			\left(\f{\|u\|_2\|\nabla u\|_2}{\pi}\right)^k\nonumber\\
			& \ge  \f{1}{12}\|\nabla u\|_2^2-c^{\frac{1}{4}}\mathcal{C}_4\|g\|_{\frac{4}{3}}\|\nabla u\|_2^{\frac{1}{2}}
			-\f{c^{\f{3}{2}}\|\nabla u\|_2^3}{\pi^2}\zeta\left(\sqrt{\f{c}{3\pi}}\right)\nonumber\\
			& =  \|\nabla u\|_2^2\left[\f{1}{12}-c^{\frac{1}{4}}\mathcal{C}_4\|g\|_{\frac{4}{3}}\|\nabla u\|_2^{-\frac{3}{2}}
			-\f{c^{\f{3}{2}}\|\nabla u\|_2}{\pi^2}\zeta\left(\sqrt{\f{c}{3\pi}}\right)\right]\nonumber\\
			&  =    \|\nabla u\|_2^2\ \tilde{h}_c(\|\nabla u\|_2^2), \ \ \forall \ u\in \mathcal{S}_c\cap \overline{A_{\pi/3}}.
		\end{align}
The the proof is completed.
	\end{proof}
	\par
Next, we consider the following local minimization problem: for each $c \in (0, c_0)$,
$$m(c):=\inf_{u\in \mathcal{S}_c\cap  A_{s_0}}\Phi(u),$$
where $c_0,s_0$ are given by \eqref{c0}.

 \begin{lemma}\label{lem 3.3}
 Assume that {\rm(G1)}  holds, then for each $c \in (0, c_0)$, there holds
 $$
    m(c)=\inf_{u\in \mathcal{S}_c\cap  A_{s_0}}\Phi(u)<0<\inf_{u\in \partial(\mathcal{S}_c\cap A_{s_0})}\Phi(u).$$
 \end{lemma}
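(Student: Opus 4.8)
The statement splits into two inequalities which I would handle separately. For the \emph{strict positivity of the boundary infimum}, note that $\partial(\mathcal{S}_c\cap A_{s_0})=\{u\in\mathcal{S}_c:\|\nabla u\|_2^2=s_0\}$ since $\mathcal{S}_c$ is a fixed sphere in $L^2$. Because $s_0\le \pi/3$, Lemma \ref{lem 3.2} applies to every such $u$, giving $\Phi(u)\ge s_0\,\tilde h_c(s_0)$. By the definition of $s_0$ in \eqref{c0} and Lemma \ref{lem 3.1}, for $c<c_0$ one has $s_0=\min\{s_c,\pi/3\}$ and $\tilde h_c(s_0)=\max_{0<s\le\pi/3}\tilde h_c(s)>0$. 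Hence $\inf_{u\in\partial(\mathcal{S}_c\cap A_{s_0})}\Phi(u)\ge s_0\,\tilde h_c(s_0)>0$, which is the right-hand inequality.

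For the \emph{negativity of $m(c)$}, the plan is to exhibit a single test function $v\in\mathcal{S}_c\cap A_{s_0}$ with $\Phi(v)<0$. The natural move, in the spirit of Remark \ref{rem 1.2}(i), is to pick a fixed radial profile $\tilde u$ with $\|\tilde u\|_2^2=c$ and look at the $L^2$-preserving dilations $t\tilde u(tx)$, whose energy is displayed in \eqref{hud1111}: $\Phi(t\tilde u(tx))=\frac{t^2}{2}[\|\nabla\tilde u\|_2^2+B(\tilde u)]-\frac{1}{2t^2}\int_{\R^2}(e^{t^2\tilde u^2}-1-t^2\tilde u^2)\,dx-\frac1t\int_{\R^2}g(x/t)\tilde u\,dx$. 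As $t\to0^+$, the first (quadratic) term is $O(t^2)$, the exponential term is $O(t^2)$ as well (using $e^{s}-1-s=O(s^2)$ for small $s$, together with $\|\nabla(t\tilde u(tx))\|_2^2=\|\nabla\tilde u\|_2^2$ staying bounded so Lemma \ref{lem 2.3}(ii) controls the integral), but the perturbation term $-\frac1t\int g(x/t)\tilde u\,dx=-t\int g(y)\tilde u(ty)\,dy$ is \emph{also} only $O(t)$ times a quantity that, a priori, could vanish. This is exactly the subtlety flagged in Remark \ref{rem 1.2}: unlike the pure power case, one cannot read off the sign immediately, because $\int g(y)\tilde u(ty)\,dy\to \tilde u(0)\int g\,dy$ might be unhelpful or $\tilde u$ need not be continuous at $0$.

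The way around this, which I expect to be the technical heart of the argument (the lemma in the paper seems to call it the $\bar H_{\tilde u}(t)$ device), is to \emph{choose $\tilde u$ cleverly} rather than take an arbitrary profile and dilate. Concretely, I would fix a profile for which $g$'s contribution is genuinely negative for small $t$ — for instance a radial bump supported where $g>0$ (possible since $g\ge0$, $g\not\equiv0$ by (G1)), or more robustly one should build $\tilde u$ and an auxiliary one-variable function $\bar H_{\tilde u}(t)$ with $\Phi(t\tilde u(tx))\le \bar H_{\tilde u}(t)$ and $\bar H_{\tilde u}(t)\to 0^-$ as $t\to0^+$. The upper bound $\bar H_{\tilde u}$ is obtained by estimating the quadratic and exponential terms from above (they are nonnegative and $O(t^2)$) and bounding $-\frac1t\int g(x/t)\tilde u\,dx$ from above by a negative quantity of order $t$ using monotonicity/positivity of $g$ and the explicit choice of $\tilde u$ — say $\tilde u$ radially nonincreasing, so that $\int g(x/t)\tilde u\,dx\ge \tilde u(R)\int_{B_{R/t}}g(x/t)\,dx = t^2\tilde u(R)\int_{B_R}g$ for suitable fixed $R$, making the perturbation term $\le -t\cdot(\text{const}>0)$. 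Then for $t>0$ small enough, $\bar H_{\tilde u}(t)<0$ \emph{and} $\|\nabla(t\tilde u(tx))\|_2^2=\|\nabla\tilde u\|_2^2 \cdot 1$ — wait, dilation by $t\tilde u(tx)$ keeps $\|\nabla\cdot\|_2$ invariant in $\R^2$, so I would instead use a two-parameter family or a plain scaling $t\tilde u$ (no space dilation) to also force $\|\nabla\cdot\|_2^2<s_0$; matching the $L^2$ constraint then requires rescaling the profile's $L^2$-norm to $c$, which is harmless. With $v$ this small-$t$ member, $v\in\mathcal{S}_c\cap A_{s_0}$ and $\Phi(v)<0$, so $m(c)=\inf_{\mathcal{S}_c\cap A_{s_0}}\Phi\le\Phi(v)<0$. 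Combining the two parts gives $m(c)<0<\inf_{\partial(\mathcal{S}_c\cap A_{s_0})}\Phi$, completing the proof.
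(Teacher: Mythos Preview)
Your treatment of the boundary inequality matches the paper's argument exactly.

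For the negativity of $m(c)$, however, there is a genuine computational slip that derails your plan. You assert that the $L^2$-preserving dilation $v_t(x):=t\tilde u(tx)$ leaves $\|\nabla\cdot\|_2$ invariant in $\R^2$. It does not: a direct calculation gives $\nabla v_t(x)=t^2(\nabla\tilde u)(tx)$, hence
\[
\|\nabla v_t\|_2^2=\int_{\R^2}t^4|\nabla\tilde u(tx)|^2\,\mathrm{d}x=t^2\|\nabla\tilde u\|_2^2\longrightarrow 0\quad\text{as }t\to0^+.
\]
(What is scale-invariant in $\R^2$ is the Dirichlet energy of the \emph{pure} dilation $\tilde u(tx)$, without the prefactor $t$.) Consequently the whole detour through a ``two-parameter family or a plain scaling $t\tilde u$'' is unnecessary: for $t$ small, $v_t\in\mathcal{S}_c\cap A_{s_0}$ automatically.

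With this correction the paper's argument goes through cleanly, and it is worth comparing with your sketch of the perturbation estimate, which is in the right spirit but has a substitution error (your line $\tilde u(R)\int_{B_{R/t}}g(x/t)\,\mathrm{d}x=t^2\tilde u(R)\int_{B_R}g$ is not correct). The paper instead fixes a point $x_0$ with $g(x_0)>0$, uses continuity of $g$ to get $g\ge g(x_0)/2$ on $B_\delta(x_0)$ for some $\delta\in(0,1]$, and picks $\tilde u\in H^1_r\cap\mathcal C$ strictly positive with $\|\tilde u\|_2^2=c$, so that $\tilde m:=\min_{|x|\le|x_0|+1}\tilde u>0$. Since $B_{t\delta}(tx_0)\subset B_{|x_0|+1}$ for $t\le1$, one obtains
\[
\frac{1}{t}\int_{\R^2}g\!\left(\frac{x}{t}\right)\tilde u\,\mathrm{d}x
\ge \frac{1}{t}\cdot\frac{g(x_0)}{2}\,\tilde m\,\bigl|B_{t\delta}(tx_0)\bigr|
=\frac{1}{2}\pi\delta^2 g(x_0)\tilde m\,t,
\]
and dropping the (nonnegative) exponential term yields the upper bound
\[
\Phi(v_t)\le \bar H_{\tilde u}(t):=\frac{t^2}{2}\bigl[\|\nabla\tilde u\|_2^2+B(\tilde u)\bigr]-\frac{1}{2}\pi\delta^2 g(x_0)\tilde m\,t,
\]
which tends to $0^-$ as $t\to0^+$. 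Taking $t_0$ small gives $v_{t_0}\in\mathcal{S}_c\cap A_{s_0}$ with $\Phi(v_{t_0})<0$, hence $m(c)<0$.
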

 \begin{proof} For any $u\in \partial\left(\mathcal{S}_c\cap  A_{s_0}\right)$, we have $\|\nabla u\|_2^2=s_0$.
 Thus, using Lemmas \ref{lem 3.1} and \ref{lem 3.2}, we can deduce that
 $$
   \Phi(u)\ge \|\nabla u\|_2^2\ \tilde{h}_c(\|\nabla u\|_2^2)=s_0\tilde{h}_{c}(s_0)>s_0\tilde{h}_{c_0}(s_0) \ge 0.
 $$
 For any $u\in \mathcal{S}_c$, we define the function $H_{u} : (0, +\infty ) \rightarrow \R$ by
		\begin{align}\label{hud11}
			H_{u}(t)
			&:  =  \Phi(tu_t)\nonumber\\
&\ =\f{t^2}{2}\left[\|\nabla u\|_2^2+B(u)\right]
			-\f{1}{2t^{2}}\int_{\R^2}\left(e^{t^2u^2}-1-t^2u^2\right)\mathrm{d}x-\f{1}{t}\int_{\R^2}g\left(\frac{x}{t}\right)u\mathrm{d}x.
		\end{align}
  By {\rm(G1)}, without loss of generality, we can assume that $ g(x_0) > 0$ with $x_0 \in \mathbb{R}^2 $. Using $g \in \mathcal {C}(\mathbb{R}^2,[0,+\infty))$, it is easy to derive that
 there exists $0 < \delta \le 1$ such that
 \begin{align}\label{hud2}
 g(x) \ge \frac{g(x_0)}{2}>0,  \ \ \ \ \forall \ x \in B_\delta(x_0).
  \end{align}
 Take $\tilde{u} \in  H^1(\mathbb{R}^2) \cap \mathcal {C}(\mathbb{R}^2,\mathbb{R}), \tilde{u}>0 $ and $\|\tilde{u}\|^2_2=c<c_0,$ then $\tilde{u}$ is bounded from positive below if $|x| \le |x_0|+1 $. So we define
 \begin{align}\label{hud3}
 \tilde{m}:=\min\{ \tilde{u}(x): |x| \le |x_0|+1\}>0.
  \end{align}
When $t \in (0,1]$, from \eqref{hud11}, \eqref{hud2} and \eqref{hud3}, we have
 \begin{align}\label{hud1}
 H_{\tilde{u}}(t)&=\f{t^2}{2}\left[\|\nabla \tilde{u}\|_2^2+B(\tilde{u})\right]
     -\f{1}{2t^{2}}\int_{\R^2}\left(e^{t^2\tilde{u}^2}-1-t^2\tilde{u}^2\right)\mathrm{d}x-\f{1}{t}\int_{\R^2}g\left(\frac{x}{t}\right)\tilde{u}\mathrm{d}x \nonumber\\
     & \le \f{t^2}{2}\left[\|\nabla \tilde{u}\|_2^2+B(\tilde{u})\right]-\f{1}{t}\int_{B_{t\delta}(tx_0)}\frac{g(x_0)}{2}\tilde{u}\mathrm{d}x \nonumber\\
      & \le \f{t^2}{2}\left[\|\nabla \tilde{u}\|_2^2+B(\tilde{u})\right]-\f{1}{t}\int_{B_{t\delta}(tx_0)}\frac{g(x_0)}{2}\tilde{m}\mathrm{d}x \nonumber\\
      & \le \f{t^2}{2}\left[\|\nabla \tilde{u}\|_2^2+B(\tilde{u})\right]-\frac{1 }{2}t \pi \delta^2 g(x_0)\tilde{m} := \bar{H}_{\tilde{u}}(t) .
 \end{align}
In the above three inequality, we use the following fact:
$$B_{t\delta}(tx_0) \subset  B_{|x_0|+\delta}(0)\subset  B_{|x_0|+1}(0), \ \  \hbox{for} \ \  t,\delta \in (0,1] .$$
It follows from \eqref{hud1} that $\lim_{t\to 0^{+}}\bar{H}_{\tilde{u}}(t) = 0^{-}$. Therefore, there exists $t_0 > 0$ small
 enough such that $\|\nabla (t_0 \tilde{u}_{t_0})\|_2^2=t_0^2\|\nabla \tilde{u}\|_2^2<s_0$ and $\Phi(t_0 \tilde{u}_{t_0})=H_{\tilde{u}}(t_0)\leq \bar{H}_{\tilde{u}}(t)<0$. This,
 together with $t_0\tilde{u}_{t_0}\in \mathcal{S}_c$, implies that $m(c)<0$.
 \end{proof}
\begin{remark}\label{rem 1.1}
There are many functions which satisfy $u \in  H^1(\mathbb{R}^2) \cap \mathcal {C}(\mathbb{R}^2,\mathbb{R}), u>0 $ and $\|u\|^2_2<c_0$. For example,
\begin{equation}\label{better}
  u(x)=\frac{\sqrt{c_0/3\pi}}{|x|^4+1}.
\end{equation}

\end{remark}
 Applying Lemmas \ref{lem 2.17} and \ref{lem 3.3}, we obtain the existence of a  minimizing sequence for $ m(c)$, which is also
a (PS) sequence for $\Phi$ on $\mathcal{S}_c$, which reads as follows.

 \begin{lemma}\label{lem 3.4} Assume that {\rm(G1)} and {\rm(G2)} hold, then  for any $c\in (0,c_0)$, there exists a sequence $\{u_n\}\subset \mathcal{S}_c$ such that
 \begin{equation}\label{mcn}
    \|\nabla u_n\|_2^2< s_0\leq\frac{\pi}{3}, \ \ \Phi(u_n)\rightarrow m(c)<0 \   \
   \mbox{and}\ \ \Phi|_{\mathcal{S}_c}'(u_n) \rightarrow 0.
 \end{equation}
 \end{lemma}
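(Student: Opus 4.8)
The plan is to apply Lemma~\ref{lem 2.17} with $E=H^1_r(\R^2)$, $H=L^2(\R^2)$ (rescaled so that $M=\mathcal S_1$ after normalizing the $L^2$-norm), $\varphi=\Phi$, and $K=\mathcal S_c\cap A_{s_0}$, using the gap inequality established in Lemma~\ref{lem 3.3}. First I would set $a:=m(c)=\inf_{v\in\mathcal S_c\cap A_{s_0}}\Phi(v)$; by Lemma~\ref{lem 3.3} we have $a<0$, while $\inf_{v\in\partial(\mathcal S_c\cap A_{s_0})}\Phi(v)>0$. Since on a thin collar $\mathcal S_c\cap(A_{s_0+\delta}\setminus A_{s_0})$ every function $u$ has $s_0\le\|\nabla u\|_2^2<s_0+\delta$, Lemma~\ref{lem 3.2} combined with Lemma~\ref{lem 3.1} (continuity of $\tilde h_c$ and $s_0\tilde h_c(s_0)>s_0\tilde h_{c_0}(s_0)\ge0$) gives, for $\delta$ small enough, $\Phi(u)\ge(s_0+\delta)\tilde h_c(s_0+\delta)$, which stays bounded below by a positive constant; hence $b:=\inf_{v\in M\cap(K_\rho\setminus K)}\Phi(v)>0>a$ for a suitable $\rho>0$, so \eqref{D50} holds. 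Here one has to be a little careful that $K_\rho\setminus K$ (defined via the $H^1$-metric) is contained in a gradient-collar of the type above together with points already far from $\mathcal S_c$ in $H^1$; on the latter $\Phi$ is also controlled because $\|\nabla u\|_2^2$ is then either still $<s_0$ (those are in $K$) or $\ge s_0$, and in all cases Lemma~\ref{lem 3.2} applies to the normalized function.

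Next, for each $n\in\N$ choose $\varepsilon_n\in(0,(b-a)/2)$ with $\varepsilon_n\to0$ and $\delta_n=\sqrt{\varepsilon_n}\to0$ (so $\varepsilon_n/\delta_n\to0$ and $\delta_n<\rho/2$ eventually), and pick $w_n\in M\cap K$ (i.e.\ $w_n\in\mathcal S_c\cap A_{s_0}$) with $\Phi(w_n)\le a+\varepsilon_n$; such $w_n$ exists by definition of the infimum. Lemma~\ref{lem 2.17} then produces $u_n\in\mathcal S_c$ with $a-2\varepsilon_n\le\Phi(u_n)\le a+2\varepsilon_n$, $\|u_n-w_n\|\le2\delta_n$, and $\|\Phi|_{\mathcal S_c}'(u_n)\|\le8\varepsilon_n/\delta_n$. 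The first estimate gives $\Phi(u_n)\to m(c)<0$; the third gives $\Phi|_{\mathcal S_c}'(u_n)\to0$. It remains to verify the gradient bound $\|\nabla u_n\|_2^2<s_0$. Since $w_n\in A_{s_0}$ and $\|u_n-w_n\|\le2\delta_n\to0$, for $n$ large $u_n$ lies in the $H^1$-neighborhood $K_{2\delta_n}$; but by construction $\Phi(u_n)\le a+2\varepsilon_n<b$, so $u_n$ cannot lie in $M\cap(K_\rho\setminus K)$ (on which $\Phi\ge b$). Hence $u_n\in K=\mathcal S_c\cap A_{s_0}$ for all large $n$, i.e.\ $\|\nabla u_n\|_2^2<s_0\le\pi/3$; discarding the finitely many initial terms yields the desired sequence.

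The main obstacle I expect is the bookkeeping around the set $K_\rho\setminus K$: one must confirm that \emph{every} point of $M\cap(K_\rho\setminus K)$ — not just points on the gradient-collar $\{s_0\le\|\nabla u\|_2^2<s_0+\delta\}$ — satisfies $\Phi\ge b>0$. A point $v\in M\cap(K_\rho\setminus K)$ has $\|v-u\|<\rho$ for some $u\in\mathcal S_c\cap A_{s_0}$ (so $v$ is $H^1$-close to $\mathcal S_c$, automatically on $M=\mathcal S_c$ here since $M$ is defined by the $L^2$ constraint) but $v\notin\mathcal S_c\cap A_{s_0}$, hence $\|\nabla v\|_2^2\ge s_0$. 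If $\|\nabla v\|_2^2$ were far above $s_0$, say $\ge\pi/3$, one would be outside the range where Lemma~\ref{lem 3.2} was proven; however, $\rho$ can be fixed small enough (using $\|\nabla u\|_2^2<s_0$ and $s_0\le\pi/3$) that $\|v-u\|<\rho$ forces $\|\nabla v\|_2^2<\pi/3$, so Lemma~\ref{lem 3.2} applies and, by continuity of $s\mapsto s\tilde h_c(s)$ and $s_0\tilde h_c(s_0)>0$, shrinking $\rho$ further gives $\Phi(v)\ge\tfrac12 s_0\tilde h_c(s_0)=:b>0$. Once $\rho$ is frozen with this property the argument closes. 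Finally one should double-check that the abstract framework of Section~2 genuinely applies, i.e.\ that $\Phi\in\mathcal C^1(H^1_r(\R^2),\R)$ (already recorded via \eqref{Phd}) and that the $L^2$-sphere $\mathcal S_c$ is, after the harmless rescaling $u\mapsto u/\sqrt c$, the manifold $M$ of \eqref{MD}; assumption (G2) enters only through the earlier lemmas (it is not actually needed for this particular extraction, but is kept in the hypotheses for consistency with the downstream compactness analysis).
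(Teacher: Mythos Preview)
Your proposal is correct and follows exactly the approach the paper indicates: the paper's ``proof'' is just the single sentence ``Applying Lemmas~\ref{lem 2.17} and~\ref{lem 3.3}, we obtain the existence of a minimizing sequence for $m(c)$, which is also a (PS) sequence for $\Phi$ on $\mathcal{S}_c$,'' and you have written out precisely those details. Your handling of the collar $K_\rho\setminus K$ and the argument that $u_n\in K$ via the energy comparison $\Phi(u_n)<b$ is the standard way to close this, and your observation that {\rm(G2)} is not actually used in this extraction step is accurate.
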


	\begin{proof}[Proof of Theorem {\rm\ref{thm 1.1}}] Let $\{u_n\}\subset \mathcal{S}_c$ be a minimizing sequence of $m(c)$ given by \eqref{mcn}. Without loss of generality, we can assume that $u_n \geq 0$.
	By Lemma \ref{lem 3.4}, it is easy to get that $\{u_n\}$ is bounded in $H_r^1(\R^2)$. Then there exists $\bar{u}\in H_{r}^1(\R^2)$ such that, passing to a subsequence,
		\begin{align}\label{BU1}
			u_n\rightharpoonup \bar{u}\ \ \mbox{in}\ H_{r}^1(\R^2), \ \ u_n\rightarrow \bar{u}
			\ \ \mbox{in}\ L^s(\R^2) \ \mbox{for} \ s>2, \ \
			u_n\rightarrow \bar{u}\ \mbox{a.e.\ on}\ \R^2.
		\end{align}
To derive the existence of normalized solution which  is a minimizer of $\Phi$ in the set $\mathcal{S}_c\cap A_{s_0}$,  we divide into the following two assertions.
		\vskip2mm
 {\bf Assertion 1.} $\bar{u}\ne 0 $.

 \vskip2mm
Otherwise, we suppose that  $\bar{u}=0$. By \eqref{BU1}, we have
		\begin{equation}\label{G30}
			\int_{\R^2}|u_n|^s\mathrm{d}x=o(1).
		\end{equation}
		Besides, from the H\"older inequality, {\rm(G1)}, Lemma \ref{lem 2.3} and the fact that $\|\nabla u_n\|_2^2<\pi/3$, we derive that
		\begin{align}\label{glim}
			\int_{\R^2}g(x)u_n\mathrm{d}x  \le  \|g(x)\|_{\frac{4}{3}}\|u_n\|_4 = o(1)
		\end{align}
and
\begin{align}\label{G32}
			\int_{\R^2}\left(e^{u_n^2}-1-u_n^2\right)\mathrm{d}x
			&  \le  \int_{\R^2}u_n^2\left(e^{u_n^2}-1\right)\mathrm{d}x\nonumber\\
			& \le  \left[\int_{\R^2}\left(e^{2u_n^2}-1\right)\mathrm{d}x\right]^{\frac{1}{2}}\|u_n\|_4^2=o(1).
		\end{align}
		By \eqref{fai}, \eqref{mcn}, \eqref{glim} and \eqref{G32}, we have
		\begin{align*}\label{G34}
			0 & \le \|\nabla u_n\|_2^2+B(u_n)\nonumber\\
			&  = 2m(c)
			+\int_{\R^2}\left(e^{u_n^2}-1-u_n^2\right)\mathrm{d}x+2\int_{\R^2}g(x)u_n\mathrm{d}x+o(1)\nonumber\\
			&  = 2m(c)+o(1).
		\end{align*}
		This is contradictory  due to $m(c)<0$, and so $\bar{u}\ne0$.
			\vskip2mm
 {\bf Assertion 2.} $u_n\rightarrow \bar{u}$ in $H_{r}^1(\R^2)$.

 \vskip2mm
		By \eqref{mcn} and Lemma \ref{lem 2.16}, we have
		\begin{equation}\label{G43}
			\Phi'(u_n)+\lambda_n{u}_n\rightarrow 0,
		\end{equation}
		where
		\begin{align}\label{G430}
			-\lambda_n&=\frac{1}{\|{u}_n\|_2^2}\langle\Phi'({u}_n),{u}_n\rangle\nonumber\\
			&=\frac{1}{c}\left[\|\nabla {u}_n\|_2^2+3B({u}_n)
			-\int_{\R^2}\left(e^{{u}_n^2}-1\right){u}_n^2\mathrm{d}x-\int_{\R^2}g(x)u_n\mathrm{d}x\right].
		\end{align}
		Since $\{\|{u}_n\|\}$ is bounded, we derive that $\{|\lambda_n|\}$ is bounded.
		We may thus assume, passing to a subsequence if necessary, that $\lambda_n\rightarrow \bar{\lambda}_c$.
		From \eqref{G43}, Lemmas \ref{lem2.1} and \ref{lem 2.3}, it is easy to check that
		\begin{equation}\label{G44}
			\Phi'(\bar{u})+\bar{\la}_c\bar{u}=0.
		\end{equation}
		By \eqref{G44} and Lemma \ref{Lep}, we have
		\begin{equation}\label{G45}
			\mathcal{P}(\bar{u}) = \|\nabla \bar{u}\|_2^2+ B(\bar{u})
			-\int_{\R^2}\left[\left(\bar{u}^2-1\right)e^{\bar{u}^2}+1\right]\mathrm{d}x+\int_{\R^2}\left[g(x)+\nabla g(x)\cdot x\right]\bar{u}\mathrm{d}x=0.
		\end{equation}
		From \eqref{Bu1},  \eqref{G44}, \eqref{G45} and {\rm(G2)}, we derive that
		\begin{align}\label{lamd}
			0
			&  =    \left\langle \Phi'(\bar{u}),\bar{u}\right\rangle+\bar{\la}_c\|\bar{u}\|_2^2-\mathcal{P}(\bar{u})\nonumber\\
			&  = \bar{\la}_cc+2B(\bar{u})-\int_{\R^2}\left(e^{\bar{u}^2}-1-\bar{u}^2\right)\mathrm{d}x-\int_{\R^2}\left[2g(x)+\nabla g(x)\cdot x\right]\bar{u}\mathrm{d}x\nonumber\\
			& \le \bar{\la}_cc+\frac{1}{8\pi}c\|\bar{u}\|_4^4-\frac{1}{2}\|\bar{u}\|_4^4\nonumber\\
			& \le \bar{\la}_cc-\frac{1}{4}\|\bar{u}\|_4^4  \ \ \mbox{for}\ \ c\in (0,2\pi],
		\end{align}
		which, together with $\bar{u}\ne0 $ and $0<c<c_0\le 2\pi$, yields
		\begin{equation}\label{G46}
			\bar{\la}_cc
			\ge  \frac{1}{4}\|\bar{u}\|_4^4  >0  \ \ \mbox{for}\ \ c\in (0,c_0).
		\end{equation}
		It follows from \eqref{mcn}, Lemmas \ref{lem2.1} and \ref{lem 2.3} that
		\begin{equation}\label{G47}
			\left\langle B'({u}_n),{u}_n-\bar{u}\right\rangle=o(1)
		\end{equation}
		and
		\begin{equation}\label{G48}
			\int_{\R^2}\left(e^{{u}_n^2}-1\right){u}_n({u}_n-\bar{u})\mathrm{d}x
			= o(1).
		\end{equation}
		Combining \eqref{G43}, \eqref{G44}, \eqref{G46},  \eqref{G47} and \eqref{G48}, we have
		\begin{align}\label{G49}
			o(1)
			& =   \left\langle  \Phi'({u}_n)+\la_n {u}_n,{u}_n-\bar{u}\right\rangle  \nonumber\\
			&   =   \|\nabla({u}_n-\bar{u})\|_2^2+\bar{\la}_c\|{u}_n-\bar{u}\|_2^2+o(1).
		\end{align}
		Using  \eqref{G46}, it is easy to check that $u_n\to \bar{u}$ in $H^1_r(\R^2)$. The proof of Assertion 2 is completed and thus this completes the proof of Theorem \ref{thm 1.1}.
	\end{proof}
{\section{Proof of Theorem \ref{thm 1.2}}}
 \setcounter{equation}{0}
	
	This section is devoted to consider that the existence of a critical point of mountain-pass type for $\Phi$ on $\mathcal{S}_c$, and we will give the proof of Theorem \ref{thm 1.2}.
	
By  \cite[Proposition 2.1]{JL-MA} and Theorem \ref{thm 1.1}, we can easily prove the following Lemma:
	
	\begin{lemma}\label{lem 3.6} Assume that {\rm(G1)-(G2)} hold and $c \in (0,c_0)$, then $m(c)$ is reached by a positive, radially symmetric non-increasing function, denoted $u_c$ that satisfies, for a  constant $\lambda_c> 0$,
		\begin{equation}\label{Pa2}
			-\Delta u_c+\left(\frac{h^{2}(\vert x\vert)}{\vert x\vert^{2}}+\int_{\vert x\vert}^{\infty}\frac{h(s)}{s}u_c^{2}(s)\mathrm{d}s\right)u_c
		-\left(e^{u_c^2}-1\right)u_c-g(x)=-\lambda_cu_c \ \   \mbox{in}\ \R^2.
		\end{equation}
	\end{lemma}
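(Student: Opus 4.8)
The plan is to derive Lemma \ref{lem 3.6} by combining the minimizing‐sequence analysis already completed in the proof of Theorem \ref{thm 1.1} with two standard enhancements: a symmetrization step to upgrade the minimizer to a positive, radially nonincreasing function, and the Lagrange‐multiplier identification to obtain the equation \eqref{Pa2} with $\lambda_c>0$. First I would recall from the proof of Theorem \ref{thm 1.1} that for each $c\in(0,c_0)$ we have produced $\bar u\in\mathcal{S}_c\cap A_{s_0}$ with $\Phi(\bar u)=m(c)$, together with $\bar\lambda_c>0$ satisfying $\Phi'(\bar u)+\bar\lambda_c\bar u=0$ (this is \eqref{G44} and \eqref{G46}). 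Thus a minimizer $u_c$ and a multiplier $\lambda_c>0$ already exist; it remains to arrange positivity and radial monotonicity. Since in the proof we assumed $u_n\ge 0$ and obtained $u_n\to\bar u$ strongly in $H^1_r(\R^2)$, the limit satisfies $\bar u\ge 0$ and $\bar u\in\mathcal{S}_c$; rewriting \eqref{Pa2} as $-\Delta u_c=(e^{u_c^2}-1)u_c+g(x)-\lambda_c u_c-\big(\tfrac{h^2(|x|)}{|x|^2}+\int_{|x|}^\infty\tfrac{h(s)}{s}u_c^2\big)u_c$, elliptic regularity gives $u_c\in C^1$, and the strong maximum principle—using $g\ge 0$ and $g\not\equiv 0$ from (G1)—forces $u_c>0$ everywhere (the nonnegative potential terms only help the maximum principle; a nonnegative nontrivial $u_c$ cannot vanish).

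The monotonicity and full radial‐symmetrization claim is where \cite[Proposition 2.1]{JL-MA} enters. The point is that the functional $\Phi$ decreases (or does not increase) under Schwarz symmetrization: the gradient term decreases by Pólya–Szegő, the term $-\tfrac12\int(e^{u^2}-1-u^2)$ is unchanged since the integrand is a monotone function of $|u|$, the term $-\int g u$ does not increase because $g$ is radially symmetric (Hardy–Littlewood), and the nonlocal term $B(u)$ is also controlled under symmetrization as recorded in the Byeon–Huh–Seok framework. Since $\|u_c^*\|_2^2=\|u_c\|_2^2=c$ and $\|\nabla u_c^*\|_2^2\le\|\nabla u_c\|_2^2<s_0$, the symmetrized function $u_c^*$ stays in $\mathcal{S}_c\cap A_{s_0}$ and still achieves $m(c)$; then $u_c^*$ is automatically radially nonincreasing, and applying the maximum principle argument above to $u_c^*$ yields positivity. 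Invoking \cite[Proposition 2.1]{JL-MA} directly packages the decrease of all these pieces, so I would cite it for the symmetrization and only spell out the maximum principle step.

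I would then finish by identifying the sign and positivity of $\lambda_c$ once more on the symmetrized minimizer: testing \eqref{Pa2} against $u_c$, subtracting the Pohozaev identity $\mathcal P(u_c)=0$ from Lemma \ref{Lep}, and using \eqref{Bu1} together with (G2) exactly as in \eqref{lamd}–\eqref{G46} gives $\lambda_c c\ge\tfrac14\|u_c\|_4^4>0$ since $u_c\not\equiv 0$ and $0<c<c_0\le 2\pi$. Hence $\lambda_c>0$, and \eqref{Pa2} holds in the weak—and by regularity, classical—sense.

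The main obstacle I anticipate is verifying that \emph{all} terms of $\Phi$ behave correctly under Schwarz symmetrization, particularly the nonlocal term $B(u)$ and the interaction with the constraint: one must ensure that symmetrizing does not push $\|\nabla u_c\|_2^2$ back up toward $s_0$ or out of the admissible region, and that $B(u^*)\le B(u)$ (or at least that the full functional does not increase). This is precisely the content that \cite[Proposition 2.1]{JL-MA} is designed to supply, so the real work is checking that our functional $\Phi$ fits the hypotheses of that proposition—after which positivity via the strong maximum principle and the sign of $\lambda_c$ via the Pohozaev subtraction are routine.
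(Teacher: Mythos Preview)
Your scaffolding --- take the nonnegative minimizer $\bar u$ from the proof of Theorem \ref{thm 1.1}, upgrade to strict positivity by the strong maximum principle (using $g\ge 0$, $g\not\equiv 0$), and read off $\lambda_c>0$ from the Pohozaev subtraction \eqref{lamd}--\eqref{G46} --- is exactly what the paper packages as ``by \cite[Proposition 2.1]{JL-MA} and Theorem \ref{thm 1.1}''. Where your proposal diverges, and where there is a genuine gap, is the mechanism you use for the \emph{nonincreasing} conclusion: Schwarz symmetrization of the functional $\Phi$.

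Two of the four terms in $\Phi$ do not behave as you assert under rearrangement with only {\rm(G1)}--{\rm(G2)} in force. First, the Hardy--Littlewood inequality gives $\int_{\R^2} g\,u^{*}\ge \int_{\R^2} g\,u$ only when $g=g^{*}$, i.e.\ when $g$ is itself radially nonincreasing; that is hypothesis {\rm(G3)}, which Lemma \ref{lem 3.6} does \emph{not} assume. Second --- and this is the more serious obstruction --- the claim $B(u^{*})\le B(u)$ is false in general. Rearrangement concentrates mass near the origin, so for every radius $r$ one has $\int_{B_r}(u^{*})^{2}\ge \int_{B_r}u^{2}$, and the inner factor $h(|x|)$ in \eqref{Bu} can only grow. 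A direct computation on (a smoothed) indicator of a thin annulus $\{a\le |x|\le b\}$ with $a$ large shows $B(u^{*})$ can exceed $B(u)$ by an arbitrarily large factor. There is no such symmetrization statement in \cite{BHS}; that paper works in $H^1_r$ from the start and never rearranges. And \cite[Proposition 2.1]{JL-MA} treats a local Schr\"odinger equation with no Chern--Simons term, so it cannot supply the inequality you are hoping it packages.

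The route the paper intends is \emph{a posteriori} from the equation, not \emph{a priori} from the energy: once $\bar u\ge 0$ solves \eqref{Pa2} with $\lambda_c>0$, positivity and radial monotonicity are read off from the PDE itself (this is the content borrowed from \cite{JL-MA}), rather than by comparing $\Phi(u_c)$ with $\Phi(u_c^{*})$. Your positivity and multiplier steps are fine; it is only the symmetrization-of-$\Phi$ step that should be replaced.
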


	\begin{lemma}\label{lem 3.8}
		Assume that {\rm(G1)-(G2)} hold and $c \in (0, c_0)$. Then there exists $\kappa_c>0$ such that
		\begin{equation}\label{Mu2}
			M(c):=\inf_{\gamma\in \Gamma_{c}}\max_{t\in [0, 1]}\Phi(\gamma(t))\ge \kappa_c
			>\sup_{\gamma\in \Gamma_{c}}\max\left\{\Phi(\gamma(0)), \Phi(\gamma(1))\right\},
		\end{equation}
		where
		\begin{equation}\label{Ga1}
			\Gamma_{c}=\left\{\gamma\in \mathcal{C}([0,1], \mathcal{S}_c): \gamma(0)=u_c, \Phi(\gamma(1))<2m(c)\right\}.
		\end{equation}
	\end{lemma}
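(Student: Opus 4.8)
The plan is to read off the mountain–pass geometry directly from the ``well $+$ wall'' picture already built in Lemma \ref{lem 3.3}, together with the local minimizer $u_c$ supplied by Lemma \ref{lem 3.6}. Recall that $u_c\in\mathcal{S}_c\cap A_{s_0}$ with $\Phi(u_c)=m(c)<0$; that, by Lemma \ref{lem 3.3}, $\Phi\ge s_0\tilde{h}_c(s_0)=:\kappa_c>0$ on the ``wall'' $\partial(\mathcal{S}_c\cap A_{s_0})=\{u\in\mathcal{S}_c:\|\nabla u\|_2^2=s_0\}$; and that $\Phi\ge m(c)$ on all of $\mathcal{S}_c\cap\overline{A_{s_0}}$. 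In particular, since $\Phi(u_c)=m(c)<0\le\kappa_c$, the minimizer $u_c$ is \emph{interior}, i.e. $\|\nabla u_c\|_2^2<s_0$.

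I would first check that $\Gamma_c\neq\emptyset$ (otherwise the statement, though formally true, is useless for Theorem \ref{thm 1.2}). For this I use the $L^2$-preserving dilation $t\mapsto tu_c(tx)$, whose energy is \eqref{hud11} evaluated at $u=u_c$. Since $u_c$ is positive and radially non-increasing (Lemma \ref{lem 3.6}), it is bounded below by some $a>0$ on a fixed annulus; hence $\f{1}{2t^2}\int_{\R^2}(e^{t^2u_c^2}-1-t^2u_c^2)\,\mathrm{d}x$ blows up faster than any power of $t$ as $t\to+\infty$, whereas $\f{t^2}{2}[\|\nabla u_c\|_2^2+B(u_c)]$ is merely quadratic in $t$ and, by H\"older's inequality, $\f{1}{t}\int_{\R^2}g(x/t)u_c\,\mathrm{d}x\le t^{1/2}\|g\|_{4/3}\|u_c\|_4$ is merely sublinear. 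Therefore $\Phi(tu_c(tx))\to-\infty$; picking $t_1>1$ with $\Phi(t_1u_c(t_1x))<2m(c)$, the path $s\mapsto((1-s)+st_1)\,u_c\big(((1-s)+st_1)x\big)$ lies in $\Gamma_c$.

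Next I would establish the two inequalities in \eqref{Mu2}. For the lower bound, fix any $\gamma\in\Gamma_c$ and study the continuous function $s\mapsto\|\nabla\gamma(s)\|_2^2$ on $[0,1]$: it equals $\|\nabla u_c\|_2^2<s_0$ at $s=0$, while at $s=1$ it must exceed $s_0$, for if $\|\nabla\gamma(1)\|_2^2\le s_0$ then $\gamma(1)\in\mathcal{S}_c\cap\overline{A_{s_0}}$, forcing $\Phi(\gamma(1))\ge m(c)>2m(c)$ and contradicting $\gamma\in\Gamma_c$. By the intermediate value theorem there is $s_\gamma\in(0,1)$ with $\gamma(s_\gamma)$ on the wall, so $\max_{s\in[0,1]}\Phi(\gamma(s))\ge\Phi(\gamma(s_\gamma))\ge\kappa_c$; taking the infimum over $\gamma$ gives $M(c)\ge\kappa_c>0$. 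For the upper bound, every $\gamma\in\Gamma_c$ has $\Phi(\gamma(0))=\Phi(u_c)=m(c)$ and $\Phi(\gamma(1))<2m(c)<m(c)$ (using $m(c)<0$), so $\max\{\Phi(\gamma(0)),\Phi(\gamma(1))\}=m(c)$, whence $\sup_{\gamma\in\Gamma_c}\max\{\Phi(\gamma(0)),\Phi(\gamma(1))\}=m(c)<0<\kappa_c\le M(c)$, which is precisely \eqref{Mu2}.

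The only step that needs genuine attention is the non-emptiness of $\Gamma_c$, i.e. producing a curve on $\mathcal{S}_c$ issuing from $u_c$ along which $\Phi$ drops below the threshold $2m(c)$: this hinges on showing that in the dilation energy \eqref{hud11} the exponential term overwhelms the polynomially growing gradient, nonlocal and nonhomogeneous contributions. Everything else is the connectedness/intermediate–value argument fed by the sign information of Lemma \ref{lem 3.3}. (The genuinely delicate estimate of this section — the sharp upper bound $M(c)<m(c)+2\pi$ needed later to restore compactness — is a separate matter, handled afterwards via a superposition of Moser-type functions with $u_c$.)
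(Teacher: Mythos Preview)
Your proof is correct and follows essentially the same approach as the paper: define $\kappa_c$ as (a lower bound for) the infimum of $\Phi$ on the wall $\partial(\mathcal{S}_c\cap A_{s_0})$, then use continuity of $s\mapsto\|\nabla\gamma(s)\|_2^2$ together with Lemma~\ref{lem 3.3} to force every path in $\Gamma_c$ to cross the wall. The only difference is that you additionally verify $\Gamma_c\neq\emptyset$ via the dilation path (which the paper postpones to the construction in Lemma~\ref{lem 3.14}); this extra step is correct and your estimates for the three competing terms in \eqref{hud11} are valid.
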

	
	\begin{proof}
		Set $\kappa_c:=\inf_{u\in \partial (\mathcal{S}_c\cap A_{s_0})}\Phi(u)$. By Lemma \ref{lem 3.3}, we have
		$\kappa_c>0$. Let $\gamma\in \Gamma_{c}$ be arbitrary. Since $\gamma(0)=u_c\in \mathcal{S}_c\cap A_{s_0}$ and
		$\Phi(\gamma(1))<2m(c)<m(c)$, then  Lemma \ref{lem 3.3} implies that $\gamma(1)
		\not\in \mathcal{S}_c\cap A_{s_0}$. By continuity of $\gamma(t)$ on $[0,1]$, there exists
		$t_0 \in (0,1)$ such that $\gamma(t_0)\in \partial (\mathcal{S}_c\cap A_{s_0})$, and so
		$\max_{t\in [0, 1]}\Phi(\gamma(t))\ge \kappa_c$. Hence, \eqref{Mu2} holds and the proof is completed.
	\end{proof}
	
	\par
	To apply Lemma \ref{lem 2.18}, we let $E=H_{r}^1(\R^2)$ and $H=L^2(\R^2)$. Define
	the norms of $E$ and $H$ by
	\begin{equation}\label{E12}
		\|u\|_E:= \left[\int_{\R^2}\left(|\nabla u|^{2}+u^2\right)\mathrm{d}x\right]^{1/2}, \ \ \ \ \|u\|_H:=\frac{1}{\sqrt{c}}\left(\int_{\R^2}u^2\mathrm{d}x\right)^{1/2}, \ \ \forall \ u\in E.
	\end{equation}
	After identifying $H$ with its dual, we have $E\hookrightarrow H \hookrightarrow E^*$ with continuous injections.
	Set
	\begin{equation}\label{E14}
		M:= \left\{u\in E: \|u\|_2^2=\int_{\R^2}u^2\mathrm{d}x=c\right\}.
	\end{equation}
	Set $F(u):=\frac{1}{2}\left(e^{u^2}-1-u^2\right)$ and $f(u):=\left(e^{u^2}-1\right)u$.
	Let us define a continuous map $ \beta: H_{r}^1(\R^2)\times\R \to H_{r}^1(\R^2)$ by
	\begin{equation}\label{Be}
		\beta(v, t)(x):=e^{t}v(e^tx)\ \ \mbox{for}\  v\in H_{r}^1(\R^2),\ \  \forall\ t\in\R,\ x\in\R^2,
	\end{equation}
	and consider the following auxiliary functional:
	\begin{equation}\label{Tbe}
		\tilde{\Phi}(v,t):=\Phi(\beta(v,t))
		=\f{e^{2t}}{2}\left(\|\nabla v\|_2^2+B(v)\right)
		-\f{1}{e^{2t}}\int_{\R^2}F(e^{t}v)\mathrm{d}x-e^{t}\int_{\R^2}g(x)v(e^tx)\mathrm{d}x.
	\end{equation}
	Using \eqref{Pu}, \eqref{Phd}, \eqref{Phd1}, \eqref{Tbe}, {\rm(G1)} and {\rm(G2)}, it is easy to check that $\tilde{\Phi}'$ is of class $\mathcal{C}^1$, and for any $(w,s)\in H_{r}^1(\R^2)\times \R$,
	\begin{equation}\label{DIv}
		\left\langle\tilde{\Phi}'(v,t),(w,s)\right\rangle
		=  \left\langle\Phi'(\beta(v,t)),\beta(w,t)\right\rangle+s\mathcal{P}(\beta(v,t)).
	\end{equation}
	Let
	\begin{equation}\label{uph}
		u(x):=\beta(v,t)(x)=e^{t}v(e^tx), \ \  \phi(x):=\beta(w,t)(x)=e^{t}w(e^tx).
	\end{equation}
	Then
	\begin{equation}\label{E20}
		(u,\phi)_H=\f{1}{c}\int_{\R^2}u(x)\phi(x)\mathrm{d}x=\f{1}{c}\int_{\R^2}v(x)w(x)\mathrm{d}x
		=(v,w)_H.
	\end{equation}
	This shows that
	\begin{equation}\label{E21}
		\phi\in T_u(\mathcal{S}_c) \ \Leftrightarrow \ (w,s)\in \tilde{T}_{(v,t)}(\mathcal{S}_c\times\R),
		\ \  \forall \ t,s\in \R.
	\end{equation}
	It follows from \eqref{DIv}, \eqref{uph} and \eqref{E21} that
	\begin{equation}\label{E24}
		|\mathcal{P}(u)|=\left|\left\langle\tilde{\Phi}'(v,t),(0,1)\right\rangle\right|
		\le \left\|\tilde{\Phi}|_{\mathcal{S}_c\times\R}'(v,t)\right\|
	\end{equation}
	and
	\begin{eqnarray}\label{E25}
		\left\|\Phi|_{\mathcal{S}_c}'(u)\right\|
		&  =  & \sup_{\phi\in T_u(\mathcal{S}_c)}\f{1}{\|\phi\|_E}
		\left|\left\langle\Phi'(u),\phi\right\rangle\right|\nonumber\\
		&  =  & \sup_{\phi\in T_u(\mathcal{S}_c)}\f{1}{\sqrt{\|\nabla \phi\|_2^2+\|\phi\|_2^2}}
		\left|\left\langle\Phi'(\beta(v,t)),\beta(w,t)\right\rangle\right|\nonumber\\
		&  =  & \sup_{\phi\in T_u(\mathcal{S}_c)}\f{1}{\sqrt{\|\nabla \phi\|_2^2+\|\phi\|_2^2}}
		\left|\left\langle\tilde{\Phi}'(v,t),(w,0)\right\rangle\right|\nonumber\\
		& \le & \sup_{(w,0)\in \tilde{T}_{(v,t)}(\mathcal{S}_c\times\R)}\f{e^{|t|}}{\|(w,0)\|_{E\times\R}}
		\left|\left\langle\tilde{\Phi}'(v,t),(w,0)\right\rangle\right|\nonumber\\
		& \le & e^{|t|}\left\|\tilde{\Phi}|_{\mathcal{S}_c\times\R}'(v,t)\right\|.
	\end{eqnarray}
\par
Similar to \cite[Theorem 1.2]{ZWM} and \cite[Proposition 2.1]{CTS}, we can prove the following lemma.
	\begin{lemma}\label{lem 3.9}
		Assume that {\rm(G1)-(G2)} hold and $c \in (0, c_0)$, then there exists a nonnegative sequence $\{u_n\}\subset \mathcal{S}_c$ such that
		\begin{equation}\label{PCe2}
			\Phi(u_n)\rightarrow M(c)>0, \ \   \Phi|_{\mathcal{S}_c}'(u_n) \rightarrow 0\ \
			\mbox{and}\ \ \mathcal{P}(u_n)\rightarrow 0.
		\end{equation}
	\end{lemma}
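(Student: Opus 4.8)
\textbf{Plan of proof for Lemma \ref{lem 3.9}.} The plan is to produce a Palais--Smale sequence for the constrained mountain-pass level $M(c)$ that additionally carries the Pohozaev information $\mathcal{P}(u_n)\to 0$. The natural tool is the minimax principle on the manifold $\mathcal{S}_c\times\R$ stated in Lemma \ref{lem 2.18}, applied to the dilated functional $\tilde{\Phi}$ defined in \eqref{Tbe}, because the extra $\R$-variable precisely encodes the $L^2$-invariant dilation $\beta(v,t)$ whose directional derivative at $(v,t)$ in the direction $(0,1)$ equals $\mathcal{P}(\beta(v,t))$ (see \eqref{DIv}--\eqref{E24}). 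First I would set up the minimax class: take $\tilde{\Upsilon}:=\{(u_c,0)\}$ (the first solution from Lemma \ref{lem 3.6}, viewed in $\mathcal{S}_c\times\R$), fix $\tilde\theta:=2m(c)$, and let
\[
\tilde{\Gamma}:=\left\{\tilde\gamma\in\mathcal{C}([0,1],\mathcal{S}_c\times\R):\ \tilde\gamma(0)=(u_c,0),\ \tilde\Phi(\tilde\gamma(1))<2m(c)\right\}.
\]
A path $\gamma\in\Gamma_c$ from \eqref{Ga1} lifts to $\tilde\gamma(t):=(\gamma(t),0)\in\tilde\Gamma$ with the same energy, and conversely every $\tilde\gamma\in\tilde\Gamma$ projects via $\beta$ to a path in $\Gamma_c$ with energy $\tilde\Phi(\tilde\gamma(t))=\Phi(\beta(\tilde\gamma(t)))$; hence the two minimax values coincide, $\tilde{a}:=\inf_{\tilde\gamma\in\tilde\Gamma}\max_{[0,1]}\tilde\Phi\circ\tilde\gamma=M(c)$.

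Next I would verify the mountain-pass geometry \eqref{F40} for $\tilde\Phi$ on $\tilde\Gamma$, i.e. $\tilde a>\tilde b:=\sup_{\tilde\gamma}\max\{\tilde\Phi(\tilde\gamma(0)),\tilde\Phi(\tilde\gamma(1))\}$. The left endpoint always has value $\tilde\Phi(u_c,0)=\Phi(u_c)=m(c)$, and the right endpoint has value $<2m(c)<m(c)$ since $m(c)<0$ by Lemma \ref{lem 3.3}; so $\tilde b\le m(c)$. On the other hand, any $\tilde\gamma\in\tilde\Gamma$ projects to a path in $\Gamma_c$, so by Lemma \ref{lem 3.8} its maximal energy is $\ge\kappa_c>0>m(c)\ge\tilde b$, giving $\tilde a=M(c)\ge\kappa_c>\tilde b$, which is \eqref{F40}. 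Then Lemma \ref{lem 2.18} applies: choosing $\varepsilon=\varepsilon_n\to0^+$ and $\delta=\delta_n\to0^+$ suitably (for instance $\delta_n=\sqrt{\varepsilon_n}$) and near-optimal paths $\tilde\gamma_n\in\tilde\Gamma$ with $\max_{[0,1]}\tilde\Phi\circ\tilde\gamma_n\le\tilde a+\varepsilon_n$, it yields points $(v_n,t_n)\in\mathcal{S}_c\times\R$ with $\tilde\Phi(v_n,t_n)\to M(c)$ and $\|\tilde\Phi|_{\mathcal{S}_c\times\R}'(v_n,t_n)\|\le 8\varepsilon_n/\delta_n\to0$. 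Setting $u_n:=\beta(v_n,t_n)=e^{t_n}v_n(e^{t_n}\cdot)\in\mathcal{S}_c$, the estimate \eqref{E24} gives $|\mathcal{P}(u_n)|\le\|\tilde\Phi|_{\mathcal{S}_c\times\R}'(v_n,t_n)\|\to0$ and $\Phi(u_n)=\tilde\Phi(v_n,t_n)\to M(c)>0$.

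The remaining point — and the step I expect to be the main obstacle — is to pass from the constrained derivative bound on $\mathcal{S}_c\times\R$ to $\|\Phi|_{\mathcal{S}_c}'(u_n)\|\to0$ via \eqref{E25}, which requires control of the factor $e^{|t_n|}$, equivalently a uniform bound on $|t_n|$. I would obtain this from the minimax construction: since $\min_{[0,1]}\|(v_n,t_n)-\tilde\gamma_n(s)\|_{E\times\R}\le2\delta_n\to0$ by conclusion (ii) of Lemma \ref{lem 2.18}, and each competitor path $\tilde\gamma_n$ can be chosen with its $\R$-component uniformly bounded (one may restrict the minimax class to paths with $|t|\le R$ for a fixed large $R$ without changing $\tilde a=M(c)$, because the endpoint constraint $\tilde\Phi(\tilde\gamma(1))<2m(c)$ together with the structure of $\tilde\Phi$ forces the energy to blow up as $t\to\pm\infty$ — for $t\to+\infty$ the $e^{2t}$ term dominates, while for $t\to-\infty$ the $-e^{t}\int g v(e^t\cdot)$ and $-e^{-2t}\int F(e^t v)$ terms are controlled and the Dirichlet-plus-$B$ term decays, so one needs a short lemma showing $\inf_{|t|\ge R}\tilde\Phi(v,t)\ge 2m(c)$ along admissible endpoints). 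This yields $\sup_n|t_n|<\infty$, hence $e^{|t_n|}$ is bounded, and \eqref{E25} delivers $\|\Phi|_{\mathcal{S}_c}'(u_n)\|\to0$. Nonnegativity of $u_n$ is arranged as usual by replacing each $v_n$ with $|v_n|$ along the paths (the functional $\tilde\Phi$ and all relevant quantities are even, and $u_c\ge0$ from Lemma \ref{lem 3.6}), so the symmetrized minimax is unchanged. Combining these gives the nonnegative sequence $\{u_n\}\subset\mathcal{S}_c$ with \eqref{PCe2}, and the analogy with \cite[Theorem 1.2]{ZWM} and \cite[Proposition 2.1]{CTS} can be invoked to shorten the routine verifications.
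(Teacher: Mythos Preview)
Your overall approach is exactly what the paper intends (and sets up all the machinery for): apply Lemma~\ref{lem 2.18} to $\tilde{\Phi}$ on $\mathcal{S}_c\times\R$, extract $\mathcal{P}(u_n)\to0$ from \eqref{E24}, and pass to $\Phi|_{\mathcal{S}_c}'(u_n)\to0$ via \eqref{E25} once $|t_n|$ is controlled.

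However, your justification for $\sup_n|t_n|<\infty$ contains an error. You claim that ``for $t\to+\infty$ the $e^{2t}$ term dominates'', but in fact for fixed $v\ne0$ the critical exponential nonlinearity makes $e^{-2t}\int_{\R^2}F(e^tv)\,\mathrm{d}x$ overwhelm the $e^{2t}$ term, so $\tilde{\Phi}(v,t)\to-\infty$ as $t\to+\infty$; hence the endpoint condition $\tilde{\Phi}(\tilde\gamma(1))<2m(c)$ does \emph{not} preclude large positive $t$-components, and your restriction argument fails. The standard fix (and what is done in \cite{CTS}) is simpler: since you have already shown $\tilde a=M(c)$, choose the near-optimal competitors directly as lifts $\tilde\gamma_n:=(\gamma_n,0)$ of near-optimal $\gamma_n\in\Gamma_c$; then conclusion (ii) of Lemma~\ref{lem 2.18} yields $|t_n|\le2\delta_n\to0$, whence $e^{|t_n|}\to1$ and \eqref{E25} applies. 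A small further imprecision: $\tilde{\Phi}$ is not even in $v$ because of the linear term $-e^t\int_{\R^2} g(x)v(e^tx)\,\mathrm{d}x$; what one actually uses (from $g\ge0$) is the inequality $\Phi(|u|)\le\Phi(u)$, so replacing $\gamma_n$ by $|\gamma_n|$ keeps the path in $\Gamma_c$ (recall $u_c\ge0$) without increasing its maximum, and the nonnegativity of $u_n$ then follows along the lines you indicate.
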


\par
	To give the sharp energy estimate of $M(c)$,  we define the following Moser type functions $\tilde{w}_n(x)$ supported in $B_1(0)$:
	\begin{equation}\label{wnt}
		\tilde{w}_n(x)=\frac{1}{\sqrt{2\pi}}
		\begin{cases}
			\sqrt{\log n}, \ \ & 0\le |x|\le 1/n;\\
			\frac{\log(1/|x|)}{\sqrt{\log n}}, \ \ & 1/n\le |x|\le 1;\\
			0, \ \ & |x|\ge 1.
		\end{cases}
	\end{equation}
	Computing directly, we get that
	\begin{equation}\label{wnt1}
		\|\nabla \tilde{w}_n\|_2^2=\int_{\R^2}|\nabla \tilde{w}_n|^2\mathrm{d}x = 1,
	\end{equation}
	\begin{align}\label{wnt2}
		\|\tilde{w}_n\|_2^2
		&  = \int_{\R^2}|\tilde{w}_n|^2\mathrm{d}x
		=  \log n\int_{0}^{1/n}r\mathrm{d}r+\int_{1/n}^{1}\frac{\log^2(1/r)}{\log n}r\mathrm{d}r\nonumber\\
		&  =  \frac{1}{4\log n}-\frac{1}{4n^2\log n}-\frac{1}{2n^2}
	\end{align}
	and
	\begin{align}\label{wnt3}
		\|\tilde{w}_n\|_4^4
&= \int_{\R^2}\tilde{w}_n^4\mathrm{d}x=  \frac{\log^2 n}{2\pi}\int_{0}^{1/n}r\mathrm{d}r+\frac{1 }{2\pi\log^2 n }\int_{1/n}^{1}\log^4(1/r)r\mathrm{d}r\nonumber\\
		&=\frac{1}{2\pi}\left[\frac{3}{4(\log n)^2}-\frac{\log n}{n^2}-\frac{3}{2n^2}-\frac{3}{2n^2\log n}-\frac{3}{4n^2(\log n)^2}\right].
	\end{align}
	Moreover, \eqref{Bu1}, \eqref{wnt2} and \eqref{wnt3} give
	\begin{equation}\label{wnt4}
		B(\tilde{w}_n)\le \frac{1}{16\pi}\|\tilde{w}_n\|_2^2\|\tilde{w}_n\|_4^4
		\le O\left(\frac{1}{\log^3 n}\right).
	\end{equation}
	
	\begin{lemma}\label{lem 3.14}
		Assume that {\rm(G1)-(G4)} hold. Then for each $c \in (0, c_0)$, there holds
		\begin{equation}\label{Mmc}
			M(c)< m(c)+2\pi.
		\end{equation}
	\end{lemma}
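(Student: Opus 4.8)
The plan is to verify \eqref{Mmc} by exhibiting, for every large $n$, a path $\gamma_n\in\Gamma_{c}$ (see \eqref{Ga1}) whose maximal $\Phi$-value is strictly below $m(c)+2\pi$; then $M(c)\le\max_{t\in[0,1]}\Phi(\gamma_n(t))<m(c)+2\pi$. The number $2\pi$ is the natural target because the nonlinearity $(e^{u^2}-1)u$ has critical exponent $\alpha_0=1$ in the sense of \eqref{Cr1}, so $2\pi$ is the Trudinger--Moser compactness threshold attached to it. Throughout I keep the $L^2$-norm preserving dilation $\beta(\cdot,t)$ of \eqref{Be} at the centre of the construction, since $\beta(v,\cdot)$ is automatically a curve in $\mathcal{S}_c$ when $v\in\mathcal{S}_c$.

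\textbf{Construction of the path.} Let $u_c$ be the first minimizer from Lemma~\ref{lem 3.6} (positive, radially non-increasing, $\Phi(u_c)=m(c)$), and let $\tilde w_n$ be the Moser functions of \eqref{wnt}. I would set
\[
v_{n,t}:=\sqrt{c}\,\frac{u_c+\beta(\tilde w_n,t)}{\|u_c+\beta(\tilde w_n,t)\|_2}\in\mathcal{S}_c ,\qquad t\in\R,
\]
i.e. dilate only the concentrating factor $\tilde w_n$ while keeping $u_c$ fixed (applying $\beta$ to $u_c$ as well would make the gradient term $e^{2t}\|\nabla u_c\|_2^2$ uncontrollable at the critical dilation $e^{2t}\approx4\pi$). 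By \eqref{wnt2}, $\|\tilde w_n\|_2^2=O(1/\log n)$, and $\int_{\R^2}u_c\beta(\tilde w_n,t)\,\mathrm{d}x\to0$ as $n\to\infty$ for fixed $t$, so the normalising factor $\to1$; the nonlocal term is controlled via \eqref{Bu1} and \eqref{wnt3}--\eqref{wnt4}. Since $F(s)=\tfrac12(e^{s^2}-1-s^2)$ grows faster than every power of $s$, $\Phi(v_{n,t})\to-\infty$ as $t\to+\infty$, so I may fix $T_n$ with $\Phi(v_{n,T_n})<2m(c)$. Then let $\gamma_n$ be the concatenation of a short $L^2$-renormalised segment joining $u_c$ to $v_{n,-T}$ (with $T$ large and fixed; there $\|v_{n,-T}-u_c\|\to0$ and $\Phi$ stays within $o(1)$ of $m(c)$) with the dilation arc $t\mapsto v_{n,t}$, $t\in[-T,T_n]$, reparametrised onto $[0,1]$; thus $\gamma_n\in\Gamma_c$.

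\textbf{Energy estimate.} It remains to bound $\max_t\Phi(v_{n,t})$, and the crucial point is to expand $\Phi$ around $u_c$ rather than to estimate crudely. Writing $v_{n,t}=(1-\epsilon_n)\big(u_c+\beta(\tilde w_n,t)\big)$ with $\epsilon_n=O(1/\sqrt{\log n})$ and using $\Phi(u_c)=m(c)$, $\|\nabla\tilde w_n\|_2^2=1$ (from \eqref{wnt1}), one obtains an expansion of the form
\[
\Phi(v_{n,t})=m(c)+\tfrac12 e^{2t}-\mathcal{E}_n(t)+R_n(t),
\]
where $\mathcal{E}_n(t)=\int_{\R^2}\big(F(v_{n,t})-F(u_c)\big)\,\mathrm{d}x$ is the Trudinger--Moser obstruction (nonnegative once $e^{2t}$ is of order $4\pi$, and $\to+\infty$ for $e^{2t}$ large) and $R_n(t)$ collects the gradient cross-term $\langle\nabla u_c,\nabla\beta(\tilde w_n,t)\rangle$, the nonlocal correction, the $\int_{\R^2}g\,v_{n,t}\,\mathrm{d}x$-correction, and the $L^2$-renormalisation corrections. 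For $e^{2t}$ large, $\mathcal{E}_n(t)$ forces $\Phi(v_{n,t})\to-\infty$; for $e^{2t}\le4\pi-\eta$ (any fixed $\eta>0$), $\mathcal{E}_n(t)\ge -o(1)$ and $R_n(t)=O(1/\sqrt{\log n})$, so $\Phi(v_{n,t})\le m(c)+2\pi-\tfrac{\eta}{2}+O(1/\sqrt{\log n})<m(c)+2\pi$ for $n$ large; hence the maximum is attained for $e^{2t}$ in a thin window below $4\pi$. In that window I would use the precise values \eqref{wnt1}--\eqref{wnt3}, the fact that $u_c(0)=\|u_c\|_\infty>0$ inflates $F(v_{n,t})=\tfrac12(e^{v_{n,t}^2}-1-v_{n,t}^2)$ precisely where $\tilde w_n$ concentrates, and the hypotheses (G1), (G3)--(G4) together with $g,u_c\ge0$ to bound $\int_{\R^2}g\,v_{n,t}\,\mathrm{d}x$, to show that $\max_t\big(\tfrac12 e^{2t}-\mathcal{E}_n(t)\big)=2\pi-\delta_n$ with $\delta_n>0$ of order $1/\sqrt{\log n}$, and that at the optimal $t$ the net of $R_n$ against $\delta_n$ is still negative. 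This gives $\max_t\Phi(v_{n,t})\le m(c)+2\pi-\tfrac12\delta_n<m(c)+2\pi$, and together with the bound on the initial segment, \eqref{Mmc} follows.

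\textbf{The main obstacle} is exactly this last balancing. The deficit $\delta_n$ is only of order $1/\sqrt{\log n}$, while several pieces of $R_n$ are a priori of the same order: the gradient cross-term $\langle\nabla u_c,\nabla\beta(\tilde w_n,t)\rangle$ is \emph{positive} of order $1/\sqrt{\log n}$ (because $u_c$ is radially decreasing), and the renormalisation affects both the gradient term and the exponential term. One therefore has to compute all of these quantities sharply enough to see that, at the optimal dilation, the positive corrections are outweighed by the negative contributions coming from the $e^{u_c^2}$-inflation of the exponential term and from the renormalisation of the gradient, while simultaneously keeping the nonlocal term negligible (using \eqref{wnt4}) and the $g$-term of controlled size and favourable sign (using (G3)--(G4)); this is where the interplay among the critical exponential growth, the nonlocal term and the nonhomogeneous perturbation must be disentangled by hand, and where the sharp bound on $B$ of the test function and the companion integral inequality alluded to in Remark~\ref{rem 1.2}(ii) enter.
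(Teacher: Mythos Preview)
Your outline is a plausible variant of the Moser-sequence argument, but it is genuinely incomplete at exactly the point you flag as the ``main obstacle,'' and the paper's proof avoids that obstacle by a different construction rather than by winning the $O(1/\sqrt{\log n})$ balance you set up.

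\smallskip
\textbf{Where the two arguments diverge.} The paper does \emph{not} dilate the Moser function and renormalise by a multiplicative factor. Instead it sets
\[
W_{n,t}(x):=u_c(\tau x)+t\,\tilde w_n(\tau x),\qquad \tau:=\|u_c+t\tilde w_n\|_2/\sqrt{c},
\]
i.e.\ $t$ is a \emph{scalar multiplier} of $\tilde w_n$ and the $L^2$-constraint is restored by the \emph{dilation} $x\mapsto\tau x$. This choice is not cosmetic: (G3)--(G4) are dilation hypotheses on $g$, and they are used in \eqref{in} precisely to control $\int g(x)u_c\,\mathrm{d}x-\int g(x)u_c(\tau x)\,\mathrm{d}x$. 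With your multiplicative renormalisation $(1-\epsilon_n)$ there is no obvious way to invoke (G3)--(G4), and the $g$-correction $+\epsilon_n\int g\,u_c$ sits in $\Phi$ with the wrong sign at order $1/\sqrt{\log n}$.

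\smallskip
\textbf{Why the paper does not need the $u_c$-inflation.} The decisive step in the paper is that \emph{all} first-order (i.e.\ $O(t/\sqrt{\log n})$) contributions cancel exactly. Expanding $\Phi(W_{n,t})$ one finds the linear-in-$t$ pieces assemble into $\langle\Phi'(u_c),t\tilde w_n\rangle=-\lambda_c t\int u_c\tilde w_n$ via \eqref{M28}; the $\tau$-renormalisation produces further $O(t/\sqrt{\log n})$ terms, and the Pohozaev identity \eqref{M06} together with \eqref{in} converts the combination into an error that is $t^2O(1/\log n)$. The outcome is \eqref{H30222}:
\[
\Phi(W_{n,t})\le m(c)+\Psi_n(t)+t^2O\!\left(\tfrac{1}{\log n}\right)+t^3O\!\left(\tfrac{1}{\log^{3/2}n}\right)+t^4O\!\left(\tfrac{1}{\log^{2}n}\right),
\]
with \emph{no} $O(1/\sqrt{\log n})$ remainder. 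Then the pure Moser energy $\Psi_n(t)=\tfrac{t^2}{4}-\tfrac{1}{2\tau^2}\int(e^{t^2\tilde w_n^2}-1-t^2\tilde w_n^2)$ is bounded by $2\pi-\tfrac{\pi}{\log n}\log\tfrac{\log n}{32\pi}$ at its maximum (Case~ii), and since the remainders are $O(1/\log n)$ the strict inequality follows. The $e^{u_c^2}$-inflation is deliberately discarded (the convexity inequality $F(a+b)\ge F(a)+F(b)+f(a)b$ is used as a one-sided bound), because it is not needed once the first-order terms are gone.

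\smallskip
\textbf{The gap in your scheme.} In your construction the cross gradient term, the $g$-renormalisation and the $L^2$-renormalisation of the exponential are all genuinely $O(1/\sqrt{\log n})$ with competing signs, while the pure Moser deficit (without inflation) is only $O(\log\log n/\log n)$; so you \emph{must} rely on the inflation to upgrade the deficit to $O(1/\sqrt{\log n})$ and then compare explicit constants. You state this as an obstacle and do not carry it out, so the proof is not complete. (Also, a minor point: $\|v_{n,-T}-u_c\|\not\to0$ as $T\to\infty$ for fixed $n$, since $\|\beta(\tilde w_n,-T)\|_2=\|\tilde w_n\|_2$; what is true, and sufficient, is that it is $O(1/\sqrt{\log n})$ uniformly in $T$.) If you want a clean argument, switch to the paper's test function $W_{n,t}$ with dilation renormalisation and exploit \eqref{M28}--\eqref{M06} and \eqref{in} to kill the $O(1/\sqrt{\log n})$ terms outright.
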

	
	\begin{proof} By Lemmas \ref{Lep} and \ref{lem 3.6}, we have
		\begin{equation}\label{M05}
			u_c\in H_{r}^1(\R^2), \ \ \|u_{c}\|_2^2=c, \ \ \ \ \Phi(u_{c})=m(c),
			\ \ \ \ u_{c}(x)>0, \ \ \forall \ x\in \R^2,
		\end{equation}
		\begin{align}\label{M28}
			&  \int_{\mathbb{R}^{2}}\frac{u_{c}^{2}}{|x|^{2}}\left(\int_{0}^{|x|}\frac{s}{2}u_{c}^{2}(s)\mathrm{d}s\right)
			\left(\int_{0}^{|x|}su_{c}(s)\tilde{w}_n(s)\mathrm{d}s\right)\mathrm{d}x
			+\int_{\mathbb{R}^{2}}\frac{u_c\tilde{w}_n}{|x|^{2}}\left(\int_{0}^{|x|}\frac{s}{2}u_{c}^{2}(s)\mathrm{d}s\right)^2\mathrm{d}x\nonumber\\
			&  =   \int_{\R^2}\left[\left(e^{u_c^2}-1\right)u_c-\lambda_cu_c\right]\tilde{w}_n\mathrm{d}x+\int_{\R^2}g(x)\tilde{w}_n\mathrm{d}x-\int_{\R^2}\nabla u_{c}\cdot \nabla \tilde{w}_n\mathrm{d}x
		\end{align}
		and
		\begin{equation}\label{M06}
			\lambda_cc=\int_{\R^2}\left(e^{u_{c}^2}-1-u_{c}^2\right)\mathrm{d}x+\int_{\R^2}\left[2g(x)+\nabla g(x)\cdot x\right]u_c\mathrm{d}x-2B(u_c).
		\end{equation}
		Moreover, using {\rm(G1)}, \eqref{wnt2} and Lemma \ref{lem 2.3} i), we can easily check that
		\begin{equation}\label{M25+}
			\int_{\R^2}u_{c}\tilde{w}_n\mathrm{d}x
			\le   \|u_c\|_{2}\|\tilde{w}_n\|_2=O\left(\frac{1}{\sqrt{\log n}}\right),  \ \ n\to\infty,
		\end{equation}
\begin{equation}\label{M251}
			\int_{\R^2}g(x)\tilde{w}_n\mathrm{d}x
			\le   \|g(x)\|_{\frac{4}{3}}\|\tilde{w}_n\|_4=O\left(\frac{1}{\sqrt{\log n}}\right),  \ \ n\to\infty
		\end{equation}
		and
		\begin{equation}\label{M26}
			\int_{\R^2}\left(e^{u_c^2}-1\right)u_c\tilde{w}_n\mathrm{d}x
			=   O\left(\frac{1}{\sqrt{\log n}}\right),  \ \ n\to\infty.
		\end{equation}
		It follows from  \eqref{wnt2} and \eqref{M05}  that
		\begin{align}\label{M29}
			\|u_{c}+t\tilde{w}_n\|_2^2
			&  =    c+t^2\|\tilde{w}_n\|_2^2+2t\int_{\R^2}u_c\tilde{w}_n\mathrm{d}x\nonumber\\
			&  =    c+2t\int_{\R^2}u_c\tilde{w}_n\mathrm{d}x +t^2\left[O\left(\frac{1}{\log n}\right)\right], \ \ n\to\infty.
		\end{align}
		Let $\tau:=\|u_{c}+t\tilde{w}_n\|_2/\sqrt{c}$. Then, combining  \eqref{M25+} and \eqref{M29}, we obtain that  for any $p>0$,
		\begin{equation}\label{tau122}
			\tau^p = 1+\frac{pt}{c}\int_{\R^2}u_c\tilde{w}_n\mathrm{d}x +t^2\left[O\left(\frac{1}{\log n}\right)\right]
			, \ \ \ \ \mbox {for large } n\in \N.
		\end{equation}
In particular,  for the case $p=2$, we have
\begin{equation}\label{tau1}
			\tau^2 = 1+\frac{2t}{c}\int_{\R^2}u_c\tilde{w}_n\mathrm{d}x +t^2\left[O\left(\frac{1}{\log n}\right)\right]
			\le 1+t+t^2, \ \ \ \ \mbox {for large } n\in \N.
		\end{equation}
Now, we define
		\begin{equation}\label{Wn1}
			W_{n,t}(x):=u_{c}(\tau x)+t\tilde{w}_n(\tau x).
		\end{equation}
		Then
		\begin{equation}\label{Wnc}
			\|W_{n,t}\|_2^2=\tau^{-2}\|u_{c}+t\tilde{w}_n\|_2^2=c.
		\end{equation}
This means $W_{n,t}\in \mathcal{S}_c$ for all $t>0$.
		Besides, it is  easy to check that
		\begin{equation}\label{Wn2}
			\|\nabla W_{n,t}\|_2^2=\|\nabla(u_{c}+t\tilde{w}_n)\|_2^2
		\end{equation}
		and
		\begin{equation}\label{Wn4}
			\int_{\R^2}\left(e^{W_{n,t}^2}-1-W_{n,t}^2\right)\mathrm{d}x
			=   \f{1}{\tau^{2}}\int_{\R^2}\left[e^{(u_{c}+t\tilde{w}_n)^2}-1-(u_{c}+t\tilde{w}_n)^2\right]\mathrm{d}x.
		\end{equation}
		Next, we shall construct a fine path belonging to $\Gamma_c$ with the help of $W_{n,t}$ to establish \eqref{Mmc}.
		To this end, we first  give an upper estimate of $\max_{t\ge 0}\Phi(W_{n,t})$.
		From \eqref{fai}, \eqref{Wn1}-\eqref{Wn4} and {\rm(G1)},
		we derive that for all $t> 0$,
\begin{align}\label{H3011}
&       \Phi(W_{n,t})\nonumber\\
			&  =    \frac{1}{2}\|\nabla W_{n,t}\|_2^{2}+\frac{1}{2}B(W_{n,t})-\int_{\R^2} g(x)W_{n,t}\mathrm{d}x
			-\frac{1}{2}\int_{\R^2}\left(e^{W_{n,t}^2}-1-W_{n,t}^2\right)\mathrm{d}x\nonumber\\
			&  =    \frac{1}{2}\|\nabla (u_{c}+t\tilde{w}_n)\|_2^{2}+\frac{1}{2}\tau^{-4}B(u_{c}+t\tilde{w}_n)
			-\int_{\R^2} g(x) \left[  u_{c}(\tau x)+t\tilde{w}_n(\tau x) \right]   \mathrm{d}x\nonumber\\
			&    \ \ \ -\frac{1}{2}\tau^{-2}\int_{\R^2}\left[e^{(u_{c}+t\tilde{w}_n)^2}-1-(u_{c}+t\tilde{w}_n)^2\right]\mathrm{d}x.
\end{align}
Moreover, it follows  from \eqref{Bu}  and \eqref{tau122} that
		\begin{align}\label{BWt}
			& B(u_{c}+t\tilde{w}_n)-B(u_{c})\nonumber\\
			& = \int_{\R^2}\left[\frac{(u_{c}+t\tilde{w}_n)^{2}}{|x|^{2}}\left(\int_0^{|x|}\frac{s}{2}(u_{c}(s)+t\tilde{w}_n(s))^{2}\mathrm{d}s\right)^{2}
			-\frac{u_{c}^{2}}{|x|^{2}}\left(\int_0^{|x|}\frac{s}{2}u_{c}^{2}(s)\mathrm{d}s\right)^{2}\right]\mathrm{d}x\nonumber\\
			& = \int_{\R^2}\left\{\frac{(u_{c}+t\tilde{w}_n)^{2}-u_{c}^2}{|x|^{2}}\left(\int_0^{|x|}\frac{s}{2}(u_{c}(s)+t\tilde{w}_n(s))^{2}\mathrm{d}s\right)^{2}\right.\nonumber\\
			& \ \ \ \  +\left.\frac{u_{c}^{2}}{|x|^{2}}\left[\left(\int_0^{|x|}\frac{s}{2}(u_{c}(s)+t\tilde{w}_n(s))^{2}\mathrm{d}s\right)^{2}
			-\left(\int_0^{|x|}\frac{s}{2}u_{c}^{2}(s)\mathrm{d}s\right)^{2}\right]\right\}\mathrm{d}x\nonumber\\
			& = \int_{\R^2}\left\{\frac{2tu_{c}\tilde{w}_n+t^2\tilde{w}_n^2 }{|x|^{2}}\left(\int_0^{|x|}\frac{s}{2}(u_{c}(s)+t\tilde{w}_n(s))^{2}\mathrm{d}s\right)^{2}\right.\nonumber\\
			& \ \ \ \  +\left.\frac{u_{c}^{2}}{|x|^{2}}\left[\left(\int_0^{|x|}\frac{s}{2}(u_{c}(s)+t\tilde{w}_n(s))^{2}\mathrm{d}s\right)^{2}
			-\left(\int_0^{|x|}\frac{s}{2}u_{c}^{2}(s)\mathrm{d}s\right)^{2}\right]\right\}\mathrm{d}x\nonumber\\
			& = \int_{\R^2}\left\{\frac{2tu_{c}\tilde{w}_n+t^2\tilde{w}_n^2 }{|x|^{2}}
			\left(\int_0^{|x|}\frac{s}{2}\left(u_{c}^2(s)+2tu_{c}(s)\tilde{w}_n(s)+t^2\tilde{w}^{2}_n(s)\right)\mathrm{d}s\right)^{2}\right.\nonumber\\
			& \ \ \ \  +\left.\frac{u_{c}^{2}}{|x|^{2}}\int_0^{|x|}\frac{s}{2}\left[2tu_{c}(s)\tilde{w}_n(s)+t^2\tilde{w}^{2}_n(s)\right]\mathrm{d}s
			\int_0^{|x|}\frac{s}{2}\left[2u_{c}^2(s)+2tu_{c}(s)\tilde{w}_n(s)+t^2\tilde{w}^{2}_n(s)\right]\mathrm{d}s\right\}\mathrm{d}x\nonumber\\
			& =  2t\int_{\mathbb{R}^{2}}\frac{u_c\tilde{w}_n}{|x|^{2}}\left(\int_{0}^{|x|}\frac{s}{2}u_{c}^{2}(s)\mathrm{d}s\right)^2\mathrm{d}x
			+2t\int_{\mathbb{R}^{2}}\frac{u_{c}^{2}}{|x|^{2}}\left(\int_{0}^{|x|}\frac{s}{2}u_{c}^{2}(s)\mathrm{d}s\right)
			\left(\int_{0}^{|x|}su_{c}(s)\tilde{w}_n(s)\mathrm{d}s\right)\mathrm{d}x\nonumber\\
			& \ \ \ \  +t^2\left[O\left(\f{1}{\log n}\right)\right]
			+t^3\left[O\left(\f{1}{\log^{3/2} n}\right)\right]+t^4\left[O\left(\f{1}{\log^{2} n}\right)\right],
		\end{align}
		where, to derive the last equality,  we used the H\"older  inequality and
		\begin{align*}
			\int_{0}^{|x|}su(s)v(s)\mathrm{d}s
			& \le  \left(\int_{0}^{|x|}s \mathrm{d}s\right)^{1/2} \left(\int_{0}^{|x|}su^2(s)v^2(s)\mathrm{d}s\right)^{1/2}\\
			& \leq \frac{1}{2\sqrt{\pi}}|x|\left(\int_{\R^2}u^2v^2\mathrm{d}x\right)^{1/2}
			\le \frac{1}{2\sqrt{\pi}}|x|\|u\|_4\|v\|_4, \ \ \ \ \forall\ u,v \in H^1_r(\R^2).
		\end{align*}
Let us define the following function:
		\begin{equation}\label{Pst}
			\Psi_n(t) :=   \frac{t^2}{4}
			-\frac{1}{2\tau^2}\int_{\R^2}\left(e^{t^2\tilde{w}_n^2}-1-t^2\tilde{w}_n^2\right)\mathrm{d}x, \ \ \forall \ t >0.
		\end{equation}
Thus, from  \eqref{M05}-\eqref{M06} and \eqref{tau122}-\eqref{Pst}, we obtain that for any $t>0$,
		\begin{align}\label{H30}
			&       \Phi(W_{n,t})\nonumber\\
	& \le   \frac{1}{2}\|\nabla u_{c}\|_2^{2}+\frac{1}{2}\tau^{-4}B(u_{c})
			-\frac{1}{2\tau^{2}}\int_{\R^2}\left(e^{u_{c}^2}-1-u_{c}^2\right)\mathrm{d}x-\int_{\R^2} g(x)\left[u_{c}(\tau x)+t\tilde{w}_n(\tau x)\right]\mathrm{d}x\nonumber\\
			&       \ \ \ \  +\frac{t^2}{2}\|\nabla \tilde{w}_n\|_2^{2}
			-\frac{1}{2}\tau^{-2}\int_{\R^2}\left(e^{t^2\tilde{w}_n^2}-1-t^2\tilde{w}_n^2\right)\mathrm{d}x
            +t\tau^{-4}\int_{\mathbb{R}^{2}}\frac{u_c\tilde{w}_n}{|x|^{2}}\left(\int_{0}^{|x|}\frac{s}{2}u_{c}^{2}(s)\mathrm{d}s\right)^2\mathrm{d}x\nonumber\\
			&      \ \ \ \  +t\tau^{-4} \int_{\mathbb{R}^{2}}\frac{u_{c}^{2}}{|x|^{2}}\left(\int_{0}^{|x|}\frac{s}{2}u_{c}^{2}(s)\mathrm{d}s\right)
			\left(\int_{0}^{|x|}su_{c}(s)\tilde{w}_n(s)\mathrm{d}s\right)\mathrm{d}x\nonumber\\
			&       \ \ \ \ +t\int_{\R^2}\nabla u_{c}\cdot \nabla \tilde{w}_n\mathrm{d}x
			-\tau^{-2}t\int_{\R^2}\left(e^{u_c^2}-1\right)u_c\tilde{w}_n\mathrm{d}x\nonumber\\
			&       \ \ \ \  +t^2\left[O\left(\f{1}{\log n}\right)\right]
			+t^3\left[O\left(\f{1}{\log^{3/2} n}\right)\right]+t^4\left[O\left(\f{1}{\log^{2} n}\right)\right]\nonumber\\
&  =   \Phi(u_{c})+\Psi_n(t)+ \langle \Phi'(u_{c}), t \tilde{w}_n\rangle
			-(1-\tau^{-4})\left\{\frac{1}{2}B(u_{c})\right.\nonumber\\
			&      \ \ \ \ +t\left.\int_{\mathbb{R}^{2}}\left[\frac{u_c\tilde{w}_n}{|x|^{2}}
			\left(\int_{0}^{|x|}\frac{s}{2}u_{c}^{2}(s)\mathrm{d}s\right)^2+\frac{u_{c}^{2}}{|x|^{2}}
			\left(\int_{0}^{|x|}\frac{s}{2}u_{c}^{2}(s)\mathrm{d}s\right)
			\left(\int_{0}^{|x|}su_{c}(s)\tilde{w}_n(s)\mathrm{d}s\right)\right]\mathrm{d}x\right\}\nonumber\\
			&      \ \ \ \  +\frac{1-\tau^{-2}}{2}\int_{\R^2}\left(e^{u_{c}^2}-1-u_{c}^2\right)\mathrm{d}x
			+\left(1-\tau^{-2}\right)t\int_{\R^2}
			\left(e^{u_c^2}-1\right)u_c \tilde{w}_n \mathrm{d}x\nonumber\\
& \ \ \ \  +  \int_{\R^2}g(x)(u_c+t \tilde{w}_n)\mathrm{d}x-\int_{\R^2}g(x)\left[u_c(\tau x)+t \tilde{w}_n(\tau x)\right]\mathrm{d}x \nonumber\\
			&       \ \ \ \  +t^2\left[O\left(\f{1}{\log n}\right)\right]
			+t^3\left[O\left(\f{1}{\log^{3/2} n}\right)\right]+t^4\left[O\left(\f{1}{\log^{2} n}\right)\right]\nonumber\\
&  =   m(c)+\Psi_n(t)-\lambda_ct\int_{\R^2}u_c\tilde{w}_n\mathrm{d}x\nonumber\\
			&      \ \ \ \ -\left\{1-\left[1+\frac{2t}{c}\int_{\R^2}u_c\tilde{w}_n\mathrm{d}x
			+t^2\left(O\left(\frac{1}{\log n}\right)\right)\right]^{-2}\right\}\left\{\frac{1}{2}B(u_{c})\right.\nonumber\\
			&      \ \ \ \ +t\left.\int_{\mathbb{R}^{2}}\left[\frac{u_c\tilde{w}_n}{|x|^{2}}
			\left(\int_{0}^{|x|}\frac{s}{2}u_{c}^{2}(s)\mathrm{d}s\right)^2+\frac{u_{c}^{2}}{|x|^{2}}
			\left(\int_{0}^{|x|}\frac{s}{2}u_{c}^{2}(s)\mathrm{d}s\right)
			\left(\int_{0}^{|x|}su_{c}(s)\tilde{w}_n(s)\mathrm{d}s\right)\right]\mathrm{d}x\right\}\nonumber\\
			&      \ \ \ \ +\frac{1}{2}\left\{1-\left[1+\frac{2t}{c}\int_{\R^2}u_c\tilde{w}_n\mathrm{d}x
			+t^2\left(O\left(\frac{1}{\log n}\right)\right)\right]^{-1}\right\}
			\int_{\R^2}\left(e^{u_{c}^2}-1-u_{c}^2\right)\mathrm{d}x\nonumber\\
			&      \ \ \ \ +\left\{1-\left[1+\frac{2t}{c}\int_{\R^2}u_c\tilde{w}_n\mathrm{d}x
			+t^2\left(O\left(\frac{1}{\log n}\right)\right)\right]^{-1}\right\}t\int_{\R^2}
			\left(e^{u_c^2}-1\right)u_c\tilde{w}_n\mathrm{d}x\nonumber\\
& \ \ \ \  +  \int_{\R^2}g(x)(u_c+t\tilde{w}_n)\mathrm{d}x-\int_{\R^2}g(x)\left[u_c(\tau x)+t\tilde{w}_n(\tau x)\right]\mathrm{d}x \nonumber\\
			&       \ \ \ \  +t^2\left[O\left(\f{1}{\log n}\right)\right]
			+t^3\left[O\left(\f{1}{\log^{3/2} n}\right)\right]+t^4\left[O\left(\f{1}{\log^{2} n}\right)\right]\nonumber\\
& \le   m(c)+\Psi_n(t)-\lambda_ct\int_{\R^2}u_c\tilde{w}_n\mathrm{d}x\nonumber\\
			&      \ \ \ \ -\left[\f{4t}{c}\int_{\R^2}u_c\tilde{w}_n\mathrm{d}x
			+t^2\left(O\left(\f{1}{\log n}\right)\right)\right]\left\{\frac{1}{2}B(u_{c})\right.\nonumber\\
			&      \ \ \ \ +t\left.\int_{\mathbb{R}^{2}}\left[\frac{u_c\tilde{w}_n}{|x|^{2}}
			\left(\int_{0}^{|x|}\frac{s}{2}u_{c}^{2}(s)\mathrm{d}s\right)^2+\frac{u_{c}^{2}}{|x|^{2}}
			\left(\int_{0}^{|x|}\frac{s}{2}u_{c}^{2}(s)\mathrm{d}s\right)
			\left(\int_{0}^{|x|}su_{c}(s)\tilde{w}_n(s)\mathrm{d}s\right)\right]\mathrm{d}x\right\}\nonumber\\
			&      \ \ \ \ +\left[\frac{t}{c}\int_{\R^2}u_c\tilde{w}_n\mathrm{d}x
			+t^2\left(O\left(\frac{1}{\log n}\right)\right)\right]
			\int_{\R^2}\left(e^{u_{c}^2}-1-u_{c}^2\right)\mathrm{d}x\nonumber\\
			&      \ \ \ \ +\left[\frac{2t}{c}\int_{\R^2}u_c\tilde{w}_n\mathrm{d}x
			+t^2\left(O\left(\frac{1}{\log n}\right)\right)\right]t\int_{\R^2}
			\left(e^{u_c^2}-1\right)u_c\tilde{w}_n\mathrm{d}x\nonumber\\
& \ \ \ \  +  \int_{\R^2}g(x)(u_c+t\tilde{w}_n)\mathrm{d}x-\int_{\R^2}g(x)\left[u_c(\tau x)+t\tilde{w}_n(\tau x)\right]\mathrm{d}x \nonumber\\
			&       \ \ \ \  +t^2\left[O\left(\f{1}{\log n}\right)\right]
			+t^3\left[O\left(\f{1}{\log^{3/2} n}\right)\right]+t^4\left[O\left(\f{1}{\log^{2} n}\right)\right]\nonumber\\
&  =      m(c)+\Psi_n(t)
       +\frac{2t^2}{c} \left(\int_{\R^2}u_c\tilde{w}_n\mathrm{d}x\right)
\left[\int_{\R^2}\left(e^{u_c^2}-1\right)u_c\tilde{w}_n\mathrm{d}x\right]\nonumber\\
        &      \ \ \ \  -\f{4t^2}{c}\left(\int_{\R^2}u_c\tilde{w}_n\mathrm{d}x\right)
			\int_{\mathbb{R}^{2}}\left[\frac{u_{c}^{2}}{|x|^{2}}
			\left(\int_{0}^{|x|}\frac{s}{2}u_{c}^{2}(s)\mathrm{d}s\right)
			\left(\int_{0}^{|x|}su_{c}(s)\tilde{w}_n(s)\mathrm{d}s\right)\right. \nonumber\\
        &\ \ \ \ +\left.\frac{u_c\tilde{w}_n}{|x|^{2}} \left(\int_{0}^{|x|}\frac{s}{2}u_{c}^{2}(s)\mathrm{d}s\right)^2\right]\mathrm{d}x
		+\int_{\R^2}g(x)u_c\mathrm{d}x-\int_{\R^2}g(x)u_c(\tau x)\mathrm{d}x \nonumber\\
        & \ \ \ \ -\frac{t}{c}\left(\int_{\R^2}u_c\tilde{w}_n\mathrm{d}x\right)  \int_{\R^2} \left[2g(x)+
                            \nabla g(x) \cdot x \right] u_c \mathrm{d}x \nonumber\\
        &    \ \ \ \ +t^2\left[O\left(\f{1}{\log n}\right)\right]
			+t^3\left[O\left(\f{1}{\log^{3/2} n}\right)\right]+t^4\left[O\left(\f{1}{\log^{2} n}\right)\right]
		\end{align}
By {\rm(G1)}-{\rm(G4)}, \eqref{M25+}\ and\ \eqref{tau122}, we can deduce that
\begin{align}\label{in}
 &\int_{\R^2}g(x)\left( u_c-u_c(\tau x) \right)\mathrm{d}x -\frac{t}{c}\left(\int_{\R^2}u_c\tilde{w}_n\mathrm{d}x\right)  \int_{\R^2} \left[2g(x)+
\nabla g(x) \cdot x \right] u_c \mathrm{d}x \nonumber\\
  &  =\int_{\R^2}g(x)u_c\mathrm{d}x-\frac{1}{\tau^2}\int_{\R^2}g(x)u_c\mathrm{d}x+\frac{1}{\tau^2}\int_{\R^2}g(x)u_c\mathrm{d}x
  -\frac{1}{\tau^2}\int_{\R^2}g\left(\frac{x}{\tau}\right)u_c\mathrm{d}x\nonumber\\
  &\ \ \ \ -\frac{t}{c}\left(\int_{\R^2}u_c\tilde{w}_n\mathrm{d}x\right)  \int_{\R^2} \left[2g(x)+
\nabla g(x) \cdot x \right] u_c \mathrm{d}x\nonumber\\
 & \le  \frac{1}{\tau^2}\int_{\R^2}\left(g(x)-g\left(\frac{x}{\tau}\right)\right)u_c\mathrm{d}x
  -\frac{t}{c}\left(\int_{\R^2}u_c\tilde{w}_n\mathrm{d}x\right)  \int_{\R^2} \left[\nabla g(x) \cdot x \right] u_c \mathrm{d}x\nonumber\\
   &  \leq \frac{\tau-1}{\tau^3}\int_{\R^2}\left[\nabla g(x) \cdot x \right] u_c \mathrm{d}x-\frac{t}{c}\left(\int_{\R^2}u_c\tilde{w}_n\mathrm{d}x\right)  \int_{\R^2} \left[\nabla g(x) \cdot x \right] u_c \mathrm{d}x\nonumber\\
    & =   \frac{t}{{\tau}^3c}\left(\int_{\R^2}u_c\tilde{w}_n\mathrm{d}x\right)\int_{\R^2}\left[\nabla g(x) \cdot x \right] u_c \mathrm{d}x\nonumber\\
    &\ \ \ \ -\frac{t}{c}\left(\int_{\R^2}u_c\tilde{w}_n\mathrm{d}x\right)  \int_{\R^2} \left[\nabla g(x) \cdot x \right] u_c \mathrm{d}x+ t^2\left[O\left(\f{1}{\log n}\right)\right]\nonumber\\
    & =   t\left[O\left(\f{1}{\sqrt{\log n}}\right)\right]\left(1- \frac{1}{{\tau}^3}  \right)+ t^2\left[O\left(\f{1}{\log n}\right)\right] \nonumber\\
       &  =   t^2\left[O\left(\f{1}{\log n}\right)\right]+t^3\left[O\left(\f{1}{\log^{3/2} n}\right)\right].
 \end{align}
 Combining \eqref{H30} and \eqref{in}, it is easy to verify that for any $t>0$
 \begin{align}\label{H30222}
			&       \Phi(W_{n,t}) \leq
  m(c)+\Psi_n(t)+t^2\left[O\left(\f{1}{\log n}\right)\right]
			+t^3\left[O\left(\f{1}{\log^{3/2} n}\right)\right]+t^4\left[O\left(\f{1}{\log^{2} n}\right)\right].
\end{align}
		In the sequel, we assume that all inequalities hold for large $n\in \N$ without mentioning.
		We claim that
		\begin{align}\label{Psi}
			\sup_{t>0}\left[\Psi_n(t)+t^2\left(O\left(\frac{1}{\log n}\right)\right)
			+t^3\left(O\left(\frac{1}{\log^{3/2} n}\right)\right)+t^4\left(O\left(\f{1}{\log^{2} n}\right)\right)\right]< 2\pi.
		\end{align}
		Next, we will distinguish the following three cases to prove this claim.
		
		\smallskip
		\smallskip
		Case i):  \ $t\in \left[0,\sqrt{2\pi}\right]$. From \eqref{Pst}, one  has
		\begin{align}\label{H16}
			& \Psi_n(t)+t^2\left(O\left(\frac{1}{\log n}\right)\right)
			+t^3\left(O\left(\frac{1}{\log^{3/2} n}\right)\right)+t^4\left(O\left(\f{1}{\log^{2} n}\right)\right)\nonumber\\
			&  \le   \frac{t^2}{2}  +O\left(\frac{1}{\log  n}\right)
			\le \frac{3}{2} \pi.
		\end{align}
		
		\smallskip
		Case ii): \ $t\in \left[\sqrt{2\pi}, \sqrt{6\pi}\right)$. It follows from \eqref{wnt}
		and \eqref{tau1} that
		\begin{align}\label{H17}
			\frac{1}{\tau^2}\int_{\R^2}\left(e^{t^2\tilde{w}_n^2}-1-t^2\tilde{w}_n^2\right)\mathrm{d}x
			\ge  \frac{1}{2\tau^2}\int_{B_{1/n}}e^{t^2\tilde{w}_n^2}\mathrm{d}x
			\ge  \frac{1}{16n^2}e^{(2\pi)^{-1}t^2\log n},
		\end{align}
		which, together with \eqref{Pst}, yields
		\begin{align}\label{H18}
			\Psi_n(t)  \le   \frac{t^2}{2}-\frac{1}{32n^2}e^{(2\pi)^{-1}t^2\log n}
			:=   \varphi_n(t).
		\end{align}
		Let $t_n>0$ be such that $\varphi_n'(t_n)=0$. Then
		\begin{equation}\label{H19}
			1=\frac{\log n}{32\pi n^2}e^{(2\pi)^{-1}t_n^2\log n},
		\end{equation}
		which shows
		\begin{align}\label{H20}
			t_n^2 =    4\pi\left[1+\frac{\log (32\pi)-\log(\log n)}{2\log n}\right]
		\end{align}
		and
		\begin{equation}\label{H21}
			\varphi_n(t)\le \varphi_n(t_n)=\frac{t_n^2}{2}-\frac{\pi}{\log n}, \ \ \forall \ t\ge 0.
		\end{equation}
		Thus  from  \eqref{H18}, \eqref{H20} and \eqref{H21}, we have
		\begin{equation*}\label{H24}
			\Psi_n(t)\le \varphi_n(t)\le  \frac{t_n^2}{2}-\frac{\pi}{\log n} = 2\pi-\frac{\pi}{\log n}\log\frac{e\log n}{32\pi},
		\end{equation*}
		and so
		\begin{align*}
			& \Psi_n(t)+t^2\left(O\left(\frac{1}{\log n}\right)\right)
			+t^3\left(O\left(\frac{1}{\log^{3/2} n}\right)\right)+t^4\left(O\left(\f{1}{\log^{2} n}\right)\right)\nonumber\\
			&   \le  2\pi-\frac{\pi}{\log n}\log\frac{\log n}{32\pi}\ \ \hbox{for}\ \ t\in \left[\sqrt{2\pi}, \sqrt{6\pi}\right).
		\end{align*}
		
		\smallskip
		Case iii): \ $t\in \left(\sqrt{6\pi}, +\infty\right)$. By  \eqref{wnt}
		and \eqref{tau1},  we deduce that
		\begin{align}\label{H25}
			& \Psi_n(t)+t^2\left(O\left(\frac{1}{\log n}\right)\right)
			+t^3\left(O\left(\frac{1}{\log^{3/2} n}\right)\right)+t^4\left(O\left(\f{1}{\log^{2} n}\right)\right)\nonumber\\
			& \le   \frac{t^4}{8\pi} -\frac{1}{2\tau^2}\int_{\R^2}\left(e^{t^2\tilde{w}_n^2}-1-t^2\tilde{w}_n^2\right)\mathrm{d}x\nonumber\\
			& \le   \frac{t^4}{8\pi}-\frac{\pi}{4n^2\tau^2}e^{(2\pi)^{-1}t^2\log n}\nonumber\\
			& \le   \frac{t^4}{8\pi}-\frac{\pi}{4n^2(1+t+t^2)}e^{(2\pi)^{-1}t^2\log n}\nonumber\\
			& \le   \frac{9\pi}{2}-\frac{\pi}{4n^2(1+\sqrt{6\pi}+6\pi)}e^{3\log n}\le \frac{3}{2}\pi,
		\end{align}
		where we have used the fact that the function
		$$
		\phi_n(t):=\frac{t^4}{8\pi}-\frac{\pi}{4n^2(1+t+t^2)}e^{(2\pi)^{-1}t^2\log n}
		$$
		is decreasing on $t\in \left(\sqrt{6\pi}, +\infty\right)$ for large $n$. In fact,
		$$
		\phi_n'(t)=\frac{t^3}{2\pi}-\frac{\left(1+t+t^2\right)t\log n-(1+2t)\pi}{4n^2\left(1+t+t^2\right)^2}
		e^{(2\pi)^{-1}t^2\log n}.
		$$
		Let $s_n>0$ be such that $\phi_n'(s_n)=0$ for large $n$. Then
		\begin{equation*}
			2s_n^3\left(1+s_n+s_n^2\right)^2
			=\frac{\pi\left(1+s_n+s_n^2\right)s_n\log n-(1+2s_n)\pi^2}{n^2} e^{(2\pi)^{-1}s_n^2\log n},
		\end{equation*}
		which gives
		\begin{equation*}
			s_n^2
			=    4\pi\left\{1+\frac{\log \left[2s_n^3\left(1+s_n+s_n^2\right)^2\right]
				-\log\left[\pi\left(1+s_n+s_n^2\right)s_n\log n-(1+2s_n)\pi^2\right]}{2\log n}\right\}.
		\end{equation*}
		This implies that $\lim_{n\to\infty}s_n^2=4\pi$. So $\phi_n(t)$ is decreasing on $t\in \left(\sqrt{6\pi}, +\infty\right)$ for large $n$.
		
		Hence, \eqref{Psi} follows from Cases i)-iii). This means that there exists a large number $\bar{n}\in\N$ such that
		\begin{equation}\label{H33}
			\sup_{t>0}\Phi(W_{\bar{n},t})< m(c)+2\pi.
		\end{equation}
		Moreover, it follows from \eqref{tau1}, \eqref{Wn1},  \eqref{H30} and \eqref{Psi}
		that $W_{\bar{n},t}\in \mathcal{S}_c$ for all $t>0$, $W_{\bar{n},0}=u_c$ and $\Phi(W_{\bar{n},t}) <2m(c)$ for large $t>0$. Thus, there exists
		a number $\hat{t}>0$  such that
		\begin{equation}\label{tau2}
			\Phi(W_{\bar{n},\hat{t}}) <2m(c).
		\end{equation}
		Let $\gamma_{\bar{n}}(t):=W_{\bar{n},t\hat{t}}$. Then $\gamma_{\bar{n}}\in \Gamma_{c}$, where $\Gamma_{c}$ is defined by
		\eqref{Ga1}. Hence,  \eqref{Mmc} follows from \eqref{Mu2} and \eqref{H33}, and the proof is completed.
	\end{proof}

	\begin{lemma}\label{lem 3.15}
		Let $u_n\rightharpoonup \hat{u}$ in $H_r^1(\R^2)$ and
		\begin{equation*}\label{fun1}
			\int_{\R^2}\left(e^{u_n^2}-1\right)u_n^2\mathrm{d}x\le K_0
		\end{equation*}
		for some constant $K_0>0$. Then
		\begin{enumerate}[{\rm (i)}]
			\item $\lim_{n\to\infty}\int_{\R^2}\left(e^{u_n^2}-1\right)u_n\phi\mathrm{d}x
			=\int_{\R^2}\left(e^{\hat{u}^2}-1\right)\hat{u}\phi\mathrm{d}x $ for all $\phi\in \mathcal{C}_{0}^{\infty}(\R^2)${\rm;}
			
			\item $\lim_{n\to\infty}\int_{\R^2}\left(e^{u_n^2}-1-u_n^2\right)\mathrm{d}x
			=\int_{\R^2}\left(e^{\hat{u}^2}-1-\hat{u}^2\right)\mathrm{d}x$.
		\end{enumerate}
	\end{lemma}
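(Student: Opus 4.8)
The plan is to prove both (i) and (ii) by the same truncation (Vitali--type) scheme, run once for the nonlinearity $f(t):=t(e^{t^2}-1)$ and once for $t\mapsto e^{t^2}-1-t^2$. I would first record the elementary facts that $0\le e^{t^2}-1-t^2\le\tfrac{t^4}{2}e^{t^2}$ for all $t\in\R$ and that
\[
\frac{e^{t^2}-1-t^2}{t^2(e^{t^2}-1)}\longrightarrow 0\quad\text{as }|t|\to\infty,
\]
so that for every $\eta>0$ there is $M_\eta>0$ with $e^{t^2}-1-t^2\le\eta\,t^2(e^{t^2}-1)$ whenever $|t|\ge M_\eta$. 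Since $u_n\rightharpoonup\hat u$ in $H^1_r(\R^2)$, the sequence $\{u_n\}$ is bounded in $H^1(\R^2)$, so, passing to a subsequence, $u_n\to\hat u$ a.e.\ in $\R^2$; by Fatou's lemma and the hypothesis $\int_{\R^2}(e^{\hat u^2}-1)\hat u^2\,\mathrm{d}x\le K_0$, hence $(e^{\hat u^2}-1)\hat u$ and $e^{\hat u^2}-1-\hat u^2$ lie in $L^1(\R^2)$. I would carry out all the estimates below along this subsequence; since the limiting integrals depend only on $\hat u$, a routine subsequence argument then upgrades the conclusions to the full sequence.

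For (i), fix $\phi\in\mathcal{C}_0^\infty(\R^2)$ with $\mathrm{supp}\,\phi\subset B_R$. Given $\eta>0$, choose $M>0$ with $|\{x:|\hat u(x)|=M\}|=0$ and split $B_R$ into $B_R\cap\{|u_n|\le M\}$ and $B_R\cap\{|u_n|>M\}$. On the first piece $|f(u_n)\phi|\le M(e^{M^2}-1)\|\phi\|_\infty\,\chi_{B_R}\in L^1(\R^2)$ and $f(u_n)\phi\to f(\hat u)\phi$ a.e., so dominated convergence gives $\int_{B_R\cap\{|u_n|\le M\}}f(u_n)\phi\,\mathrm{d}x\to\int_{B_R\cap\{|\hat u|\le M\}}f(\hat u)\phi\,\mathrm{d}x$. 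On the second piece $|f(u_n)|=u_n^2(e^{u_n^2}-1)/|u_n|\le M^{-1}u_n^2(e^{u_n^2}-1)$, whence $\int_{B_R\cap\{|u_n|>M\}}|f(u_n)\phi|\,\mathrm{d}x\le\|\phi\|_\infty K_0/M$, and the same bound holds with $\hat u$ in place of $u_n$ by the Fatou estimate above. Letting $n\to\infty$ and then $M\to\infty$ yields (i).

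For (ii) the same scheme is combined with tightness at spatial infinity. By Lemma \ref{lem 2.2}, $|u_n(x)|^2\le C|x|^{-1}$ for $|x|\ge1$, with $C$ controlled by the (uniform) bounds for $\|u_n\|_2$ and $\|\nabla u_n\|_2$; hence, using $e^{t^2}-1-t^2\le\tfrac{t^4}{2}e^{t^2}$, for $R\ge1$ large enough $\int_{B_R^c}(e^{u_n^2}-1-u_n^2)\,\mathrm{d}x\le\tfrac{e}{2}\big(\sup_{B_R^c}u_n^2\big)\|u_n\|_2^2\le C/R$ uniformly in $n$, and likewise for $\hat u$. Now, given $\eta>0$: pick $R$ so that these tails are $<\eta$; pick $M\ge M_\eta$ with $|\{x:|\hat u(x)|=M\}|=0$, so that the asymptotic inequality and the hypothesis give $\int_{\{|u_n|>M\}}(e^{u_n^2}-1-u_n^2)\,\mathrm{d}x\le\eta K_0$ and, via Fatou, $\int_{\{|\hat u|>M\}}(e^{\hat u^2}-1-\hat u^2)\,\mathrm{d}x\le\eta K_0$; finally, on $B_R\cap\{|u_n|\le M\}$ the integrand is dominated by $e^{M^2}\chi_{B_R}\in L^1(\R^2)$ and converges a.e., so dominated convergence applies there. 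Splitting $\int_{\R^2}=\int_{B_R\cap\{|u_n|\le M\}}+\int_{B_R\cap\{|u_n|>M\}}+\int_{B_R^c}$ (and similarly for $\hat u$), these estimates give $\limsup_{n\to\infty}\big|\int_{\R^2}(e^{u_n^2}-1-u_n^2)\,\mathrm{d}x-\int_{\R^2}(e^{\hat u^2}-1-\hat u^2)\,\mathrm{d}x\big|\le C\eta$; since $\eta>0$ is arbitrary, (ii) follows.

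I expect the only genuine difficulty to be the region where $|u_n|$ is large, that is, ruling out concentration of the exponential nonlinearity; this is exactly where the hypothesis $\int_{\R^2}(e^{u_n^2}-1)u_n^2\,\mathrm{d}x\le K_0$ enters, through the spare factor $|u_n|^{-1}\le M^{-1}$ in (i) and through the vanishing ratio $(e^{t^2}-1-t^2)/(t^2(e^{t^2}-1))\to0$ in (ii). The complementary point in (ii), controlling the spatial tail at $|x|=\infty$, is dispatched by the radial decay estimate of Lemma \ref{lem 2.2}; everything else is the dominated convergence theorem together with the usual subsequence argument.
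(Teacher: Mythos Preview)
Your argument is correct. The paper's own proof of this lemma consists entirely of two citations: part (i) is attributed to \cite[Lemma~2.1]{dMR1} (de~Figueiredo--Miyagaki--Ruf) and part (ii) to \cite[Assertion~2]{CTjde1}. What you have written is precisely the standard truncation/Vitali scheme that underlies those cited results, so in substance the approaches coincide; you have simply unpacked the references into a self-contained proof. Your use of the radial decay estimate (Lemma~\ref{lem 2.2}) to dispatch the spatial tail in (ii) is a natural choice in the $H^1_r$ setting and is consistent with the tools already available in the paper.
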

	
	\begin{proof}  (i) is a direct consequence of \cite[Lemma 2.1]{{dMR1}}. (ii) follows from \cite[Assertion 2]{CTjde1}.
	\end{proof}

	\begin{proof} [Proof of  Theorem {\rm\ref{thm 1.2}}] \  Using Lemmas \ref{lem 3.9} and
		\ref{lem 3.14}, we can derive that for any $c>0$, there exists a nonnegative sequence $\{u_n\}\subset \mathcal{S}_c$ such that
		\begin{equation}\label{JCi}
			\Phi(u_n)\rightarrow M(c)\in (0, m(c)+2\pi), \ \   \Phi|_{\mathcal{S}_c}'(u_n) \rightarrow 0\ \
			\mbox{and}\ \ \mathcal{P}(u_n)\rightarrow 0.
		\end{equation}
From {\rm(G1)}, {\rm(G2)}, \eqref{fai}, \eqref{Pu}, \eqref{GNs}, \eqref{NB0} and \eqref{JCi}, we have
		\begin{align}\label{TM25}
			&M(c)+o(1)\nonumber\\
			&  =  \Phi(u_n)-  \frac{1}{4}\mathcal{P}(u_n)\nonumber\\
			&  =   \frac{1}{4}\|\nabla u_n\|_2^2+\frac{1}{4}B(u_n)
			+\frac{1}{4}\int_{\R^2}\left[\left(u_n^2-3\right)e^{u_n^2}+3+2u_n^2\right]\mathrm{d}x-\frac{1}{4}\int_{\R^2}\left[5g(x)+\nabla g(x)\cdot x\right]u_n\mathrm{d}x\nonumber\\
			&  =   \frac{1}{4}\|\nabla u_n\|_2^2+\frac{1}{4}B(u_n)-\frac{1}{8}\|u_n\|_4^4
			+\frac{1}{4}\sum_{k=4}^{\infty}\frac{k-3}{k!}\int_{\R^2}u_n^{2k}\mathrm{d}x-\frac{1}{4}\int_{\R^2}\left[5g(x)+\nabla g(x)\cdot x\right]u_n\mathrm{d}x\nonumber\\
			& \ge  \frac{1}{4}\left(1-\frac{c}{4\pi}\right)\|\nabla u_n\|_2^2
			-\frac{5}{4}\|g(x)\|_{\frac{4}{3}}\|u_n\|_{4}-\frac{1}{4}\|\nabla g(x) \cdot x\|_{\frac{4}{3}}\|u_n\|_4\nonumber\\
& \ge \frac{1}{4}\left(1-\frac{c}{4\pi}\right)\|\nabla u_n\|_2^2
			-\frac{1}{4}c^{\frac{1}{4}}\mathcal{C}_4\left[ 5\|g(x)\|_{\frac{4}{3}}+ \|\nabla g(x) \cdot x\|_{\frac{4}{3}}\right]\|\nabla u_n\|_2^{\frac{1}{2}}
		\end{align}
		which, together with $0<c<c_0\le 2\pi$, implies $\{u_n\}$ is bounded in $H_{r}^1(\R^2)$.  Note that \eqref{JCi} and Lemma \ref{lem 2.16} give that
		\begin{equation}\label{TM29}
		\Phi'(u_n)+\lambda_nu_n\rightarrow 0,
		\end{equation}
		where $\lambda_n$ is defined by \eqref{G430}. It is easy to see that $\{|\lambda_n|\}$ is
		bounded.
		Thus, we may thus assume,  passing to a subsequence if necessary, that
		\begin{align}\label{UN}
 \left\{
\begin{aligned}
&\lambda_n \to \hat{\lambda}_c, \ \|\nabla u_n\|_2^2 \to \hat{A}^2; \\
&u_n \rightharpoonup \hat{u}, \  in  \ H^1_{r}(\mathbb{R}^2); \\
&u_n \to \hat{u}, \ in \   L^s(\mathbb{R}^2), \forall s >2; \\
&u_n \to \hat{u}, \ a.e. \ on \  \mathbb{R}^2.
\end{aligned}
\right.
\end{align}	
		Using {\rm(G1)}, {\rm(G2)}, \eqref{fai}, \eqref{Pu} and \eqref{JCi} again, we have
		\begin{align}\label{TM260}
			& M(c)+o(1)\nonumber\\
			&  =  \Phi(u_n)-  \frac{1}{2}\mathcal{P}(u_n)\nonumber\\
			&  =   \frac{1}{2}\int_{\R^2}\left[\left(u_n^2-1\right)e^{u_n^2}+1-\left(e^{u_n^2}-1-u_n^2\right)\right]\mathrm{d}x -\frac{1}{2}\int_{\R^2}\left[3g(x)+  \nabla g(x) \cdot x\right]   u_n\mathrm{d}x \nonumber\\
			&  \ge \frac{1}{6} \sum_{k=2}^{\infty}\frac{1}{k!}\int_{\R^2}u_n^{2(k+1)}\mathrm{d}x-\frac{3}{2}\|g(x)\|_{\frac{4}{3}}\|u_n\|_{4}-\frac{1}{2}\|\nabla g(x) \cdot x\|_{\frac{4}{3}}\|u_n\|_4\nonumber\\
			&  =   \frac{1}{6} \int_{\R^2}\left(e^{u_n^2}-1\right)u_n^2\mathrm{d}x- \frac{1}{6}\int_{\R^2}u_n^4\mathrm{d}x
			-\frac{3}{2}\|g(x)\|_{\frac{4}{3}}\|u_n\|_{4}-\frac{1}{2}\|\nabla g(x) \cdot x \|_{\frac{4}{3}}\|u_n\|_4,
		\end{align}
		which, together with the boundedness of $\{\|u_n\|\}$, yields
		\begin{align}\label{TM27}
			\int_{\R^2}\left(e^{u_n^2}-1\right)u_n^2\mathrm{d}x\le C_1
		\end{align}
		for some $C_1>0$.  By \eqref{TM27} and Lemma \ref{lem 3.15}, we have
		\begin{equation}\label{M32}
			\lim_{n\to\infty}\int_{\R^2}\left(e^{u_n^2}-1-u_n^2\right)u_n\phi\mathrm{d}x
			= \int_{\R^2}\left(e^{\hat{u}^2}-1-\hat{u}^2\right)\hat{u}\phi\mathrm{d}x,
			\ \ \ \ \forall \ \phi\in \mathcal{C}_0^{\infty}(\R^2)
		\end{equation}
		and
		\begin{equation}\label{M36}
			\lim_{n\to\infty}\int_{\R^2}\left(e^{u_n^2}-1-u_n^2\right)\mathrm{d}x
			=\int_{\R^2}\left(e^{\hat{u}^2}-1-\hat{u}^2\right)\mathrm{d}x.
		\end{equation}
		Since $\lambda_n\rightarrow \hat{\lambda}_c$, it follows from \eqref{UN}, \eqref{TM29} and \eqref{M32}  that
		\begin{equation}\label{M34}
			\Phi'(\hat{u})+\hat{\la}_c\hat{u}=0.
		\end{equation}
		To prove that $\hat{u}\in H^1_r(\R^2)$ is a solution to \eqref{Pa1},
		it suffices to show that $\|\hat{u}\|_2^2=c$  by \eqref{M34}. For this, we prove below three assertions in turn.
		
		\vskip2mm
		{\bf Assertion 1.} $\hat{u}\ne 0$.
		
		\vskip2mm
		Otherwise, we suppose that $u_n\rightharpoonup 0$ in $H_{r}^1(\R^2)$.
		By {\rm(G1)}, \eqref{JCi}, \eqref{UN} and \eqref{M36}, we can deduce that
		\begin{equation*}
			\|\nabla u_n\|^2
			\le  \|\nabla u_n\|^2+B(u_n)=   2\Phi(u_n)+\int_{\R^2}\left(e^{u_n^2}-1-u_n^2\right)\mathrm{d}x +2\int_{\R^2}g(x)u_n\mathrm{d}x =   2M(c)+o(1),
		\end{equation*}
		which, together with Lemma \ref{lem 3.14}, implies
		\begin{align}\label{M38}
			4\pi(1-3\bar{\varepsilon}) := \liminf_{n\to \infty}\|\nabla u_n\|^2\le 2M(c) < 2m(c)+4\pi
		\end{align}
		for some constant $\bar{\varepsilon}>0$. Let $q=(1-\bar{\varepsilon})/(1-3\bar{\varepsilon})$. Then
		$q>1$ and $q/(q-1)>1$.
		Note that \eqref{M38} and ii) of Lemma \ref{lem 2.3} yield that
		\begin{align}\label{M42}
			\int_{\R^2}\left(e^{u_n^2}-1\right)^q\mathrm{d}x
			& \le \int_{\R^2}\left(e^{q u_n^2}-1\right)\mathrm{d}x\le C_{2}.
		\end{align}
		Taking into account that $q/(q-1)>1$ and $u_n\to 0$ in $L^s(\R^2)$ for $s>2$, by \eqref{M42} and
		the H\"{o}lder inequality, we have
		\begin{equation}\label{M44}
			\int_{\R^2}\left(e^{u_n^2}-1\right)u_n^2\mathrm{d}x
			\le \left[\int_{\R^2}\left(e^{u_n^2}-1\right)^q\mathrm{d}x\right]^{1/q}\|u_n\|_{2q/(q-1)}^2
			=o(1).
		\end{equation}
		Then it follows from {\rm(G1)}, {\rm(G2)}, \eqref{TM260}, \eqref{M36} and \eqref{M44} that
		\begin{align}\label{Th8}
			&M(c)+o(1)\nonumber\\
&=\Phi(u_n)-  \frac{1}{2}\mathcal{P}(u_n)\nonumber\\
			&   = \frac{1}{2}\int_{\R^2}\left[\left(e^{u_n^2}-1\right)u_n^2
			-2\left(e^{u_n^2}-1-u_n^2\right)\right]\mathrm{d}x-\frac{1}{2}\int_{\R^2}\left[3g(x)+  \nabla g(x) \cdot x\right]   u_n\mathrm{d}x\nonumber\\
			&   =   o(1),
		\end{align}
		which contradicts with the fact that $M(c)>0$ for any $c\in (0,c_0)$. This contradiction shows that $\hat{u}\ne 0$.
		
		\vskip2mm
		{\bf Assertion 2.} $\int_{\R^2}\left(e^{u_n^2}-1\right)u_n(u_n-\hat{u})\mathrm{d}x=o(1)$.
		
		\smallskip
		Since $\hat{u}\ne 0$, it follows \eqref{M34} and Lemma \ref{Lep} that $\mathcal{P}(\hat{u})=0$. Arguing as in the proof of \eqref{G46} and \eqref{lamd}, we have $\hat{\la}_c > 0$ and  further derive that $\|u_n\|_2^2=\|\hat{u}\|_2^2$.
		It follows from \eqref{fai}, \eqref{JCi}, \eqref{UN}, \eqref{M36}, Lemma \ref{lem2.1} and {\rm(G1)} that
\begin{align}\label{fm11}
	&M(c)+o(1)=\Phi(u_n)\nonumber\\
		  &=   \frac{1}{2}\|\nabla u_n\|_2^2+ \frac{1}{2}B(u_n)
			-\frac{1}{2}\int_{\R^2}\left(e^{u_n^2}-1-u_n^2\right)\mathrm{d}x-\int_{\R^2}g(x)u_n\mathrm{d}x\nonumber\\
			&  =   \frac{1}{2}\|\nabla u_n\|_2^2+\|\nabla \hat{u}\|_2^2+ \frac{1}{2}B(\hat{u})
			-\frac{1}{2}\int_{\R^2}\left(e^{\hat{u}^2}-1-\hat{u}^2\right)\mathrm{d}x-\int_{\R^2}g(x)\hat{u}\mathrm{d}x+o(1)\nonumber\\
			&  =   \frac{1}{2}\|\nabla u_n\|_2^2-\frac{1}{2}\|\nabla \hat{u}\|_2^2+\Phi(\hat{u})+o(1)\nonumber\\
&  =   \frac{1}{2}\hat{A}^2-\frac{1}{2}\|\nabla \hat{u}\|_2^2+\Phi(\hat{u}).
	\end{align}
According to \eqref{fm11}, Lemmas \ref{lem 3.3} and \ref{lem 3.8},  one can easily that $\Phi(\hat{u})\ge m(c)$. Therefore
	\begin{align}\label{Th11}
	&M(c)+o(1)  =   \frac{1}{2}\|\nabla (u_n- \hat{u})\|_2^2+\Phi(\hat{u})+o(1)\ge  \frac{1}{2}\|\nabla (u_n- \hat{u})\|_2^2+m(c)+o(1).
	\end{align}
		Since $0<M(c)<m(c)+2\pi$ for any $c\in (0,c_0)$, we know from \eqref{Th11}  that
		there exists $\bar{\varepsilon}>0$ such that
		\begin{equation}\label{Th12}
			\|\nabla (u_n- \hat{u})\|_2^2\le 4\pi(1-3\bar{\varepsilon}), \ \ \ \ \mbox{for large } n\in \N.
		\end{equation}
		Choose $q\in (1,2)$ such that $q^2(1-3\bar{\varepsilon})\le (1-\bar{\varepsilon})$. Then
		by \eqref{Th12}, the Young's inequality and Lemma \ref{lem 2.3} ii), we can deduce that
		\begin{align}\label{Th14}
			&      \int_{\R^2}\left(e^{u_n^2}-1\right)^q\mathrm{d}x \le  \int_{\R^2}\left(e^{q u_n^2}-1\right)\mathrm{d}x\nonumber\\
			& \le  \int_{\R^2}\left[e^{(1+\bar{\varepsilon}^{-1})q \hat{u}^2}
			e^{(1+\bar{\varepsilon})q (u_n-\hat{u})^2}-1\right]\mathrm{d}x\nonumber\\
			& \le  \frac{q-1}{q}\int_{\R^2}\left[e^{(1+\bar{\varepsilon}^{-1})
				q^2(q-1)^{-1} \hat{u}^2}-1\right]\mathrm{d}x+\frac{1}{q}\int_{\R^2}\left[e^{(1+\bar{\varepsilon})q^2 (u_n-\hat{u})^2}-1\right]
			\mathrm{d}x \le  C_3.
		\end{align}
		Noting that $q/(q-1)>1$, by \eqref{UN}, \eqref{Th14} and the H\"{o}lder inequality, we have
		\begin{align}\label{Th15}
			&       \int_{\R^2}\left(e^{u_n^2}-1\right)u_n(u_n-\hat{u})\mathrm{d}x\nonumber\\
			& \le   \left[\int_{\R^2}\left(e^{u_n^2}-1\right)^q\mathrm{d}x\right]^{1/q}\|u_n\|_{2q/(q-1)}
			\|u_n-\hat{u}\|_{2q/(q-1)} = o(1).
		\end{align}
		Hence, Assertion 2 is proved.
		
		\smallskip
		{\bf Assertion 3.} $u_n\to \hat{u}$ in $H_{r}^1(\R^2)$.
		
		\smallskip
It follows from {\rm(G1)}, \eqref{G47}, \eqref{UN}, \eqref{TM29}, \eqref{M34} and Assertion 2
		that
		\begin{align*}
			o(1)
			& =   \left\langle  \Phi'({u}_n)+\la_n {u}_n,{u}_n-\hat{u}\right\rangle  \nonumber\\
			&   =   \|\nabla({u}_n-\bar{u})\|_2^2+\hat{\la}_c\|{u}_n-\hat{u}\|_2^2+o(1),
		\end{align*}
		which implies that $u_n\to \hat{u}$ in $H_{r}^1(\R^2)$. Thus, Assertion 3 holds.  This completes the proof of Theorem \ref{thm 1.2}.
	\end{proof}

\section*{Acknowledgments}
This work is partially supported by the National Natural Science Foundation of China (No: 12371181, No: 12471175),
Hunan Provincial Natural Science Foundation
(No: 2022JJ20048, No: 2021JJ40703).


\begin{thebibliography}{10}
\expandafter\ifx\csname url\endcsname\relax
  \def\url#1{\texttt{#1}}\fi
\expandafter\ifx\csname urlprefix\endcsname\relax\def\urlprefix{URL }\fi

\bibitem{AT}
S.~Adachi, K.~Tanaka, Trudinger type inequalities in {$\mathbb R^N$} and their
  best exponents, Proc. Amer. Math. Soc. 128 (2000) 2051--2057.

\bibitem{AY}
	Adimurthi, S.~L. Yadava, Multiplicity results for semilinear elliptic equations
	in a bounded domain of {$\R^2$} involving critical exponents, Ann.
	Scuola Norm. Sup. Pisa Cl. Sci. (4) 17 (1990) 481--504.

\bibitem{BC}
H.~Berestycki, T.~Cazenave, Instabilit\'{e} des \'{e}tats stationnaires dans
  les \'{e}quations de {S}chr\"{o}dinger et de {K}lein-{G}ordon non
  lin\'{e}aires, C. R. Acad. Sci. Paris S\'{e}r. I Math. 293 (1981) 489--492.

\bibitem{BL2}
H.~Berestycki, P.-L. Lions, Nonlinear scalar field equations {II}. {E}xistence
  of infinitely many solutions, Arch. Rational Mech. Anal. 82 (1983) 347--375.

\bibitem{BHS}
J.~Byeon, H.~Huh, J.~Seok, Standing waves of nonlinear {S}chr\"{o}dinger
  equations with the gauge field, J. Funct. Anal. 263 (2012) 1575--1608.

\bibitem{Cao}
D.~M. Cao, Nontrivial solution of semilinear elliptic equation with critical
  exponent in {$\mathbb R^2$}, Comm. Partial Differential Equations 17 (1992)
  407--435.

\bibitem{CST}
D.~Cassani, F.~Sani, C.~Tarsi, Equivalent {M}oser type inequalities in
  {$\Bbb{R}^2$} and the zero mass case, J. Funct. Anal. 267 (2014) 4236--4263.

\bibitem{CL1}
T.~Cazenave, P.-L. Lions, Orbital stability of standing waves for some
  nonlinear {S}chr\"{o}dinger equations, Comm. Math. Phys. 85 (1982) 549--561.

\bibitem{CX-AASFM}
H.~B.~Chen, W.~H.~Xie, Existence and multiplicity of normalized solutions for the
  nonlinear {C}hern-{S}imons-{S}chr\"{o}dinger equations, Ann. Acad. Sci. Fenn.
  Math. 45~(1) (2020) 429--449.
\bibitem{ctmz}
S.~T. Chen, V.~D.~R\u{a}dulescu, X.~H. Tang, Multiple normalized solutions for the planar Schr\"odinger-Poisson system with critical exponential growth, Math. Z. 306 (2024) Paper No. 50.

\bibitem{CTS}
S.~T. Chen, X.~H. Tang, Another look at {S}chr\"{o}dinger equations with prescribed
  mass, J. Differential Equations 386 (2024) 435--479.

\bibitem{CTjde1}
S.~T. Chen, X.~H. Tang, Axially symmetric solutions for the planar
  schr\"odinger-poisson system with critical exponential growth, J.
  Differential Equations 269 (2020) 9144--9174.

\bibitem{ctcv}
S.~T. Chen, X.~H. Tang, Normalized solutions for Schr¨odinger equations with mixed dispersion and
critical exponential growth in {$\mathbb R^N$}, Calc. Var. Partial Differential Equations 62 (2023) Paper No.261, 37.

\bibitem{CTY}
S.~T. Chen, X.~H. Tang, S.~Yuan, On the {C}hern-{S}imons-{S}chr\"{o}dinger
  equation with critical exponential growth, Acta Math. Sin. (Engl. Ser.) 37
  (2021) 1875--1895.
\bibitem{ZWM}
Z.~Chen, W.~M. Zou, Existence of normalized positive solutions for a class of
nonhomogeneous elliptic equations, J.~Geom. Anal. 33 (2023)
		Paper No. 147.


\bibitem{dP1}
P.~d'Avenia, A.~Pomponio, Standing waves for a
  {S}chr\"{o}dinger-{C}hern-{S}imons-{H}iggs system, J. Differential Equations
  268 (2020) 2151--2162.

\bibitem{dMR1}
D.~G. de~Figueiredo, O.~H. Miyagaki, B.~Ruf, Elliptic equations in {$\mathbb R^2$} with nonlinearities in the critical growth range, Calc. Var. Partial
  Differential Equations 3 (1995) 139--153.

\bibitem{DPS}
Y.~B.~Deng, S.~J.~Peng, W.~Shuai, Nodal standing waves for a gauged nonlinear
  {S}chr\"{o}dinger equation in {$\Bbb{R}^2$}, J. Differential Equations 264
  (2018) 4006--4035.


\bibitem{GaoTan}
L.~Gao, Z.~Tan, Existence of normalized solutions for the
Chern-Simons-{S}chr\"{o}dinger system with critical exponential
growth, J. Math. Anal. Appl. 540 (2024) 128652.

\bibitem{gtx}
T.~X. Gou, Z.~T. Zhang, Normalized solutions to the Chern-Simons-Schr\"odinger system, J. Funct. Anal.
280 (2021) 108894.


\bibitem{HHS}
J.~Han, H.~Huh, J.~Seok, Chern-{S}imons limit of the standing wave solutions
  for the {S}chr\"{o}dinger equations coupled with a neutral scalar field, J.
  Funct. Anal. 266 (2014) 318--342.

\bibitem{Huh}
H.~Huh, Standing waves of the {S}chr\"{o}dinger equation coupled with the
  {C}hern-{S}imons gauge field, J. Math. Phys. 53 (2012) 063702, 8.

\bibitem{HHJ}
H.~Huh, J.~Han, S.~Jin, Standing wave solutions to the
  {M}axwell-{C}hern-{S}imons-{S}chr\"{o}dinger equations, Calc. Var. Partial
  Differential Equations 62 (2023) Paper No. 57, 32.

\bibitem{jmaaH}
X.~J. Huang, S.~H. Feng,  J.~H. Chen, Normalized solutions for
Chern-Simons-{S}chr\"{o}dinger system with critical exponential
growth, J. Math. Anal. Appl. 540 (2024) 128685.

\bibitem{Ja1}
R.~Jackiw, S.-Y. Pi, Classical and quantal nonrelativistic {C}hern-{S}imons
  theory, Phys. Rev. D (3) 42~(10) (1990) 3500--3513.

\bibitem{Ja2}
R.~Jackiw, S.-Y. Pi, Self-dual {C}hern-{S}imons solitons, no. 107, 1992, pp.
  1--40.

	\bibitem{JL-MA}
		L.~Jeanjean, T.~T. Le, Multiple normalized solutions for a {S}obolev critical
		{S}chr\"{o}dinger equation, Math. Ann. 384 (2022) 101--134.


\bibitem{JF-JMAA}
C.~Ji, F.~Fang, Standing waves for the {C}hern-{S}imons-{S}chr\"{o}dinger
  equation with critical exponential growth, J. Math. Anal. Appl. 450 (2017)
  578--591.

\bibitem{LLuo}
G.~B.~Li, X.~Luo, Normalized solutions for the {C}hern-{S}imons-{S}chr\"{o}dinger
  equation in {$\mathbb R^2$}, Ann. Acad. Sci. Fenn. Math. 42 (2017) 405--428.


\bibitem{LLi}
Z.~Li, B.~Liu, On threshold solutions of the equivariant
  {C}hern-{S}imons-{S}chr\"{o}dinger equation, Ann. Inst. H. Poincar\'{e} C
  Anal. Non Lin\'{e}aire 39 (2022) 371--417.

\bibitem{LL}
E.~Lieb, M.~Loss, Analysis, vol.~14 of Graduate Studies in Mathematics, 2nd
  ed., American Mathematical Society, Providence, RI, 2001.


\bibitem{LOZ}
Z.~Liu, Z.~Ouyang, J.~Zhang, Existence and multiplicity of sign-changing
  standing waves for a gauged nonlinear {S}chr\"{o}dinger equation in {$\Bbb
  R^2$}, Nonlinearity 32 (2019) 3082--3111.


\bibitem{Mo}
J.~Moser, A sharp form of an inequality by {N}. {T}rudinger, Indiana Univ.
  Math. J. 20 (1970/71) 1077--1092.

\bibitem{PR2}
A.~Pomponio, D.~Ruiz, Boundary concentration of a gauged nonlinear
  {S}chr\"{o}dinger equation on large balls, Calc. Var. Partial Differential
  Equations 53 (2015) 289--316.

\bibitem{PR1}
		A.~Pomponio, D.~Ruiz, A variational analysis of a gauged nonlinear
		{S}chr\"{o}dinger equation, J. Eur. Math. Soc. (JEMS) 17 (2015) 1463--1486.


\bibitem{So-JDE}
N.~Soave, Normalized ground states for the {NLS} equation with combined
  nonlinearities, J. Differential Equations 269 (2020) 6941--6987.

  \bibitem{wei}
  C.~L.~Wei, L.~X.~Wen, Normalized solutions for Chern-Simons-Schr\"odinger system with
mixed dispersion and critical exponential growth, Math. Meth. Appl. Sci.  48 (2025) 1256-1280.

\bibitem{We}
M.~I. Weinstein, Nonlinear {S}chr\"{o}dinger equations and sharp interpolation
  estimates, Comm. Math. Phys. 87 (1982/83) 567--576.

\bibitem{YCS}
S.~Yao, H.~B.~Chen, J.~T.~Sun, Normalized solutions to the
  {C}hern-{S}imons-{S}chr\"{o}dinger system under the nonlinear combined
  effect, Sci. China Math. 66 (2023) 2057--2080.

\bibitem{YCS-JGA}
S.~Yao, H.~B.~Chen, J.~T.~Sun, Two normalized solutions for the
  {C}hern-{S}imons-{S}chr\"{o}dinger system with exponential critical growth,
  J. Geom. Anal. 33 (2023) Paper No. 91, 26.

\bibitem{Yuan}
J.~J.~Yuan, Multiple normalized solutions of {C}hern-{S}imons-{S}chr\"{o}dinger
  system, NoDEA Nonlinear Differential Equations Appl. 22 (2015) 1801--1816.


\bibitem{Zhs}
H.~Yuan, L.~F.~Weng, H.~S.~Zhou, Multiple solutions to the Chern-Simons-Schr\"odinger equation with perturbation
(in Chinese), Sci Sin Math. 53 (2023) 1213-1226.

\bibitem{Yuan1}
S.~Yuan, X.~H. Tang, S.~T. Chen, Normalized solutions of Chern-Simons-Schr\"odinger equations
with exponential critical growth, J. Math. Anal. Appl. 516 (2022) 126523.

\bibitem{ZTC}
		N.~Zhang, X.~H. Tang, S.~T.~Chen, Mountain-{p}ass {t}ype {s}olutions for the
		{C}hern--{S}imons--{S}chr\"{o}dinger {e}quation with {z}ero {m}ass
		{p}otential and {c}ritical {e}xponential {g}rowth, J.~Geom. Anal. 33 (2023)
		Paper No. 12.
\end{thebibliography}
\end{document}